\theoremstyle{plain}
\newtheorem{theorem}{Theorem}[section]
\newtheorem{corollary}[theorem]{Corollary}
\newtheorem{lemma}[theorem]{Lemma}
\newtheorem*{lemma:covexonecomp}{Lemma \ref{lem:covexonecomp}}
\newtheorem*{lemma:covexfibub}{Lemma \ref{lem:covexfibub}}
\newtheorem*{lemma:covexfiblb}{Lemma \ref{lem:covexfiblb}}
\newtheorem*{lemma:ressm}{Lemma \ref{lem:ressm}}
\newtheorem*{lemma:fibgeom}{Lemma \ref{lem:fibgeom}}
\newtheorem*{lemma:fiblb}{Lemma \ref{lem:fiblb}}
\newtheorem*{lemma:fibub}{Lemma \ref{lem:fibub}}
\newtheorem*{lemma:excgeom}{Lemma \ref{lem:excgeom}}
\newtheorem*{lemma:exccomb}{Lemma \ref{lem:exccomb}}
\theoremstyle{definition}
\newtheorem{conjecture}[theorem]{Conjecture}
\newtheorem{remark}[theorem]{Remark}
\numberwithin{equation}{section}
\newcommand{\Sym}{\ensuremath{S}}
\newcommand{\KL}[1]{\ensuremath{KL_{#1}}}
\newcommand{\MS}[1]{\ensuremath{MS_{#1}}}
\DeclareMathOperator{\fl}{fl}
\DeclareMathOperator{\ms}{maxsing}
\begin{document}

\title{Permutations with Kazhdan-Lusztig polynomial $P_{id,w}(q)=1+q^h$}
\subjclass[2000]{14M15; 05E15, 20F55}
\author[Alexander Woo]{Alexander Woo \\
(Appendix by Sara Billey and Jonathan Weed)}
\address{Mathematics, Statistics, and Computer Science, Saint Olaf College, 1520 Saint Olaf Ave., Northfield, MN 55057}
\email{woo@stolaf.edu}

\thanks{AW gratefully acknowledges support from NSF VIGRE grant DMS-0135345.
SB gratefully acknowledges support from NSF grant DMS-0800978.
JW gratefully acknowledges support from NSF REU grant DMS-0754486.}

\date{\today}
\begin{abstract}

Using resolutions of singularities introduced by Cortez and a method
for calculating Kazhdan-Lusztig polynomials due to Polo, we prove the
conjecture of Billey and Braden characterizing permutations $w$ with
Kazhdan-Lusztig polynomial $P_{id,w}(q)=1+q^h$ for some $h$.

\end{abstract}
\maketitle

\tableofcontents

\section{Introduction}

Kazhdan-Lusztig polynomials are polynomials $P_{u,w}(q)$ in one
variable associated to each pair of elements $u$ and $w$ in the
symmetric group $S_n$ (or more generally in any Coxeter group).  They
have an elementary definition in terms of the Hecke
algebra~\cite{KazLus, Hum, BjoBre} and numerous applications in
representation theory, most notably in~\cite{KazLus, BeilBern,
BryKash}, and the geometry of homogeneous spaces~\cite{KazLusIH,
Deoratsm}.  While their definition makes it fairly easy to compute any
particular Kazhdan-Lusztig polynomial, on the whole they are poorly
understood.  General closed formulas are known~\cite{BilBre, Bre}, but
they are fairly complicated; furthermore, although they are known to
be positive (for $S_n$ and other Weyl groups), these formulas have
negative signs.  For $S_n$, positive formulas are known only for $3412$
avoiding permutations~\cite{Las, LasSchu}, $321$-hexagon avoiding
permutations~\cite{BilWar321}, and some isolated cases related to the
generic singularities of Schubert
varieties~\cite{BilWar,Man2,Cor2,War}.

One important interpretation of Kazhdan-Lusztig polynomials is as
local intersection homology Poincar\'e polynomials for Schubert
varieties.  This interpretation, originally established by Kazhdan and
Lusztig~\cite{KazLusIH}, shows, in an entirely non-constructive
manner, that Kazhdan-Lusztig polynomials have nonnegative integer
coefficients and constant term 1.  Furthermore, as shown by
Deodhar~\cite{Deoratsm}, $P_{id,w}(q)=1$ (for $S_n$) if and only if
the Schubert variety $X_w$ is smooth, and, more generally,
$P_{u,w}(q)=1$ if and only if $X_w$ is smooth over the Schubert cell
$X^\circ_u$.

The purpose of this paper is to prove the following theorem.

\begin{theorem}
\label{thm:main}
Suppose the singular locus of $X_w$ has exactly one irreducible
component, and $w$ avoids the patterns $653421$, $632541$, $463152$,
$526413$, $546213$, and $465132$.  Then $P_{id,w}(1)=2$.

More precisely, when the hypotheses are satisfied, $P_{id,w}(q)=1+q^h$
where $h$ is the minimum height of a $3412$ embedding, with $h=1$ if
no such embedding exists.
\end{theorem}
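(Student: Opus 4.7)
The plan is to divide the argument into two regimes based on whether $w$ contains a $3412$ pattern. When $w$ is $3412$-avoiding, the positive formula of Lascoux--Sch\"utzenberger expresses $P_{id,w}(q)$ as a sum over combinatorial objects attached to the essential set (or Lascoux--Sch\"utzenberger tree) of $w$; the hypothesis that the singular locus has exactly one irreducible component should collapse this sum to $1 + q$, matching the convention $h = 1$ in the theorem. This case ought to follow with comparatively little work from known formulas.

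The substantial content is the $3412$-containing case, where I would construct, along the lines of Cortez, a resolution $\pi : Z \to X_w$ built from a specific choice of $3412$ embedding realizing the minimum height $h$. Polo's method then identifies $P_{id,w}(q)$ (or at least its coefficients) with the cohomology of the fiber $\pi^{-1}(e)$ over the identity, provided the resolution is controlled enough at $e$ to make the pushforward $R\pi_*\mathbb{Q}_Z$ compute local intersection cohomology there. The key combinatorial claim I would try to establish is that, under the pattern-avoidance hypotheses, the relevant fiber is isomorphic to the projective space $\mathbb{P}^h$, so that its Poincar\'e polynomial is $1 + q + q^2 + \cdots + q^h$; then the one-component hypothesis, together with a result bounding the coefficients of $P_{id,w}$ by the number of irreducible components of the singular locus, rules out the intermediate terms and forces the polynomial to be $1 + q^h$.

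I would first verify that the height $h$, defined as the minimum over all $3412$ embeddings $i_1 < i_2 < i_3 < i_4$ of some natural statistic (for instance $w(i_2) - w(i_4)$ or the analogous quantity after suitable flattening), is invariant enough to be read off the Cortez resolution as the dimension of the central projective fiber. One should then check that replacing the chosen embedding by another minimum-height embedding gives an isomorphic fiber, so that the construction is canonical up to isomorphism.

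The main obstacle is showing that the six forbidden patterns $653421$, $632541$, $463152$, $526413$, $546213$, $465132$ are \emph{exactly} the obstructions to this picture. Heuristically, these are configurations in which either (i) Cortez's fiber acquires extra irreducible components or cohomology beyond the expected $\mathbb{P}^h$, or (ii) the interaction of several minimum-height $3412$ embeddings produces additional contributions to $P_{id,w}$, or (iii) the singular locus secretly acquires more components than the hypothesis allows. Identifying these patterns as the minimal bad configurations and dispatching each one will likely require a detailed combinatorial case analysis, drawing on the explicit classification of generic singularities of Schubert varieties due to Cortez, Billey--Warrington, Manivel, and others cited in the introduction. I expect this verification, together with confirming that Polo's method applies uniformly once the forbidden patterns are excluded, to be the most delicate part of the argument.
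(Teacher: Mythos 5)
Your high-level split into the $3412$-avoiding and $3412$-containing cases matches the paper, and the idea of using a Cortez-type resolution together with Polo's interpretation of the Decomposition Theorem is also the paper's strategy. In the covexillary case you propose to use the Lascoux--Sch\"utzenberger/Lascoux formula directly; the paper explicitly mentions this as a possible alternative but instead uses the Cortez--Zelevinsky resolution (and a dual variant) and Polo's method there too, for uniformity. That difference is harmless.

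There is, however, a genuine gap in the $3412$-containing case at precisely the crucial step. You say that once the fiber over $e_{id}$ is known to be $\mathbb{P}^h$, ``the one-component hypothesis, together with a result bounding the coefficients of $P_{id,w}$ by the number of irreducible components of the singular locus, rules out the intermediate terms and forces the polynomial to be $1+q^h$.'' No such result exists in the form you need, and the passage from $H_{id,\pi}(q) = 1 + q + \cdots + q^h$ to $P_{id,w}(q) = 1 + q^h$ is the hardest part of the argument. The Decomposition Theorem, in Polo's form, only yields
$$H_{id,\pi}(q) = P_{id,w}(q) + \sum_{v<w} q^{(\ell(w)-\ell(v))/2}\, E_v(q)\, P_{id,v}(q),$$
and one must actually determine the contributions $E_v(q)$. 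The paper does this by showing that the image of the exceptional locus of $\pi$ is a single Schubert variety $X_u$ with $\ell(w)-\ell(u)=h$, that the generic fiber over $X_u$ is $\mathbb{P}^{h-1}$, and then running a downward induction forcing $E_x(q)=0$ for all $x\not\le u$, $q^{h/2}E_u(q)=q+\cdots+q^{h-1}$, $P_{id,u}(q)=1$, and $E_x(q)=0$ for $x<u$. This analysis of the exceptional locus and the generic fiber over it is essential and is missing from your plan. Relatedly, you do not address the fact that Cortez's $Z$ is in general only a \emph{partial} resolution; a separate lemma (using the $463152$-avoidance hypothesis and the one-component hypothesis) is needed to show $Z$ is actually smooth before Polo's method applies. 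Finally, the worry about canonicity of the resolution across different minimum-height embeddings is unnecessary: the paper fixes one embedding (of minimum height, then minimum amplitude) and never needs the construction to be embedding-independent.
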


Here, a $3412$ embedding is a sequence of indices $i_1<i_2<i_3<i_4$
such that $w(i_3)<w(i_4)<w(i_1)<w(i_2)$, and its height is
$w(i_1)-w(i_4)$.  Given the first part of the theorem, the second part
can be immediately deduced from the unimodality of Kazhdan-Lusztig
polynomials~\cite{Irv,BraMac} and the calculation of the
Kazhdan-Lusztig polynomial at the unique generic
singularity~\cite{BilWar,Man2,Cor2}.  Indeed, unimodality and this
calculation imply the following corollary.

\begin{corollary}
\label{cor:main}
Suppose $w$ satisfies the hypotheses of Theorem~\ref{thm:main}.  Let
$X_v$ be the singular locus of $X_w$.  Then $P_{u,w}(q)=1+q^h$ (with
$h$ as in Theorem~\ref{thm:main}) if $u\leq v$ in Bruhat order, and
$P_{u,w}(q)=1$ otherwise.
\end{corollary}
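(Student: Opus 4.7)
The plan is to reduce the corollary to two ingredients already in place: Deodhar's characterization of the smooth locus of $X_w$ by $P_{u,w}=1$, and monotonicity of Kazhdan-Lusztig polynomials in the lower index. I would split into two cases according to whether $u\leq v$ in Bruhat order.

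In the case $u\not\leq v$, the Schubert cell $X^\circ_u$ is disjoint from $X_v$, by the standard fact that $X^\circ_u\cap X_v\neq\emptyset$ iff $u\leq v$. Since $X_v$ is by hypothesis the entire singular locus of $X_w$, the cell $X^\circ_u$ lies in the smooth locus of $X_w$, and Deodhar's theorem gives $P_{u,w}(q)=1$.

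In the case $u\leq v$, the strategy is to sandwich $P_{u,w}$ between two polynomials already known to equal $1+q^h$. Theorem~\ref{thm:main} provides $P_{id,w}(q)=1+q^h$. Since the singular locus has a unique irreducible component, the generic-singularity computation of Billey-Warrington, Manivel, and Cortez supplies $P_{v,w}(q)=1+q^h$ with the same exponent $h$, namely the minimum height of a $3412$ embedding. The monotonicity theorem of Irving and Braden-MacPherson (the ``unimodality'' alluded to above) states that the coefficients of $P_{u',w}$ are componentwise non-increasing as $u'$ moves up the Bruhat order from $id$ toward $w$. Applied to the chain $id\leq u\leq v\leq w$, this forces
\[
P_{v,w}(q)\leq P_{u,w}(q)\leq P_{id,w}(q)
\]
coefficientwise, and the sandwich pins $P_{u,w}(q)=1+q^h$.

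No substantial obstacle remains once Theorem~\ref{thm:main} is granted. The only point requiring care is verifying that the exponent $h$ occurring in the generic-singularity value $P_{v,w}(q)$ coincides with the exponent in $P_{id,w}(q)$, but under the stated pattern-avoidance hypotheses both are known to equal the minimum $3412$ height in $w$; so the corollary is genuinely a short squeeze argument driven by monotonicity.
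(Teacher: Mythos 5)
Your proof is correct and takes essentially the same route the paper sketches in the paragraph preceding the corollary: Deodhar's theorem handles the case $u\not\leq v$, while the Irving/Braden--MacPherson monotonicity (the paper's ``unimodality'') combined with the generic-singularity value $P_{v,w}(q)=1+q^h$ gives the case $u\leq v$ by a coefficientwise squeeze between $P_{id,w}$ and $P_{v,w}$.
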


The permutation $v$ and the singular locus in general has a
combinatorial description given in Theorem~\ref{thm:singlocus}, which
was originally proved independently in~\cite{BilWar, Cor2, KLR, Man}.

Theorem~\ref{thm:main} was conjectured by Billey and Braden
\cite{BilBra}.  They claim the converse in their paper.  An outline of
the proof is as follows.  If $P_{id,w}(1)=1$ then $X_w$ is
nonsingular~\cite{Deoratsm}.  The methods for calculating
Kazhdan-Lusztig polynomials due to Braden and MacPherson~\cite{BraMac}
show that $P_{id,w}(1)\leq 2$ implies that the singular locus of $X_w$
has at most one component.  That $P_{id,w}(1)\leq 2$ implies the
pattern avoidance conditions follows from \cite[Thm. 1]{BilBra} and
the computation of Kazhdan-Lusztig polynomials for the six pattern
permutations.

While this paper was being written, Billey and Weed found an
alternative formulation of Theorem~\ref{thm:main} purely in terms of
pattern avoidance, replacing the condition that the singular locus of
$X_w$ have only one component with sixty patterns.  They have
graciously agreed to allow their result, Theorem~\ref{thm:pats},
to be included in an appendix to this paper.
Theorem~\ref{thm:pats} also provides an alternate method for
proving the converse to Theorem~\ref{thm:main} using only
\cite[Thm. 1]{BilBra} and bypassing the methods of \cite{BraMac}.

To prove Theorem~\ref{thm:main}, we study resolutions of singularities
for Schubert varieties that were introduced by Cortez~\cite{Cor1,
Cor2} and use an interpretation of the Decomposition Theorem
\cite{BeilBernDel} given by Polo \cite{Polo} which allows computation
of Kazhdan-Lusztig polynomials $P_{v,w}$ (and more generally local
intersection homology Poincar\'e polynomials for appropriate
varieties) from information about the fibers of a resolution of
singularities.  In the $3412$-avoiding case, we use a resolution of
singularities from~\cite{Cor1} and a second resolution of
singularities which is closely related.  An alternative approach which
we do not take here would be to analyze the algorithm of
Lascoux~\cite{Las} for calculating these Kazhdan-Lusztig polynomials.
For permutations containing $3412$, we use one of the partial
resolutions introduced in~\cite{Cor2} for the purpose of determining
the singular locus of $X_w$.  Under the conditions described above,
this partial resolution is actually a resolution of singularities, and
we use Polo's methods on it.

Though we have used purely geometric arguments, it is possible to
combinatorialize the calculation of Kazhdan-Lusztig polynomials from
resolutions of singularities using a Bialynicki-Birula
decomposition~\cite{BialI,BialII,Car} of the resolution.  See
Remark~\ref{rem:comb-calc} for details.

Corollary~\ref{cor:main} suggests the problem of describing all pairs
$u$ and $w$ for which $P_{u,w}(1)=2$.  It seems possible to extend the
methods of this paper to characterize such pairs; presumably $X_u$
would need to lie in no more than one component of the singular locus
of $X_w$, and $[u,w]$ would need to avoid certain intervals (see
Section~\ref{sect:patavoid}).  Any further extension to characterize
$w$ for which $P_{id,w}(1)=3$ is likely to be extremely
combinatorially intricate.  An extension to other Weyl groups would
also be interesting, not only for its intrinsic value, but because
methods for proving such a result may suggest methods for proving any
(currently nonexistent) conjecture combinatorially describing the
singular loci of Schubert varieties for these other Weyl groups.

I wish to thank Eric Babson for encouraging conversations and Sara
Billey for helpful comments and suggestions on earlier drafts.  I used
Greg Warrington's software~\cite{WarKLPOL} for computing
Kazhdan-Lusztig polynomials in explorations leading to this work.

\section{Preliminaries}

\subsection{The symmetric group and Bruhat order}

We begin by setting notation and basic definitions.  We let $S_n$
denote the symmetric group on $n$ letters.  We let $s_i\in S_n$ denote
the adjacent transposition which switches $i$ and $i+1$; the elements
$s_i$ for $i=1,\ldots,n-1$ generate $S_n$.  Given an element $w\in
S_n$, its {\bf length}, denoted $\ell(w)$, is the minimal number of
generators such that $w$ can be written as $w=s_{i_1}s_{i_2}\cdots
s_{i_\ell}$.  An {\bf inversion} in $w$ is a pair of indices $i<j$
such that $w(i)>w(j)$.  The length of a permutation $w$ is equal to
the number of inversions it has.

Unless otherwise stated, permutations are written in one-line
notation, so that $w=3142$ is the permutation such that $w(1)=3$,
$w(2)=1$, $w(3)=4$, and $w(4)=2$.

Given a permutation $w\in S_n$, the {\bf graph} of $w$ is the set of
points $(i,w(i))$ for $i\in\{1,\ldots,n\}$.  We will draw graphs
according to the Cartesian convention, so that $(0,0)$ is at the
bottom left and $(n,0)$ the bottom right.

The {\bf rank function} $r_w$ is defined by
$$r_w(p,q)=\#\{i\mid 1\leq i\leq p, 1\leq w(i)\leq q\}$$ for any
$p,q\in \{1,\ldots,n\}$.  We can visualize $r_w(p,q)$ as the number of
points of the graph of $w$ in the rectangle defined by $(1,1)$ and
$(p,q)$.  There is a partial order on $S_n$, known as {\bf Bruhat
order}, which can be defined as the reverse of the natural partial
order on the rank function; explicitly, $u\leq w$ if $r_u(p,q)\geq
r_w(p,q)$ for all $p,q\in\{1,\ldots,n\}$.  The Bruhat order and the
length function are closely related.  If $u<w$, then
$\ell(u)<\ell(w)$; moreover, if $u<w$ and $j=\ell(w)-\ell(u)$, then
there exist (not necessarily adjacent) transpositions $t_1,\ldots,t_j$
such that $u=t_j\cdots t_1w$ and
$\ell(t_{i+1}\cdots t_1w)=\ell(t_i\cdots t_1w)-1$ for all $i$, $1\leq i<j$.

\subsection{Schubert varieties}

Now we briefly define Schubert varieties.  A {\bf (complete) flag}
$F_\bullet$ in $\mathbb{C}^n$ is a sequence of subspaces
$\{0\}\subseteq F_1\subset F_2 \subset \cdots \subset F_{n-1} \subset
F_n=\mathbb{C}^n$, with $\dim F_i = i$.  As a set, the {\bf flag
variety} $\mathcal{F}_n$ has one point for every flag in
$\mathbb{C}^n$.  The flag variety $\mathcal{F}_n$ has a geometric
structure as $GL(n)/B$, where $B$ is the group of invertible upper
triangular matrices, as follows.  Given a matrix $g\in GL(n)$, we can
associate to it the flag $F_\bullet$ with $F_i$ being the span of the
first $i$ columns of $g$.  Two matrices $g$ and $g^\prime$ represent
the same flag if and only if $g^\prime=gb$ for some $b\in B$, so
complete flags are in one-to-one correspondence with left $B$-cosets
of $GL(n)$.

Fix an ordered basis $e_1,\ldots,e_n$ for $\mathbb{C}^n$, and let
$E_\bullet$ be the flag where $E_i$ is the span of the first $i$ basis
vectors.  Given a permutation $w\in S_n$, the {\bf Schubert cell}
associated to $w$, denoted $X^\circ_w$, is the subset of
$\mathcal{F}_n$ corresponding to the set of flags
\begin{equation}
\label{eqn:schubvardef}
\{F_\bullet\mid \dim(F_p\cap E_q)= r_w(p,q) \ \forall p,q\}.
\end{equation}
The conditions in \ref{eqn:schubvardef} are called {\bf rank
conditions} The {\bf Schubert variety} $X_w$ is the closure of the
Schubert cell $X^\circ_w$; its points correspond to the flags
$$\{F_\bullet\mid \dim(F_p\cap E_q)\geq r_w(p,q) \ \forall p,q\}.$$
Bruhat order has an alternative definition in terms of Schubert
varieties; the Schubert variety $X_w$ is a union of Schubert cells,
and $u\leq w$ if and only if $X^\circ_u\subset X_w$.  In each Schubert
cell $X^\circ_w$ there is a {\bf Schubert point} $e_w$, which is the
point associated to the permutation matrix $w$; in terms of flags, the
flag $E^{(w)}_\bullet$ corresponding to $e_w$ is defined by
$E^{(w)}_i=\mathbb{C}\{e_{w(1)},\ldots,e_{w(i)}\}$.  The Schubert cell
$X^\circ_w$ is the orbit of $e_w$ under the left action of the group
$B$.

Many of the rank conditions in~(\ref{eqn:schubvardef}) are actually
redundant.  Fulton \cite{Ful} showed that for any $w$ there is a
minimal set, called the {\bf coessential set}, of rank conditions
which suffice to define $X_w$.  To be precise, the coessential set is
given by
$$\operatorname*{Coess}(w)=\{(p,q)\mid w(p)\leq q<w(p+1),
w^{-1}(q)\leq p<w^{-1}(q+1)\}
\footnote{Fulton~\cite{Ful} indexes Schubert varieties in a manner
reversed from our indexing as it is more convenient in his context.
As a result, his Schubert varieties are defined by inequalities in the
opposite direction, and he defines the {\bf essential set} with
inequalities reversed from ours.  Our conventions also differ from
those of Cortez~\cite{Cor1} in replacing her $p-1$ with $p$.},$$
and a flag $F_\bullet$ corresponds to
a point in $X_w$ if and only if $\dim(F_p\cap E_q)\geq r_w(p,q)$ for
all $(p,q)\in\operatorname*{Coess}(w)$.

While we have distinguished between points in flag and Schubert
varieties and the flags they correspond to here, we will be freely
ignore this distinction in the rest of the paper.

\subsection{Pattern avoidance and interval pattern avoidance}
\label{sect:patavoid}

Let $v\in S_m$ and $w\in S_n$, with $m\leq n$.  A {\bf (pattern)
embedding} of $v$ into $w$ is a set of indices $i_1<\cdots<i_m$ such
that the entries of $w$ in those indices are in the same relative
order as the entries of $v$.  Stated precisely, this means that, for
all $j,k\in\{1,\ldots,m\}$, $v(j)<v(k)$ if and only if
$w(i_j)<w(i_k)$.  A permutation $w$ is said to {\bf avoid} $v$ if
there are no embeddings of $v$ into $w$.

Now let $[x,v]\subseteq S_m$ and $[u,w]\subseteq S_n$ be two intervals
in Bruhat order.  An {\bf (interval) (pattern) embedding} of $[x,v]$
into $[u,w]$ is a simultaneous pattern embedding of $x$ into $u$ and
$v$ into $w$ using the same set of indices $i_1<\cdots<i_m$, with the
additional property that $[x,v]$ and $[u,w]$ are isomorphic as posets.
For the last condition, it suffices to check that
$\ell(v)-\ell(x)=\ell(w)-\ell(u)$~\cite[Lemma 2.1]{WooYong}.

Note that given the embedding indices $i_1<\cdots<i_m$, any three of
the four permutations $x$, $v$, $u$, and $w$ determine the fourth.
Therefore, for convenience, we sometimes drop $u$ from the terminology
and discuss embeddings of $[x,v]$ in $w$, with $u$ implied.  We also
say that $w$ {\bf (interval) (pattern) avoids} $[x,v]$ if there are no
interval pattern embeddings of $[x,v]$ into $[u,w]$ for any $u\leq w$.

\subsection{Singular locus of Schubert varieties}
\label{sect:singlocusdesc}

Now we describe combinatorially the singular loci of Schubert
varieties.  The results of this section are due independently to
Billey and Warrington \cite{BilWar}, Cortez \cite{Cor1, Cor2}, Kassel,
Lascoux, and Reutenauer \cite{KLR}, and Manivel \cite{Man}.

Stated in terms of interval pattern embeddings as
in~\cite[Thm. 6.1]{WooYong}, the theorem is as follows.  We use the
convention that the segment ``$j\cdots i$'' means $j, j-1, j-2,\ldots,
i+1,i$.  In particular, if $j<i$ then the segment is empty.

\begin{theorem}
\label{thm:singlocus}
The Schubert variety $X_w$ is singular at $e_{u^\prime}$ if and only if there
exists $u$ with $u^\prime\leq u<w$ such that one of the
following (infinitely many) intervals embeds in $[u,w]$:
\begin{enumerate}
\item[I:] $\big[(y+1)a\cdots 1(y+z+2)\cdots(y+2),\ \ \ (y+z+2)(y+1)y\cdots 2 (y+z+1)\cdots(y+2) 1\big]$ for some integers $y,z>0$.
\item[IIA:] $\big[(y+1)\cdots 1 (y+3) (y+2) (y+z+4)\cdots(y+4),\ \ \ (y+3)(y+1)\cdots 2(y+z+4)1(y+z+3)\cdots(y+4)(y+2)\big]$ for some integers $y,z\geq0$.
\item[IIB:] $\big[1(y+3)\cdots2(y+4), \ \ \  (y+3)(y+4)(y+2)\cdots312 \big]$ for some integer $y>1$.
\end{enumerate}

Equivalently, the irreducible components of the singular locus of
$X_w$ are the subvarieties $X_u$ for which one of these intervals
embeds in $[u,w]$.

\end{theorem}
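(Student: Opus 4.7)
The first move is to recognize that $\mathrm{Sing}(X_w)$ is $B$-stable: the Borel $B$ acts on $X_w$, the Schubert cells are exactly the $B$-orbits, and the singular locus is intrinsically defined. Consequently $\mathrm{Sing}(X_w)$ is a union of closed Schubert subvarieties, so its irreducible components have the form $X_u$ for certain $u<w$, and $e_{u'}$ lies in $\mathrm{Sing}(X_w)$ iff $u'\leq u$ for some such $u$. This is exactly the ``there exists $u$ with $u'\leq u$'' quantifier in the statement, so the problem reduces to identifying the indexing set of the components.

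Next I would invoke the interval pattern avoidance principle of Woo--Yong \cite{WooYong}: smoothness of $X_w$ at $e_u$ is determined by the interval $[u,w]$ as a Bruhat-embedded pattern. Explicitly, if $[x,v]$ embeds in $[u,w]$ as an interval pattern, then $X_w$ is singular at $e_u$ iff $X_v$ is singular at $e_x$. This reduces the classification to finding the \emph{minimal} singular intervals $[x,v]$ and checking that the three families I, IIA, IIB exhaust them. For sufficiency, one verifies directly that each of the three families produces a singular point; this can be done via Kumar's tangent space criterion --- which in type $A$ states $\dim T_{e_u} X_w = \#\{t\in T : u<tu\leq w\}$, with smoothness iff this equals $\ell(w)$ --- yielding at least one excess tangent direction in each case. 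Geometrically, one can instead exhibit the positive-dimensional fiber of a Cortez resolution from \cite{Cor1,Cor2} over $e_x$.

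The main difficulty is necessity: classifying all minimal singular intervals as being of type I, IIA, or IIB. The approach is to study where the rank conditions defining $X_w$ via Fulton's coessential set fail to cut out $X_w$ transversally near $e_u$. Minimal obstructions fall into two combinatorial flavors, roughly governed by whether $[u,w]$ contains a $3412$-type local pattern (yielding the one-parameter family I and the two-parameter family IIA) or a $4231$-type local pattern (yielding family IIB); the bipartition reflects the two ways a defining rank condition at a coessential box can be ``loosened'' at $e_u$. The core combinatorial work is then a case analysis: for any interval $[x,v]$ with $X_v$ singular at $e_x$, one exhibits an interval sub-embedding of one of I, IIA, or IIB into $[x,v]$. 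This enumeration is intricate --- infinitely many candidate intervals must be ruled out by explicit constructions, and the infinite parameters $y,z$ in the three families must be matched to the sizes of ``violating rectangles'' in the coessential set --- and constitutes the heart of the four independent proofs cited just before the statement.
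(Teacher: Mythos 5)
The paper does not prove Theorem~\ref{thm:singlocus}; it states it as a known result, explicitly citing the four independent proofs of Billey--Warrington, Cortez, Kassel--Lascoux--Reutenauer, and Manivel, and the interval-pattern reformulation from \cite{WooYong}. There is therefore no in-paper proof to compare against, and your proposal is in effect an outline that defers the same combinatorial core to the same literature: you correctly record the soft reductions ($B$-stability implies the singular locus is a union of $X_u$'s; Kumar's tangent-space criterion or a Cortez-resolution fiber gives sufficiency; interval-pattern containment reduces necessity to classifying minimal singular intervals), but the classification of minimal singular intervals \emph{is} the theorem, and you do not carry it out. As written the proposal would not constitute a proof.

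There is also a concrete error in the one piece of structural detail you do commit to. You claim the bipartition is ``$3412$-type yielding the one-parameter family I and the two-parameter family IIA'' versus ``$4231$-type yielding family IIB.'' This is backwards. Setting $y=z=1$ in family I gives $4231$, and the paper (Section~\ref{sect:singlocusdesc}) explicitly associates type~I components to $4231$-like embeddings, while \emph{both} type~II families arise from $3412$ embeddings ($y=z=0$ in IIA gives $3412$, and the critical $3412$ embedding $i<j<k<m$ with $B$ empty gives IIA, with $A$ empty gives IIB). The parameter counts are also off: family I is two-parameter ($y,z>0$), family IIA is two-parameter ($y,z\geq 0$), and family IIB is the one-parameter family ($y>1$). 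If you intend to sketch necessity by splitting on whether the local obstruction at a failing coessential box is a $4231$ or a $3412$, the split must send $4231$ to type~I and $3412$ to types~IIA/IIB, with the subcase IIA versus IIB governed by whether the region you called ``$B$'' or the region ``$A$'' is empty.
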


We call irreducible components of the singular locus of $X_w$ type I
or type II (or IIA or IIB) depending on the interval which embeds in
$[u,w]$, as labelled above.

We also wish to restate this theorem in terms of the graph of $w$,
which is closer in spirit to the original statements~\cite{BilWar,
Cor2, KLR, Man}.

A type I component of the singular locus of $X_w$ is associated to an
embedding of $(y+z+2)(y+1)y\cdots 2 (y+z+1)\cdots(y+2) 1$ into $w$.
If we label the embedding by
$i=j_0<j_1<\cdots<j_y<k_1<\cdots<k_z<m=k_{z+1}$, the requirement that
these positions give the appropriate interval embedding is equivalent
to the requirement that the regions $\{(p,q)\mid j_{r-1}<p<j_r,
w(j_r)<q<w(i)\}$, $\{(p,q)\mid k_s<p<k_{s+1}, w(m)<q<w(k_s)\}$, and
$\{(p,q)\mid j_y<p<k_1,w(m)<q<w(i)\}$ contain no point $(p,w(p))$ in
the graph of $w$ for all $r$, $1\leq r\leq y$, and for all $s$, $1\leq
s\leq z$.  This is illustrated in Figure~\ref{fig:typeIsingconfig}.
We will usually say that the type I component given by this embedding
is defined by $i$, the set $\{j_1,\ldots,j_y\}$, the set
$\{k_1,\ldots,k_z\}$, and $m$.

\begin{figure}[htbp]
\begin{center}

\input{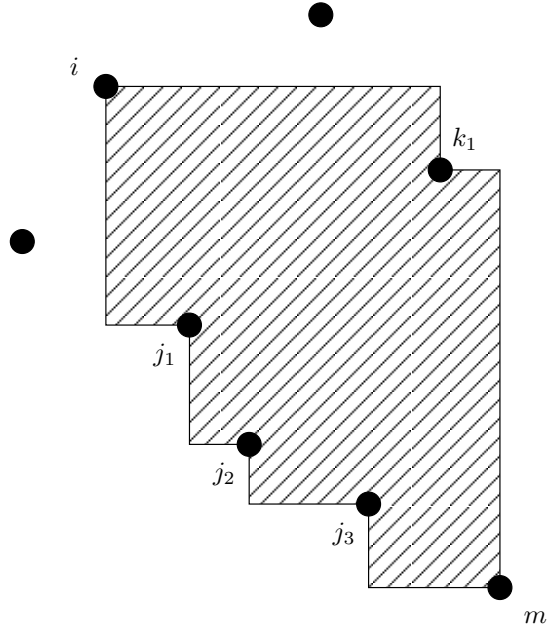}

\caption{A type I embedding with $y=3$, $z=1$, defining a component of
the singular locus for $w=685392714$.  The shaded region is not
allowed have points in the graph of $w$.}
\label{fig:typeIsingconfig}
\end{center}
\end{figure}

Every type II component of the singular locus $X_w$ is defined by four
indices $i<j<k<m$ which gives an embedding of $3412$ into $w$.  The
interval pattern embedding requirement forces the regions $\{(p,q)\mid
i<p<j, w(m)<q<w(i)\}$, $\{(p,q)\mid j<p<k, w(i)<q<w(j)\}$,
$\{(p,q)\mid k<p<m, w(m)<q<w(i)\}$, and $\{(p,q)\mid j<p<k,
w(k)<q<w(m)\}$ to have no points in the graph of $w$.  We call these
regions the {\bf critical regions} of the $3412$ embedding, and if
they are empty, we call $i<j<k<m$ a {\bf critical $3412$ embedding}
whether or not they are part of a type II component.

Given a critical $3412$ embedding $i<j<k<m$, let $B=\{p\mid j<p<k,
w(m)<w(p)<w(i)\}$, $A_1=\{p\mid i<p<j, w(k)<w(p)<w(m)\}$, $A_2=\{p\mid
k<p<m, w(i)<w(p)<w(j)\}$, and $A=A_1\cup A_2$.  We call these regions
the $A$, $A_1$, $A_2$, and $B$ regions associated to our critical
$3412$ embedding.  This is illustrated in
Figure~\ref{fig:typeIIsingconfig}.  If $w(b_1)>w(b_2)$ for all
$b_1<b_2\in B$, we say our critical $3412$ embedding is {\bf reduced}.
If a critical embedding is not reduced, there will necessarily be at
least one critical $3412$ embedding involving $i$, $j$, and two
indices in $B$, and one involving two indices in $B$, $k$, and $m$; by
induction each will include at least one reduced critical $3412$
embedding.

\begin{figure}[htbp]
\begin{center}

\input{typeIIsingconfig.pstex_t}

\caption{A critical $3412$ embedding in $w=2574136$.  The shaded
regions are the critical regions of the embedding.}
\label{fig:typeIIsingconfig}
\end{center}
\end{figure}

We associate one or two irreducible components of the singular locus
of $X_w$ to every reduced critical $3412$ embedding.  If $B$ is empty,
then the embedding is part of a component of type IIA.  If $A$ is
empty, then the embedding is part of a component of type IIB.  Note
that any type II component of the singular locus is associated to
exactly one reduced critical $3412$ embedding.  However, if both $A$
and $B$ are nonempty, then we do not have a type II component.  In
this case, we can associate a type I component of the singular locus
to our reduced critical $3412$ embedding $i<j<k<m$.  When both $A_1$
and $B$ are nonempty, then $i$, a nonempty subset of $A_1$, $B$, and
$k$ define a type I component; in this case $w$ has an embedding of
$526413$.  When both $A_2$ and $B$ are nonempty, then $j$, $B$, a
nonempty subset of $A_2$, and $m$ define a type I component; in this
case $w$ has an embedding of $463152$.  When $A_1$, $A_2$, and $B$ are
all nonempty, we have two distinct type I components associated to our
$3412$ embedding.  Note that it is possible for a type I component to
be associated to more than one reduced critical $3412$ embedding, as
in the permutation $47318625$.

\section{Necessity in the covexillary case}
\label{sect:covex}

We begin with the case where $w$ avoids $3412$; such a permutation is
commonly called {\bf covexillary}.  We show here that, if $w$ is
covexillary, the singular locus of $X_w$ has only one component, and
$w$ avoids $653421$ and $632541$, then $P_{id,w}(q)=1+q$.  Throughout
this section $w$ is assumed to be covexillary unless otherwise noted.

\subsection{The Cortez-Zelevinsky resolution}
For a covexillary permutation, the coessential set has the special
property that, for any
$(p,q),(p^\prime,q^\prime)\in\operatorname*{Coess}(w)$ with $p\leq
p^\prime$, we also have $q\leq q^\prime$.  Therefore have a natural
total order on the coessential set, and we label its elements
$(p_1,q_1),\ldots,(p_k,q_k)$ in order.  We let $r_i=r_w(p_i,q_i)$;
note that, by the definition of $r_w$ and the minimality of the
coessential set, $r_i<r_j$ when $i<j$.  When $r_i=\min\{p_i,q_i\}$, we
call $(p_i,q_i)$ an {\bf inclusion element} of the coessential set,
since the condition it implies for $X_w$ will either be
$E_{q_i}\subseteq F_{p_i}$ (if $r_i=q_i$) or $F_{p_i}\subseteq
E_{q_i}$ (if $r_i=p_i$).

Zelevinsky~\cite{Zel} described some resolutions of singularities of
$X_w$ in the case where $w$ has at most one ascent (meaning that
$w(i)<w(i+1)$ for at most one index $i$), explaining a formula of
Lascoux and Sch\"utzenberger~\cite{LasSchu} for Kazhdan-Lusztig
polynomials $P_{v,w}(q)$ in that case.  Following a generalization by
Lascoux~\cite{Las} of this formula to covexillary permutations,
Cortez~\cite{Cor1} generalized the Zelevinsky resolution to this case.

Let $\mathcal{F}_{i_1,\ldots,i_k}$ denote the partial flag manifold
whose points correspond to flags whose component subspaces have
dimensions $i_1<\cdots<i_k$.  Define the configuration variety $Z_w$
by $$Z_w:=\{(G_\bullet,F_\bullet)\in
\mathcal{F}_{r_1,\ldots,r_k}(\mathbb{C}^n) \times X_w \mid
G_{r_i}\subseteq (F_{p_i}\cap E_{q_i}) \ \forall i\}.$$ Cortez shows
that the projection $\pi_2: Z_w\rightarrow X_w$ is a resolution of
singularities.  She furthermore shows that the exceptional locus of
$\pi_2$ is precisely the singular locus of $X_w$, and describes a
one-to-one correspondence between components of the singular locus of
$X_w$ and elements of the coessential set which are not inclusion
elements.  (This last fact about the singular locus was implicit in
Lascoux's formula \cite{Las} for covexillary Kazhdan-Lusztig
polynomials.)

We now have the following lemma, whose proof is deferred to
Section~\ref{sect:lemmas}.

\begin{lemma}
\label{lem:covexonecomp}
Suppose the singular locus of $X_w$ has only one component.  If $w$
contains both $53241$ and $52431$, then $w$ contains $632541$.
\end{lemma}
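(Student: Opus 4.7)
I would prove the contrapositive by contradiction: assume $w$ is covexillary, contains both $53241$ and $52431$, and avoids $632541$, yet $X_w$ has exactly one singular component. The aim is to exhibit two distinct non-inclusion elements of $\operatorname{Coess}(w)$, contradicting Cortez's bijection between non-inclusion coessential elements and singular components of $X_w$. Fix a $53241$ embedding at positions $a_1<a_2<a_3<a_4<a_5$ (with relative values $5,3,2,4,1$, so $w(a_1)>w(a_4)>w(a_2)>w(a_3)>w(a_5)$) and a $52431$ embedding at $b_1<b_2<b_3<b_4<b_5$ (so $w(b_1)>w(b_3)>w(b_4)>w(b_2)>w(b_5)$). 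Since $w$ is covexillary, $\operatorname{Coess}(w)$ is totally ordered, which will be used to canonically attach a coessential element to each pattern embedding.

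The first half of the argument extracts two candidate non-inclusion coessential elements. From the $53241$ embedding, positions $a_1,a_2,a_3$ all lie weakly to the left of $a_3$ with values weakly above $w(a_3)$, while $a_4,a_5$ lie to the right of $a_3$ with values below $w(a_2)$; walking along the totally ordered staircase of $\operatorname{Coess}(w)$ we identify the unique element $(p^{(1)},q^{(1)})$ with $p^{(1)}\geq a_3$ and $q^{(1)}\geq w(a_3)$ that is minimal in this sense. Setting $r^{(1)}=r_w(p^{(1)},q^{(1)})$, the $53241$ embedding forces $p^{(1)}-r^{(1)}\geq 3$, making $(p^{(1)},q^{(1)})$ non-inclusion. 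Analogously, from $52431$ we extract $(p^{(2)},q^{(2)})$ with $q^{(2)}-r^{(2)}\geq 3$. If $(p^{(1)},q^{(1)})\neq (p^{(2)},q^{(2)})$, we already have two non-inclusion coessential elements and the hypothesis is contradicted.

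Otherwise these elements coincide at a single $(p_0,q_0)$ with $p_0-r_0\geq 3$ and $q_0-r_0\geq 3$. The second half of the argument combines the two pattern embeddings using this coincidence. Covexillarity forbids any $3412$ among the combined indices $\{a_i\}\cup\{b_j\}$, which together with the coincidence of coessential elements forces the alignment $a_1=b_1$ (or with one chosen as the other's shadow), $a_5=b_5$, and the ordering $a_3<b_3$. From here the six positions $i=a_1,\ j_1=a_2,\ j_2=a_3,\ k_1=b_3,\ k_2=b_4,\ m=a_5$ satisfy $i<j_1<j_2<k_1<k_2<m$, and the value inequalities $w(a_1)>w(b_3)>w(b_4)>w(a_2)>w(a_3)>w(a_5)$ (forced by $3412$-avoidance applied to carefully chosen quadruples) produce precisely the pattern $632541$, contradicting the standing assumption.

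The main obstacle is the combinatorial rigidity step in the second half: showing that, once the two coessential elements coincide, the $a_i$ and $b_j$ must interleave in the specific way that delivers the $632541$ pattern. This requires systematic case analysis on the possible relative positions of $a_2,a_3$ versus $b_2,b_3,b_4$, ruling out each alternative by exhibiting either a $3412$ pattern (violating covexillarity) or a separate non-inclusion coessential element (violating the one-component hypothesis). A secondary subtlety is the precise canonical assignment of a non-inclusion coessential element to each pattern embedding in Step~1; this uses minimality in the totally ordered $\operatorname{Coess}(w)$, which is specific to the covexillary setting.
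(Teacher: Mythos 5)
Your high-level strategy --- locate the unique non-inclusion coessential element shared by the two embeddings (via Cortez's bijection and the single-component hypothesis) and then assemble a $632541$ --- is exactly what the paper does. But two details are wrong and one genuine step is missing.

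First, the claim that the $53241$ embedding forces $p^{(1)}-r^{(1)}\geq 3$ is false. From an ascent pair of the pattern one gets only $p-r\geq 1$ (the position with pattern value $5$ lies to the left of $p$ with value above $q$) and $q-r\geq 1$ (the position with pattern value $1$ lies to the right with value below $q$). These two facts together already give non-inclusion, which is all that is needed; the bound of $3$ is neither true in general nor used anywhere.

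Second, and this is the substantive gap, the final assembly does not work as written. You fix the six indices as $a_1,a_2,a_3,b_3,b_4,a_5$ and assert the chain $w(a_1)>w(b_3)>w(b_4)>w(a_2)>w(a_3)>w(a_5)$, but neither $w(a_1)>w(b_3)$ nor $b_4<a_5$ follows from what you have established, and the claimed alignment $a_1=b_1$, $a_5=b_5$ is not true in general. The paper closes this gap with two steps that your sketch defers to an unspecified case analysis. (i) It extracts constraints from \emph{several} ascent pairs --- $(b,d)$ and $(c,d)$ from the $53241$ embedding $a<b<c<d<e$, and $(b',c')$ and $(b',d')$ from the $52431$ embedding $a'<b'<c'<d'<e'$ --- each yielding a non-inclusion coessential element and hence, by uniqueness, all the \emph{same} $(p,q)$; combining the box constraints gives $c<p<c'$ and $w(b)<q<w(d')$, which supply the four middle positions and inequalities of $632541$. (ii) It then argues via covexillarity (a second, incomparable coessential element would break the total order on the coessential set) that either $a'<b$ or $w(a)>w(c')$, and symmetrically either $e>d'$ or $w(e')<w(c)$; this disjunction lets one choose the first index of the $632541$ pattern from $\{a,a'\}$ and the last from $\{e,e'\}$, and in every branch the six resulting indices realize $632541$. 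This multi-ascent constraint step and the disjunctive covexillarity argument with flexible endpoints are the heart of the lemma, and they are exactly what is missing from your proposal.
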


This lemma allows us to treat separately the two cases where $w$
avoids $53241$ and where $w$ avoids $52431$.  We treat first the case
where $w$ avoids $53241$, for which we use the resolution of
singularities just described.  The case where $w$ avoids $52431$
requires the use of a resolution of singularities which is dual (in
the sense of dual vector spaces) to the one just described; we will
describe this resolution at the end of this section.

\subsection{The $53241$-avoiding case}
\label{sect:53241-avoid}

In this subsection we show that $P_{id,w}(q)=1+q$ when the singular
locus of $X_w$ has exactly one component and $w$ avoids $653421$ and
$53241$.  To maintain the flow of the argument, proofs of lemmas are
deferred to Section~\ref{sect:lemmas}.

When $(p_j,q_j)$ is an inclusion element, then $\dim(F_{p_j}\cap
E_{q_j})=r_j$ for any flag $F_\bullet$ in $X_w$ and not merely generic
flags in $X_w$.  Therefore, given any $F_\bullet$ we will have only
one choice for $G_{r_j}$, namely $F_{p_j}\cap E_{q_j}$, in the fiber
$\pi_2^{-1}(F_\bullet)$.  In particular, for the flag $E_\bullet$, any
$G_\bullet$ in the fiber $\pi^{-1}(E_\bullet)$ will have
$G_{r_j}=E_{r_j}$.  Now let $i$ be the unique index such that
$(p_i,q_i)$ is not an inclusion element; there is only one such index
since the singular locus of $X_w$ has only one irreducible component.
For convenience, we let $p=p_i$, $q=q_i$, and $r=r_i$.  Now we have
the following lemmas.  (In the case where $i=1$, we define
$p_0=q_0=r_0$.)

\begin{lemma}
\label{lem:covexfibub}
Suppose $w$ avoids $653421$ (and $3412$).  Then $\min\{p,q\}=r+1$.
\end{lemma}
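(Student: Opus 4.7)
My plan is a proof by contradiction: assume $\min\{p,q\}\geq r+2$ and derive a $653421$-pattern in $w$, contradicting the avoidance hypothesis.

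\emph{Setup and symmetry.} Both $3412$ and $653421$ are involutions, so the map $w\mapsto w^{-1}$ preserves both avoidance conditions while sending the unique non-inclusion coessential element $(p,q)$ to $(q,p)$. After possibly replacing $w$ by $w^{-1}$, I assume $p\leq q$; then $p-r\geq 2$ and $q-r\geq 2$. Put $A=\{i\leq p:w(i)>q\}$ and $B=\{j>p:w(j)\leq q\}$, which have sizes $p-r$ and $q-r$ respectively. Let $a_1<a_2$ be the two smallest elements of $A$ and $b_1<b_2$ the two smallest of $B$; the relations $w(p)\leq q<w(p+1)$ give $a_2\leq p-1$ and $b_1\geq p+2$. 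A single $3412$-check on $(a_1,a_2,b_1,b_2)$ forces at least one of $w(a_1)>w(a_2)$ or $w(b_1)>w(b_2)$; a further use of $w\leftrightarrow w^{-1}$ swaps these roles, so I assume WLOG that $w(a_1)>w(a_2)$.

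\emph{Locating the pattern.} I aim to find $653421$ at positions $(a_1,a_2,p,\ell,b_1,b_2)$, where $\ell$ satisfies $p<\ell<b_1$ and $w(\ell)>q$. The required value order is $w(a_1)>w(a_2)>w(\ell)>w(p)>w(b_1)>w(b_2)$. The first inequality holds by choice, and $w(\ell)>q\geq w(p)$ is automatic. For the second, I split on $w(a_2)$ versus $w(p+1)$. If $w(a_2)>w(p+1)$, take $\ell=p+1$. Otherwise, coessentiality gives $w^{-1}(q+1)>p$, so $q+1\notin w(A)$; since $w(a_2)>q$ this forces $w(a_2)\geq q+2$, and thus $w(p+1)>q+1$. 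Let $e=w^{-1}(q+1)$; then $e\geq p+2$, and a $3412$-check on $(a_2,p+1,b_1,e)$---whose values $w(b_1)<w(e)=q+1<w(a_2)<w(p+1)$ would form a forbidden $3412$-pattern if $e>b_1$---forces $e<b_1$. Set $\ell=e$, so $w(\ell)=q+1<w(a_2)$.

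\emph{Finishing inequalities and main obstacle.} The only remaining inequalities are $w(p)>w(b_1)>w(b_2)$. I would rule out a hypothetical $w(b_1)<w(b_2)$ by noting that the $3412$-forbidden quadruples $(c,p+1,b_1,b_2)$, as $c$ ranges over the $r$ positions of $\{1,\ldots,p\}\setminus A$, force $w(c)\leq w(b_2)$ for each such $c$. The resulting concentration of all $c$-values in $\{1,\ldots,w(b_2)\}$ makes the rank function $r_w$ stagnate on an interval above $(p,w(b_2))$, producing a second non-inclusion coessential ``corner'' and contradicting the one-component (uniqueness) hypothesis on the singular locus. An analogous argument gives $w(p)>w(b_1)$. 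The main obstacle of the proof is exactly this last step: translating hypothetical ascents among the small values into a second rank-deficit corner, which fully uses the interplay between $3412$-avoidance and the one-component hypothesis.
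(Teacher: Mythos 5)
Your argument hinges on placing the $653421$ pattern at the fixed positions $(a_1,a_2,p,\ell,b_1,b_2)$, which requires $w(p)>w(b_1)$ — and that inequality is simply false in general, even with all the ambient hypotheses in force. Take $w=986175432\in S_9$. This $w$ avoids $3412$; its coessential set is $\{(4,1),(4,6)\}$, with only $(4,6)$ failing to be an inclusion element, so the singular locus has one component and $(p,q)=(4,6)$, $r=2$, $\min\{p,q\}-r=2$. Here $A=\{1,2\}$ and $B=\{6,7,8,9\}$, so $a_1=1$, $a_2=2$, $b_1=6$, $b_2=7$, with $w(a_1)=9>w(a_2)=8$ and $w(b_1)=5>w(b_2)=4$ (so both sides of your dichotomy hold and there is no WLOG left to spend). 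But $w(p)=w(4)=1<5=w(b_1)$, so your chain $w(a_1)>w(a_2)>w(\ell)>w(p)>w(b_1)>w(b_2)$ breaks in the middle: the positions $(1,2,4,5,6,7)$ give the pattern $651432$, not $653421$. The actual embedding of $653421$ in this $w$ sits at $(1,2,3,5,6,7)$, i.e.\ with $w^{-1}(q)=3$ replacing $p=4$. Your proposed fallback via a ``second non-inclusion corner'' cannot be repaired to cover this: the example above has exactly one non-inclusion corner yet $w(p)<w(b_1)$, so an ``analogous'' one-component argument cannot possibly yield $w(p)>w(b_1)$.

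The paper does not commit to $p$ as the ``$3$''-position. After fixing $a<b\le p$ with $w(a)>w(b)>q$ and $p<c<d$ with $q\ge w(c)>w(d)$, it chooses the two middle positions from $\{w^{-1}(q),\,p,\,p+1,\,w^{-1}(q+1)\}$ according to whether $b\lessgtr w^{-1}(q)$ and $w(b)\lessgtr w(p+1)$. When $b<w^{-1}(q)$ the ``$3$''-position is taken to be $w^{-1}(q)$, whose value $q$ automatically beats $w(c)$, so no analogue of $w(p)>w(b_1)$ is ever needed; only when $b>w^{-1}(q)$ is $p$ used, and in that regime $w(p)>w(c)$ does follow from a $3412$-check on $w^{-1}(q)<b<p<c$. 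Two further remarks: the inequality $w(b_1)>w(b_2)$ you want is handled by the single $3412$-check on $(w^{-1}(q),p+1,b_1,b_2)$ (exactly the paper's quadruple), so the whole ``rank stagnation / second corner'' discussion is an unnecessary detour; and, relatedly, the paper proves this lemma purely from pattern avoidance, whereas your route imports the one-component hypothesis, which the lemma neither needs nor (as the example shows) can actually exploit to close the gap.
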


\begin{lemma}
\label{lem:covexfiblb}
Suppose $w$ avoids $53241$ (and $3412$).  Then $r_{i-1}=r-1$.
\end{lemma}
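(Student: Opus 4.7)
The plan is to prove the contrapositive: assuming $r_{i-1}\le r-2$, I construct a $53241$ embedding in $w$, contradicting $53241$-avoidance. Write $(p,q,r)=(p_i,q_i,r_i)$ and $(p',q',r')=(p_{i-1},q_{i-1},r_{i-1})$, with the convention $(p',q',r')=(0,0,0)$ when $i=1$. The intended five witness indices are: $\alpha\le p$ with $w(\alpha)>q$ (exists since $(p,q)$ is non-inclusion, so $r<p$); two indices $a_1<a_2$ in the L-shaped region $\{a\le p:\ w(a)\le q\}\setminus\{a\le p':\ w(a)\le q'\}$, which has at least $r-r'\ge 2$ elements; $\gamma:=w^{-1}(q+1)$, which satisfies $\gamma>p$ by the coessential condition $w^{-1}(q+1)>p$ and has value $q+1$; and $\beta>p$ with $w(\beta)\le q$ (exists since $r<q$).

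The target configuration is the ordering $\alpha<a_1<a_2<\gamma<\beta$ in positions with values satisfying $w(\alpha)>w(\gamma)>w(a_1)>w(a_2)>w(\beta)$, which literally spells a $53241$ embedding. Two inequalities are automatic from the setup: since $\alpha\le p<\gamma$, we have $\alpha\ne\gamma$ and therefore $w(\alpha)\ne q+1$, which combined with $w(\alpha)>q$ gives $w(\alpha)\ge q+2>q+1=w(\gamma)$; and $w(\gamma)=q+1>q\ge w(a_1)$. The position $a_2<\gamma$ is automatic because $a_2\le p<\gamma$. To secure $w(a_2)>w(\beta)$, I choose $\beta$ to minimize $w(\beta)$ among indices $>p$ with $w$-value $\le q$; to secure $\gamma<\beta$, if it fails one may substitute the alternative witness $p+1$ (which has $w(p+1)>q$) in place of $\gamma$, since the value-inequality $w(\alpha)>w(p+1)$ can be arranged by taking $\alpha$ to be the position realizing the largest value on $[1,p]$ exceeding $q$.

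The main obstacle is forcing the descent $w(a_1)>w(a_2)$ between the two new dots, which is not automatic: a priori they could occur in ascending $w$-order. Here covexillarity is the key lever: if $w(a_1)<w(a_2)$, then the quadruple $(\alpha, a_1, a_2, \beta)$ already forms a $3412$ pattern (with $\alpha<a_1<a_2<\beta$, $w(\beta)<w(a_1)<w(a_2)<w(\alpha)$ after choosing $\beta$ appropriately), contradicting $3412$-avoidance, unless one can replace one of the $a_i$ with a canonical coessential witness at $(p',q')$, namely $p'+1$ (which has $w(p'+1)>q'$) or $w^{-1}(q'+1)$ (which has position $>p'$), both forced to lie ``just outside'' the previous rectangle. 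The detailed bookkeeping therefore splits by which of the three strips of the L-region---the corner $[p'+1,p]\times[q'+1,q]$, the right strip $[p'+1,p]\times[1,q']$, or the top strip $[1,p']\times[q'+1,q]$---contains the two extra dots, with these coessential witnesses resolving each subcase. The base case $i=1$ is cleanest: the L-region is the full rectangle $[1,p]\times[1,q]$, the previous coessential constraints evaporate, and only covexillarity need be invoked to locate the desired descending pair among the $r\ge 2$ dots.
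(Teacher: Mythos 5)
Your proposed mechanism for forcing the descent $w(a_1)>w(a_2)$ has a fatal flaw. You claim that if $w(a_1)<w(a_2)$, then the quadruple $\alpha<a_1<a_2<\beta$ with $w(\beta)<w(a_1)<w(a_2)<w(\alpha)$ forms a $3412$ embedding. It does not: those positions and values spell the pattern $4231$, not $3412$. (Ranked, the values at positions $\alpha,a_1,a_2,\beta$ are $4,2,3,1$.) Covexillarity is precisely $3412$-avoidance, and it says nothing about $4231$; in fact, since $X_w$ is assumed singular and $w$ avoids $3412$, Lakshmibai--Sandhya forces $w$ to \emph{contain} $4231$, so the claimed contradiction can never materialize. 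Without this lever there is no way, inside your setup, to force the needed descent among the dots of the L-region, and the whole construction of a $53241$ embedding stalls. You also leave the position of $\alpha$ relative to $a_1,a_2$ uncontrolled, and you acknowledge but do not carry out the three-strip case analysis, so even if the $3412$ step were correct the proof would still be incomplete.

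The paper's proof proceeds quite differently and does not prove the contrapositive at all. It splits on whether $w(p)=q$. If $w(p)=q$, no pattern hypothesis is needed: any second dot $b\ne p$ in the box $[1,p]\times[1,q]$ but outside $[1,p_{i-1}]\times[1,q_{i-1}]$ would have $b<p$ and $w(b)<q$, and this forces a coessential element strictly between $(p_{i-1},q_{i-1})$ and $(p,q)$ in the total order --- a contradiction. If $w(p)\ne q$, the paper uses $3412$-avoidance and $53241$-avoidance not to build an embedding from $r_{i-1}\le r-2$, but to establish a structural ``strip'' statement: either every $b'$ with $a\le b'<p$ has $w(b')>q$ (for a certain minimal $a$), or the symmetric statement in values holds. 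This strip directly produces the adjacent coessential element $(a-1,q)$ or $(p,s-1)$ and a one-step count $r_{i-1}=r-1$. So the conclusion is reached constructively, and the pattern hypotheses are invoked to rule out configurations that would disrupt the strip, not to manufacture a forbidden pattern out of a putative bad $b$.
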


By definition, $G_r\supseteq G_{r_{i-1}}$.  Therefore, the fiber
$\pi_2^{-1}(e_{id})=\pi_2^{-1}(E_\bullet)$ is precisely
$$\{(G_\bullet,E_\bullet)\mid G_{r_j}=E_{r_j}\ \mathrm{for}\ j\neq i\
\mathrm{and}\ E_{r-1}=E_{r_{i-1}}\subseteq G_{r}\subseteq(E_p\cap E_q)=E_{r+1}\}.$$
This fiber is clearly isomorphic to $\mathbb{P}^1$.

By Polo's interpretation \cite{Polo} of the Decomposition
Theorem \cite{BeilBernDel},
$$H_{z,\pi_2}(q)=P_{z,w}(q)+\sum_{z\leq v<w}
q^{\ell(w)-\ell(v)}E_v(q)P_{z,v}(q),$$ where
$$H_{z,\pi_2}(q)=\sum_{i\geq0} q^i\dim H^{2i}(\pi_2^{-1}(e_z)),$$ and
the $E_v(q)$ are some Laurent polynomials in $q^{\frac{1}{2}}$,
depending only on $v$ and $\pi_2$ and not on $z$, which have with
positive integer coefficients and satisfy the identity
$E_v(q)=E_v(q^{-1})$.  Since the fiber of $\pi_2$ at $e_{id}$ is
$\mathbb{P}^1$, it follows that $H_{id,\pi_2}(q)=1+q$.  As
$P_{id,w}(q)\neq 1$ (since by assumption $X_w$ is singular), and all
coefficients of all polynomials involved must be nonnegative integers,
$E_v(q)=0$ for all $v$ and $$P_{id,w}(q)=1+q.$$

\subsection{The $52431$-avoiding case}

When $w$ avoids $52431$ instead, we use the resolution
$$Z^\prime_w:=\{(G_\bullet,F_\bullet)\in
\mathcal{F}_{r^\prime_1,\ldots,r^\prime_k}(\mathbb{C}^n) \times X_w \mid
G_{r^\prime_i}\supseteq (F_{p_i}+ E_{q_i}) \ \forall i\},$$ where
$r^\prime_i:=p_i+q_i-r_i$.  Arguments similar to the above show that,
if we let $i$ be the index so that $(p_i,q_i)$ does not give an
inclusion element, the fiber $\pi_2^{-1}(e_{id})$ is
$$\{(G_\bullet,E_\bullet)\mid G_{r^\prime_j}=E_{r^\prime_j}\
\mathrm{for}\ j\neq i\ \mathrm{and}\ E_{r^\prime_i-1}\subseteq
G_{r^\prime_i}\subseteq E_{r^\prime_i+1}\}.$$

Hence the fiber over $e_{id}$ is isomorphic to $\mathbb{P}^1$ and
$P_{id,w}(q)=1+q$ by the same argument as above.

\section{Necessity in the $3412$ containing case}
\label{sect:not-covex}
In this section we treat the case where $w$ contains a $3412$ pattern.
Our strategy in this case is to use another resolution of
singularities given by Cortez~\cite{Cor2}.  We will again apply the
Decomposition Theorem \cite{BeilBernDel} to this resolution, but in
this case the calculation is more complicated as the fiber at $e_{id}$
will no longer always be isomorphic to $\mathbb{P}^1$.  When the fiber
at $e_{id}$ is not $\mathbb{P}^1$, we will need to identify the
image of the exceptional locus, which turns out to be irreducible, and calculate
the generic fiber over the image of the exceptional locus as well as the fiber over
$e_{id}$.  We then follow Polo's strategy in \cite{Polo} to calculate
that $P_{id,w}(q)=1+q^h$, where $h$ is the minimum height of a $3412$
embedding as defined below.

\subsection{Cortez's resolution}

We begin with some definitions necessary for defining a variety $Z$
and a map $\pi_2: Z\rightarrow X_w$ which we will show is our
resolution of singularities.  Our notation and terminology generally
follows that of Cortez~\cite{Cor2}.  Given an embedding
$i_1<i_2<i_3<i_4$ of $3412$ into $w$, we call $w(i_1)-w(i_4)$ its {\bf
height ({\it hauteur})}, and $w(i_2)-w(i_3)$ its {\bf amplitude}.  Among all
embeddings of $3412$ in $w$, we take the ones with minimum height, and
among embeddings of minimum height, we choose one with minimum
amplitude.  As we will be continually referring this particular
embedding, we denote the indices of this embedding by $a<b<c<d$ and
entries of $w$ at these indices by $\alpha=w(a)$, $\beta=w(b)$,
$\gamma=w(c)$, and $\delta=w(d)$.  We let $h=\alpha-\delta$ be the
height of this embedding.

Let $\alpha^\prime$ be the largest number such that
$w^{-1}(\alpha^\prime)<w^{-1}(\alpha^\prime-1)<\cdots<w^{-1}(\alpha+1)<w^{-1}(\alpha)$
and $\delta^\prime$ the smallest number such that
$w^{-1}(\delta)<w^{-1}(\delta-1)<\cdots<w^{-1}(\delta^\prime)$.  Also let $a^\prime=w^{-1}(\alpha^\prime)$ and $d^\prime=w^{-1}(\delta^\prime)$.  Now
let $\kappa=\delta^\prime+\alpha^\prime-\alpha$, let $I$ denote the set of
simple transpositions
$\{s_{\delta^\prime},\cdots,s_{\alpha^\prime-1}\}$, and let $J$ be
$I\setminus\{s_\kappa\}$.  Furthermore, let $v=w_0^Jw_0^Iw$, where $w_0^J$
and $w_0^I$ denote the longest permutations in the parabolic subgroups
of $S_n$ generated by $J$ and $I$ respectively.

As an example, let $w=817396254\in S_9$; its graph is in
figure~\ref{fig:perm817396254}.  Then $a=3$, $b=5$, $c=7$, and $d=8$,
while $\alpha=7$, $\beta=9$, $\gamma=2$, and $\delta=5$.  We also have
$h=2$, $\alpha^\prime=8$ and $\delta^\prime=4$.  Hence $\kappa=5$ and
$v=514398276$.

\begin{figure}[htbp]
\begin{center}

\input{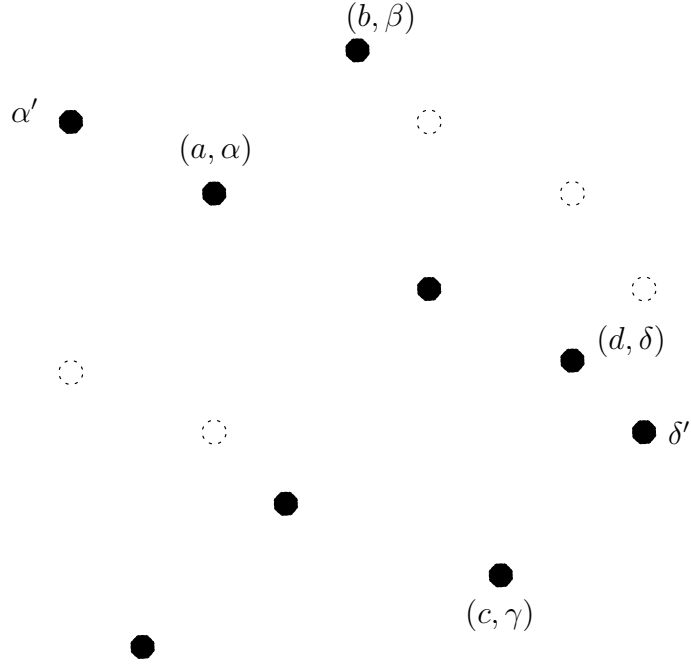}
\vspace{0.3in}
\caption{The graph of $w=817396254$ in black, labelled.  The points of
the graph of $v=514398276$ which are different from $w$ are in clear
circles.}
\label{fig:perm817396254}
\end{center}
\end{figure}

Now consider the variety $Z=P_I \times^{P_J} X_v$.  By definition, $Z$
is a quotient of $P_I \times X_v$ under the free action of $P_J$ where
$q\cdot(p,x)=(pq^{-1},q\cdot x)$ for any $q\in P_J$, $p\in P_I$, and
$x\in X_v$.  In the spirit of Magyar's realization \cite{Mag} of full
Bott-Samelson varieties as configuration varieties, we can also
consider $Z$ as the configuration variety

$$\{(G,F_\bullet)\in Gr_\kappa(\mathbb{C}^n) \times X_w \mid
E_{\delta^\prime-1}\subseteq G\subseteq E_{\alpha^\prime}
\ \mathrm{ and }\ \dim(F_i \cap G)\geq r_v(i,\kappa)\}.
\footnote{The statement of this geometric description
in~\cite{Cor2} has a typographical error.}$$

By the definition of $v$, $r_v(i,\kappa)=r_w(i,\alpha^\prime)$ for
$i<w^{-1}(\alpha-1)$, $r_v(i,\kappa)=r_w(i,\alpha^\prime)-j$ when
$w^{-1}(\alpha-j)\leq i<w^{-1}(\alpha-j-1)$, and
$r_v(i,\kappa)=r_w(i,\alpha^\prime)-\alpha^\prime+\kappa$ when $i\geq d^\prime$.
The last condition is automatically satisfied since, as $G\subseteq
E_{\alpha^\prime}$, we always have $\dim(G\cap F_i)\geq
\dim(E_{\alpha^\prime}\cap F_i)-(\alpha^\prime-\kappa)\geq
r_w(i,\alpha^\prime)-\alpha^\prime+\kappa$.

Cortez \cite{Cor2} introduced the variety $Z$ along with several other
varieties (constructed by defining
$\kappa=\delta^\prime+\alpha^\prime-\alpha+i-1$ for $i=1,\ldots,h$) to
help in describing the singular locus of Schubert
varieties\footnote{Cortez's choice of $3412$ embedding in \cite{Cor2}
is slightly different from ours.  For technical reasons she chooses
one of minimum amplitude among those satisfying a condition she calls
``well-filled'' ({\it bien remplie}).  As she notes, $3412$ embeddings
of minimum height are automatically ``well-filled''.}.  A virtually
identical proof would follow from analyzing the resolution given by
$i=h$ instead of $i=1$ as we are doing, but the other choices of $i$
give maps which are harder to analyze as they have more complicated
fibers.

The variety $Z$ has maps $\pi_1: Z\rightarrow P_I/P_J\cong
Gr_{\alpha^\prime-\alpha+1}(\mathbb{C}^{\alpha^\prime-\delta^\prime+1})$
sending the orbit of $(p,x)$ to the class of $p$ under the right
action of $P_J$ and $\pi_2: Z\rightarrow X_w$ sending the orbit of
$(p,x)$ to $p\cdot x$.  Under the configuration space description,
$\pi_1$ sends $(G,F_\bullet)$ to the point in
$Gr_{\alpha^\prime-\alpha+1}(\mathbb{C}^{\alpha^\prime-\delta^\prime+1})$
corresponding to the plane $G/E_{\delta^\prime-1}\subseteq
E_{\alpha^\prime}/E_{\delta^\prime-1}$, and $\pi_2$ sends
$(G,F_\bullet)$ to $F_\bullet$.  The map $\pi_1$ is a fiber bundle
with fiber $X_v$, and, by~\cite[Prop. 4.4]{Cor2}, the map $\pi_2$ is
surjective and birational. (In our case where the singular locus of
$X_w$ has only one component, the latter statement is also a
consequence our proof of Lemma~\ref{lem:excgeom}.)

In general $Z$ is not smooth; hence $\pi_2$ is only a partial
resolution of singularities.  However, we show in
Section~\ref{sect:lemmas} the following.

\begin{lemma}
\label{lem:ressm}
Suppose $w$ avoids $463152$ and the singular locus of $X_w$ has only
one irreducible component.  Then $Z$ is smooth.
\end{lemma}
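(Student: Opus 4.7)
The plan is to use the fiber bundle structure of $Z$ to reduce the smoothness of $Z$ to that of $X_v$, and then to verify that smoothness combinatorially. By construction $Z = P_I \times^{P_J} X_v$ is a locally trivial fiber bundle over the partial flag variety $P_I/P_J \cong \operatorname{Gr}_{\alpha'-\alpha+1}(\mathbb{C}^{\alpha'-\delta'+1})$ with fiber $X_v$. Since the base is smooth, $Z$ is smooth if and only if $X_v$ is smooth, and by the Lakshmibai--Sandhya pattern criterion $X_v$ is smooth if and only if $v$ avoids both $3412$ and $4231$. It therefore suffices to verify these two avoidance conditions for $v$ under the hypotheses on $w$.

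To set up the combinatorial analysis, I would first record how the one-line notation of $v$ compares to that of $w$. A direct computation shows that $w_0^J w_0^I$, acting on values, fixes every value outside $\{\delta',\ldots,\alpha'\}$ and on this range performs a block swap: the upper subrange $\{\alpha,\ldots,\alpha'\}$ is sent order-preservingly to $\{\delta',\ldots,\kappa\}$, while the lower subrange $\{\delta',\ldots,\alpha-1\}$ is sent order-preservingly to $\{\kappa+1,\ldots,\alpha'\}$. In particular, the chosen minimum-height embedding $a<b<c<d$, which in $w$ has values $\alpha,\beta,\gamma,\delta$, appears in $v$ with values $\delta'$, $\beta$, $\gamma$, $\delta+\alpha'-\alpha+1$, which form a $2413$ pattern rather than a $3412$ or $4231$. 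So the chosen embedding does not itself contribute a forbidden pattern to $v$.

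The bulk of the argument would be a case analysis. Suppose $v$ contains a $3412$ at positions $p_1<p_2<p_3<p_4$; the $4231$ case is analogous. For each $j$, the value $v(p_j)$ either agrees with $w(p_j)$ (when $w(p_j)\notin\{\delta',\ldots,\alpha'\}$) or arises from $w(p_j)$ via one of the two block shifts. I would split into cases according to which of these three possibilities holds for each $p_j$, and in the shifted cases further according to which block is involved. In every case, translating back to $w$ should produce one of three contradictions: (i) a $3412$ embedding in $w$ of strictly smaller height than $h$, or of height $h$ and strictly smaller amplitude than $(a,b,c,d)$, contradicting the minimality of the chosen embedding; (ii) a reduced critical $3412$ embedding in $w$ which, by the classification in Section~\ref{sect:singlocusdesc}, indexes an irreducible component of the singular locus distinct from the one associated to $(a,b,c,d)$, contradicting the one-component hypothesis; or (iii) a $463152$ pattern in $w$, assembled from some of the positions $p_j$ together with auxiliary points drawn from the decreasing value chains that define $\alpha'$ and $\delta'$, contradicting the pattern-avoidance hypothesis.

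The main obstacle will be executing this case analysis cleanly and, in particular, verifying that $463152$ is exactly the correct obstruction for the residual cases. The delicate cases are those in which the four positions of the hypothetical $3412$ in $v$ interact nontrivially with both blocks of the swap, so that neither (i) nor (ii) applies immediately; in those cases one must exhibit a $463152$ in $w$ by adjoining auxiliary points from the chains defining $\alpha'$ or $\delta'$. The emptiness of the critical regions of $(a,b,c,d)$, which follows from its minimality, should be invoked repeatedly to constrain where such auxiliary points can lie.
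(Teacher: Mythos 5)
Your reduction to the smoothness of $X_v$ via the fibration $\pi_1$, your block-swap description of $w_0^Jw_0^I$, and your observation that the minimal $3412$ embedding $a<b<c<d$ becomes a harmless $2413$ in $v$ are all correct and agree with the paper's setup. The gap lies in what you do next. Starting from a bare $3412$ or $4231$ pattern in $v$, obtained from Lakshmibai--Sandhya, gives you strictly less information than the paper starts with, and this loss matters. Your contradiction~(ii) asserts that the translated configuration gives a reduced critical $3412$ embedding in $w$ indexing a component of the singular locus \emph{distinct} from the one associated to $a<b<c<d$. But a bare pattern carries no emptiness conditions: nothing in your sketch rules out the translated configuration reducing to $a<b<c<d$ itself, and nothing guarantees the critical regions of any reduction stay empty after translation. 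The paper instead chooses a \emph{component of the singular locus} of $X_v$, which by Theorem~\ref{thm:singlocus} comes packaged as an interval pattern embedding whose forbidden regions are already empty in $v$; it then uses Lemmas~\ref{lem:onecompempty} and~\ref{lem:MNempty} to track those regions under translation to $w$. To make your route work you would first have to refine your pattern in $v$ to such an interval embedding, at which point the Lakshmibai--Sandhya step has bought you nothing and you are running the paper's argument. Your three listed contradictions are therefore not known to be exhaustive without that refinement step.

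A second, separate problem is the assertion that the $4231$ case is "analogous." A $4231$ points, via the interval embeddings of Theorem~\ref{thm:singlocus}(I), at a type~I component, and these do not have the rigid four-point structure of a critical $3412$: they carry variable-length index sets $\mathcal{J}$ and $\mathcal{K}$, and their interaction with the blocks $\mathcal{A}$ and $\mathcal{D}$ requires an entirely separate and substantially longer subcase analysis (whether $i\in\mathcal{A}$, whether the embedding has an index below $b$, whether $m<b$, $m=c$, or $c<m<d$, whether $c\in\mathcal{K}$, and an auxiliary construction adjoining $\mathcal{D}_1$ to $\mathcal{K}$). In the paper this occupies more than half the proof and is not a symmetric variant of the $3412$ case.
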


\subsection{Fibers of the resolution}
\label{sect:nonvexfibers}

We now describe of the fibers of $\pi_2$.  To highlight the main flow
of the argument, proofs of individual lemmas will be deferred to
Section~\ref{sect:lemmas}.  Define $M=\max\{p\mid p<c,
w(p)<\delta^\prime\}\cup \{a\}$ and $N=\max\{p\mid
w(p)<\delta^\prime\}$.

\begin{lemma}
\label{lem:fibgeom}
The fiber of $\pi_2$ over a flag $F_\bullet$ is $$\{G\in
Gr_\kappa(\mathbb{C}^n) \mid E_{\delta^\prime-1} + F_M \subseteq G
\subseteq E_{\alpha^\prime} \cap F_N\}.$$
\end{lemma}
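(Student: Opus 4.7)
My plan is to show that the rank inequalities $\dim(F_i\cap G)\ge r_v(i,\kappa)$ appearing in the configuration description of $Z$ reduce, in the presence of the coordinate-flag bounds $E_{\delta^\prime-1}\subseteq G\subseteq E_{\alpha^\prime}$, to exactly two binding containments: $F_M\subseteq G$ (coming from $i=M$) and $G\subseteq F_N$ (coming from $i=N$). Intersecting these then yields the stated fiber description.

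First I would unpack $r_v(i,\kappa)$. Since $v=w_0^Jw_0^Iw$ and $w_0^J,w_0^I$ act on values in $[\delta^\prime,\alpha^\prime]$ by block reversals, a direct check shows that $v(p)\le\kappa$ iff $w(p)\in[1,\delta^\prime-1]\cup[\alpha,\alpha^\prime]$; hence $r_v(i,\kappa)$ counts positions $p\le i$ with $w(p)$ in this set. The crux is verifying $r_v(M,\kappa)=M$ and $r_v(N,\kappa)=\kappa$. For the first, I must show no $p\le M$ has $w(p)\in[\delta^\prime,\alpha-1]$: a case analysis, depending on whether $p<a$ or $a<p\le M$ and on $w(p)$ relative to $\delta$, produces a new $3412$ embedding of strictly smaller height or smaller amplitude (for instance $(p,a,c,d)$ when $p<a$ and $w(p)\ge\delta$, with amplitude $\alpha-\gamma<\beta-\gamma$), contradicting the minimality of $(a,b,c,d)$. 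For the second, the total count of positions with $w(p)\in[1,\delta^\prime-1]\cup[\alpha,\alpha^\prime]$ is $(\delta^\prime-1)+(\alpha^\prime-\alpha+1)=\kappa$, and all lie $\le N$: positions with $w(p)<\delta^\prime$ are $\le N$ by definition, while positions with $w(p)\in[\alpha,\alpha^\prime]$ form the chain ending at $a$, and $a\le c\le N$ because $\gamma<\delta^\prime$ (the positions of the values $\delta,\delta-1,\ldots,\delta^\prime$ form an increasing chain starting at $d$, so $c<d$ precludes $\gamma\ge\delta^\prime$).

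Given these identities, the forward inclusion is immediate: any $G$ in the fiber satisfies $\dim(F_M\cap G)\ge M=\dim F_M$, forcing $F_M\subseteq G$, and $\dim(F_N\cap G)\ge\kappa=\dim G$, forcing $G\subseteq F_N$; together with $E_{\delta^\prime-1}\subseteq G\subseteq E_{\alpha^\prime}$ this gives the stated bounds. For the reverse, the rank conditions at $i\le M$ and $i\ge N$ follow trivially from $F_i\subseteq F_M\subseteq G$ and $G\subseteq F_N\subseteq F_i$ respectively. The main obstacle is the middle range $M<i<N$: here the natural bound $F_M+(F_i\cap E_{\delta^\prime-1})\subseteq F_i\cap G$ must be shown to have dimension at least $r_v(i,\kappa)$. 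Since by the analysis above $r_v(i,\kappa)-M$ counts positions $p\in[c,i]$ with $w(p)<\delta^\prime$, this reduces to reconciling the rank inequalities $\dim(F_i\cap E_{\delta^\prime-1})\ge r_w(i,\delta^\prime-1)$ coming from $F_\bullet\in X_w$ with the overlap term $\dim(F_M\cap E_{\delta^\prime-1})$, which may strictly exceed its generic value $r_w(M,\delta^\prime-1)$ at special flags; handling this potential discrepancy, using the fact that any excess intersection at $M$ persists into $F_i\cap E_{\delta^\prime-1}$ for $i>M$, is the delicate point of the argument.
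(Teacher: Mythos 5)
Your overall plan — deriving the fiber description from the two identities $r_v(M,\kappa)=M$ and $r_v(N,\kappa)=\kappa$, and using containments to handle the middle range — is the right shape, and your unpacking of $v(p)\le\kappa$ as $w(p)<\delta'$ or $\alpha\le w(p)\le\alpha'$ matches the paper. Your argument for $r_v(N,\kappa)=\kappa$ via the global count of positions is sound. However, there are two genuine gaps.

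First, for $r_v(M,\kappa)=M$ you only address the case $w(p)\in[\delta',\alpha-1]$, but you must also rule out $w(p)>\alpha'$ for $p\le M$. This case cannot be dispatched by minimality of the $3412$ embedding: a point $(p,w(p))$ with $p<a$ and $w(p)>\alpha'$ gives rise (when it gives rise to a $3412$ at all) to an embedding of \emph{larger} height $w(p)-\delta>h$, so there is no contradiction with minimality. The paper handles this via Lemma~\ref{lem:onecompempty} (i), (ii) and Lemma~\ref{lem:MNempty} (ii), all of which require the hypothesis that the singular locus of $X_w$ has only one irreducible component — their proofs exhibit a \emph{second} component of the singular locus when the relevant region is nonempty. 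Your proposal never invokes this hypothesis, and the lemma cannot be proved without it.

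Second, your reverse direction leaves the ``delicate point'' unresolved, and the route you propose — bounding $\dim(G\cap F_i)$ below by $\dim\bigl(F_M+(F_i\cap E_{\delta'-1})\bigr)$ and reconciling $r_w$ ranks with potential excess intersection at $M$ — is unnecessarily intricate. The paper instead computes $r_v(j,\kappa)$ exactly for $M<j<N$: it equals $M$ for $M<j<c$ and increases by one at each step from $c$ to $N$ (so $r_v(j,\kappa)=\kappa-(N-j)$ in that range). Then the containments $F_M\subseteq G\subseteq F_N$ alone give $\dim(G\cap F_j)\ge M$ in the first range and $\dim(G\cap F_j)\ge\dim G+\dim F_j-\dim F_N=\kappa-(N-j)$ in the second, with no reference to $E_{\delta'-1}$ or Schubert rank conditions on $F_\bullet$. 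Establishing that $v(j)>\kappa$ for $M<j<c$ and $v(j)\le\kappa$ for $c\le j\le N$ again uses Lemma~\ref{lem:MNempty} (iii), (iv), so the single-component hypothesis is needed here too. You should reorganize the middle-range argument along these lines and explicitly cite where the one-component hypothesis enters.
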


Now we focus on the fiber at the identity, and show that it is
isomorphic to $\mathbb{P}^h$.  Since the flag corresponding to the
identity is $E_\bullet$, it suffices by the previous lemma to show
that $\dim(E_{\delta^\prime-1} + E_M)=\kappa-1$ and
$\dim(E_{\alpha^\prime}\cap E_N)=\kappa+h$.

\begin{lemma}
\label{lem:fiblb}
Suppose that the singular locus of $X_w$ has only one component and
$w$ avoids $546213$.  Then $\dim(E_{\delta^\prime-1} + E_M)=\kappa-1$.
\end{lemma}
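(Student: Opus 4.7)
Since $E_{\delta^\prime - 1}$ and $E_M$ are spans of initial segments of the standard basis, their sum equals $E_{\max(\delta^\prime - 1, M)}$, of dimension $\max(\delta^\prime - 1, M)$. The lemma is therefore equivalent to the combinatorial identity $\max(\delta^\prime - 1, M) = \kappa - 1 = \delta^\prime + \alpha^\prime - \alpha - 1$. Since $\alpha \leq \alpha^\prime$ forces $\delta^\prime - 1 \leq \kappa - 1$ automatically, the task splits into proving $M \leq \kappa - 1$ and (when $\alpha < \alpha^\prime$) $M \geq \kappa - 1$.

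For the upper bound I would classify the values $w(p)$ for $p \in \{1, \ldots, M\}$. Three structural facts, using only the definitions of $\alpha^\prime$, $\delta^\prime$ and the minimum-height property of the embedding $(a,b,c,d)$, constrain these values: (i) the upper chain $w^{-1}(\alpha^\prime) < \cdots < w^{-1}(\alpha) = a$ fills $\alpha^\prime - \alpha + 1$ positions $\leq a \leq M$ with the values in $[\alpha, \alpha^\prime]$; (ii) the lower chain $d = w^{-1}(\delta) < \cdots < w^{-1}(\delta^\prime)$ places every value in $[\delta^\prime, \delta]$ at a position $\geq d > M$; (iii) any $p \leq a$ (respectively $a < p < c$) with $w(p) \in (\delta, \alpha - 1]$ would yield a strictly-smaller-height $3412$ embedding $(p, b, c, d)$ (respectively $(a, b, c, p)$), contradicting minimality. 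Consequently every value at a position $\leq M$ lies in $[1, \delta^\prime - 1] \cup [\alpha, \alpha^\prime] \cup (\alpha^\prime, n]$. Fact (ii) also forces $\gamma = w(c) < \delta^\prime$, so at most $\delta^\prime - 2$ values from $[1, \delta^\prime - 1]$ occupy positions $\leq M$. Writing $\ell$ for the number of positions $\leq M$ with value $> \alpha^\prime$, the count gives $M \leq (\alpha^\prime - \alpha + 1) + (\delta^\prime - 2) + \ell = \kappa - 1 + \ell$.

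The substantive work is to show $\ell = 0$ and to prove the lower bound when $\alpha^\prime > \alpha$. For $\ell = 0$, a position $p \leq M < c$ with $w(p) > \alpha^\prime$ would be analysed by cases on its location relative to $a^\prime, a, b$. When $p \neq b$, the minimum-amplitude choice of $(a,b,c,d)$ can be used to force $w(p) > \beta$, and then either the configuration of $p$ together with an extended upper chain ($\alpha^\prime > \alpha$) exhibits $546213$, or $(a, p, c, d)$ (respectively $(p, b, c, d)$) with its critical regions verified empty yields a reduced critical $3412$ embedding distinct from $(a,b,c,d)$, producing a second component of the singular locus and contradicting Theorem \ref{thm:singlocus}; the case $p = b$ is handled by combining $b \leq M$ with a forced low-value position in $(b,c)$. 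For the lower bound, the subcase $\alpha = \alpha^\prime$ is immediate; when $\alpha^\prime > \alpha$ and $\ell = 0$, the count $M = (\alpha^\prime - \alpha + 1) + k$ falls short of $\kappa - 1$ only if some low value distinct from $\gamma$ sits at a position $> c$, and such a configuration combined with the extended upper chain again produces either $546213$ or a second reduced critical $3412$ embedding.

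The main obstacle is the two case analyses just sketched: both amount to converting a hypothetical offending position (either a value $> \alpha^\prime$ at a position $\leq M$, or a non-$\gamma$ low value at a position $> c$) into an explicit embedding of $546213$ or into an alternative reduced critical $3412$ embedding. I expect the argument to go most cleanly by contradiction, with careful bookkeeping of which of the regions $A_1$, $A_2$, $B$ defined in Section \ref{sect:singlocusdesc} are nonempty, since these govern exactly which type I and type II components of the singular locus are forced to appear.
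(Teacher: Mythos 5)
Your reduction is correct as far as it goes: since $E_{\delta'-1}+E_M=E_{\max(\delta'-1,M)}$, the lemma is the combinatorial claim $\max(\delta'-1,M)=\kappa-1$, and because $\delta'-1\le\kappa-1$ always, this splits into $M\le\kappa-1$ together with the case analysis on whether $\alpha=\alpha'$ (in which case $\delta'-1=\kappa-1$ suffices) or $\alpha<\alpha'$ (in which case you need $M=\kappa-1$). The paper does exactly the same reduction. Where the two diverge is in how the two bounds are established, and here your plan is considerably more laborious than necessary and leaves the hard step unexecuted.

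For the upper bound, the paper has a one-line argument: the proof of Lemma~\ref{lem:fibgeom} has already shown $r_v(M,\kappa)=M$, and since $c>M$ with $v(c)\le\kappa$ one gets $M+1\le r_v(c,\kappa)\le\kappa$. Your counting argument for $M\le\kappa-1+\ell$ is correct, but the claim $\ell=0$ that you flag as ``substantive work'' is nothing but Lemma~\ref{lem:onecompempty}(i),(ii) together with Lemma~\ref{lem:MNempty}(ii), both already proved under the one-component hypothesis. Re-deriving them by hand is unnecessary, and your sketch of that re-derivation has issues: the case $p=b$ is vacuous (Lemma~\ref{lem:MNempty}(i) gives $M<b$), and for $p<a$ the minimum-amplitude hypothesis does not force $w(p)>\beta$ — ruling out $\alpha'<w(p)<\beta$ for $p<a$ is precisely Lemma~\ref{lem:onecompempty}(ii) and genuinely requires the one-component hypothesis, not just minimality of the $3412$ embedding.

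For the lower bound (the $\alpha<\alpha'$ case), your sketch stops short of identifying where $546213$ actually enters. The paper reduces $M=\kappa-1$ to showing $\{p\mid p>M,\,w(p)<\delta'\}=\{c\}$, then peels off subcases: the strip $M<p<c$ is empty by the definition of $M$; the subcases $p>d$ are handled by Lemma~\ref{lem:onecompempty}(vii),(viii); the subcase $c<p<d$, $\gamma<w(p)<\delta$ by Lemma~\ref{lem:onecompempty}(ix). The $546213$ avoidance is invoked for exactly one remaining set, $\{p\mid c<p<d,\,w(p)<\gamma\}$, and only in conjunction with $\alpha\ne\alpha'$. Your phrase ``such a configuration combined with the extended upper chain again produces either $546213$ or a second reduced critical $3412$ embedding'' gestures at this but does not carry out the case split or say which parts are absorbed by the already-established emptiness lemmas; as written it is a plan for a proof rather than a proof, and you would in effect be re-proving several parts of Lemma~\ref{lem:onecompempty} in the process.
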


\begin{lemma}
\label{lem:fibub}
Suppose that the singular locus of $X_w$ has only one component and
$w$ avoids $465132$.  Then $\dim(E_{\alpha^\prime} \cap E_N)=\kappa+h$.
\end{lemma}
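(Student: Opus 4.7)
Since $E_{\alpha'}$ and $E_N$ are both initial segments of the standard flag $E_\bullet$, one has $E_{\alpha'}\cap E_N=E_{\min(\alpha',N)}$ and the dimension in question is simply $\min(\alpha',N)$. The right-hand side $\kappa+h$ simplifies to $\alpha'+\delta'-\delta$, which is at most $\alpha'$ because $\delta'\leq\delta$. So the lemma reduces to the identity $\min(\alpha',N)=\alpha'+\delta'-\delta$.

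For the lower bound, $c$ serves as a natural witness. One always has $\gamma=w(c)<\delta'$, since otherwise $\gamma\in[\delta',\delta-1]$ would place $c=w^{-1}(\gamma)$ in the chain $d<w^{-1}(\delta-1)<\cdots<d'$ and hence force $c>d$, contradicting $c<d$. Therefore $N\geq c$, and when $c\geq\alpha'+\delta'-\delta$ the lower bound is immediate. In the residual case $c<\alpha'+\delta'-\delta$, I would invoke the one-component hypothesis to locate an additional position of value less than $\delta'$ somewhere in $[\alpha'+\delta'-\delta,d]$; otherwise, the ``extra low'' entries between $c$ and $d$ would combine with portions of the $\alpha'$- and $\delta'$-chains and with $(a,b,c,d)$ to produce a second reduced critical $3412$ embedding, contradicting the assumption of a single component.

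For the upper bound, the case $\delta'=\delta$ is automatic since then $\alpha'+\delta'-\delta=\alpha'$ and $\min(\alpha',N)\leq\alpha'$ trivially; assume henceforth $\delta'<\delta$. The central consequence of $465132$-avoidance in this regime is that no position $i$ with $a<i<b$ can satisfy $w(i)>\beta$: such an $i$ would make the six positions $(a,i,b,c,d,d')$ into a $465132$-embedding, since the relative values $\gamma<\delta'<\delta<\alpha<\beta<w(i)$ realize the pattern $4,6,5,1,3,2$ exactly. Combined with the four empty critical regions of the min-height--min-amplitude embedding $(a,b,c,d)$, this forces $w(i)<\delta$ for every $a<i<b$, and an analogous tightness propagates to the intervals $(b,c)$ and $(c,d)$. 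I would then show that any position $p$ with $\alpha'+\delta'-\delta<p\leq\alpha'$ and $w(p)<\delta'$ violates the resulting positional accounting, either by producing another $465132$-embedding or by forcing a second component of the singular locus. The main obstacle is making this accounting precise in the boundary sub-cases --- particularly when $w(p)<\gamma$ (so the roles of $c$ and $p$ as the ``1'' and ``2'' of the pattern must be swapped) or when $\alpha'=\alpha$ (so the $\alpha'$-chain is trivial and cannot itself supply a ``high'' element) --- where the one-component hypothesis must be invoked a second time to rule out the competing type~I or type~II decomposition of the singular locus that would otherwise accommodate $p$.
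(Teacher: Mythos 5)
You correctly identified the crucial new ingredient: when $\delta' < \delta$, a position $i$ with $a < i < b$ and $w(i) > \beta$ makes $(a, i, b, c, d, d')$ a $465132$-embedding, since $\gamma < \delta' < \delta < \alpha < \beta < w(i)$ (using $\gamma < \delta'$ and $\beta > \alpha'$, both forced by the chain definitions of $\delta'$ and $\alpha'$). The opening reduction to $\min(\alpha', N) = \kappa + h$ with $\kappa + h = \alpha' + \delta' - \delta \leq \alpha'$ is also correct, and this is essentially the paper's framing.

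Beyond that observation, though, neither inequality is actually proved. For the lower bound $N \geq \kappa + h$, you get only $N \geq c$ and then defer (``I would invoke the one-component hypothesis to locate an additional position\ldots''); the paper obtains it directly from $r_v(N,\kappa) = \kappa$ (established in the proof of Lemma~\ref{lem:fibgeom}) together with the $h$ witnesses $b, w^{-1}(\alpha-1), \ldots, w^{-1}(\delta+1)$, which by Lemma~\ref{lem:onecompempty}(vi),(xii) all lie strictly between $a$ and $c \leq N$, with no case split on $c$. For the upper bound, what must be shown (when $\delta' < \delta$) is that $\{p < N \mid w(p) > \alpha'\} = \{b\}$; the ranges $p < a$, $a < p < b$ with $\alpha' < w(p) < \beta$, $b < p < c$, and $c < p < N$ are handled by Lemma~\ref{lem:onecompempty}(i)--(v) and Lemma~\ref{lem:MNempty}(iv), leaving exactly $\{a < p < b,\ w(p) > \beta\}$ for the $465132$ argument. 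Your proposal instead targets positions $p$ with $\kappa + h < p \leq \alpha'$ and $w(p) < \delta'$ --- not the set that controls $N$ --- and explicitly leaves the real work undone (``I would then show\ldots'', ``The main obstacle is making this accounting precise\ldots''). The intermediate claim that avoidance ``forces $w(i) < \delta$ for every $a < i < b$'' is also a non sequitur: avoidance gives only $w(i) \leq \beta$, and the ``analogous tightness propagates'' to $(b,c)$ and $(c,d)$ has no content without citing the specific empty-region lemmas.
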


In the case where $h=1$, these are all the geometric facts we need.
When $h>1$, we identify the image of the exceptional locus as $X_u$
for a particular permutation $u$ of length $\ell(u)=\ell(w)-h$.  We
then show that the fiber over a generic point of $X_u$ is isomorphic
to $\mathbb{P}^{h-1}$.

First we describe the image of the exceptional locus geometrically.

\begin{lemma}
\label{lem:excgeom}
Suppose the singular locus of $X_w$ has only one component, and $h>1$.
Then the image of the exceptional locus of $\pi_2$ is
$\{F_\bullet\mid\dim(E_{\delta^\prime-1}\cap
F_M)>r_w(M,\delta^\prime-1)\}$.
\end{lemma}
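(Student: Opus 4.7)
The plan is to use Lemma~\ref{lem:fibgeom} to identify $\pi_2^{-1}(F_\bullet)$ with the Grassmannian of $\kappa$-planes trapped between
$$V_{\min}(F_\bullet):=E_{\delta^\prime-1}+F_M \quad\text{and}\quad V_{\max}(F_\bullet):=E_{\alpha^\prime}\cap F_N,$$
so that the image of the exceptional locus is the locus where this Grassmannian is positive-dimensional. That occurs precisely when both $\dim V_{\min}(F_\bullet)<\kappa$ and $\dim V_{\max}(F_\bullet)>\kappa$, so the task splits into translating the first inequality into the stated rank condition and verifying the second automatically on all of $X_w$ under the hypothesis $h>1$.

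For the first inequality, I use $\dim V_{\min}(F_\bullet)=M+(\delta^\prime-1)-\dim(E_{\delta^\prime-1}\cap F_M)$ together with the claim $r_w(M,\delta^\prime-1)=M+(\delta^\prime-1)-\kappa$. The upper bound $r_w(M,\delta^\prime-1)\le M-(\alpha^\prime-\alpha+1)=M+(\delta^\prime-1)-\kappa$ follows because the descending run $w^{-1}(\alpha^\prime)<\cdots<w^{-1}(\alpha)=a$ supplies $\alpha^\prime-\alpha+1$ positions at or below $M$ whose $w$-values are at least $\delta^\prime$. The matching lower bound uses surjectivity of $\pi_2$ (\cite[Prop.~4.4]{Cor2}) applied at $e_w$: nonemptiness of $\pi_2^{-1}(e_w)$ forces $\dim V_{\min}(e_w)\le\kappa$, i.e., $r_w(M,\delta^\prime-1)\ge M+(\delta^\prime-1)-\kappa$. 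With this identity in hand, $\dim V_{\min}(F_\bullet)<\kappa$ translates directly to $\dim(E_{\delta^\prime-1}\cap F_M)>r_w(M,\delta^\prime-1)$.

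For the second inequality, the Schubert condition gives $\dim V_{\max}(F_\bullet)\ge r_w(N,\alpha^\prime)$, so it suffices to show $r_w(N,\alpha^\prime)\ge\kappa+1$. I partition the positions $i\le N$ with $w(i)\le\alpha^\prime$ into three groups, disjoint by $w$-value: the $\delta^\prime-1$ positions with $w(i)<\delta^\prime$ (all at or below $N$ by the definition of $N$); the $\alpha^\prime-\alpha+1$ positions of the descending run from $\alpha^\prime$ down to $\alpha$ (all at or below $a\le N$); and, for each gap value $j\in\{\delta+1,\dots,\alpha-1\}$, the position $i_j:=w^{-1}(j)$. The key combinatorial step is to show that every gap position $i_j$ lies in $(b,c)$: if $i_j<b$ then $(i_j,b,c,d)$ is a $3412$ pattern of height $j-\delta<h$, and if $i_j>c$ then $(a,b,c,i_j)$ is a $3412$ pattern of height $\alpha-j<h$, each contradicting the minimality of $h$. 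Since $w(c)=\gamma<\delta^\prime$ places $c$ in the defining set of $N$, we have $N\ge c>i_j$, so the three groups together contribute at least $(\delta^\prime-1)+(\alpha^\prime-\alpha+1)+(h-1)=\kappa+h-1\ge\kappa+1$, using $h\ge2$.

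The main obstacle is the combinatorial case analysis for gap positions, which is also the unique place where the hypothesis $h>1$ enters directly (it guarantees the existence of at least one gap value). Using surjectivity of $\pi_2$ at $e_w$ sidesteps the otherwise delicate task of ruling out positions $i\le M$ with $w(i)>\alpha^\prime$, which would instead require additional input from the pattern-avoidance hypotheses of Theorem~\ref{thm:main}.
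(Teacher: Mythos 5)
Your proposal is correct and follows the same overall plan as the paper's: identify the fiber via Lemma~\ref{lem:fibgeom} as the Grassmannian of $\kappa$-planes pinched between $V_{\min}=E_{\delta'-1}+F_M$ and $V_{\max}=E_{\alpha'}\cap F_N$, observe that when $h>1$ the upper constraint $\dim V_{\max}>\kappa$ holds automatically on all of $X_w$ (because $r_w(N,\alpha')=\kappa+h-1>\kappa$), and conclude that the fiber is positive-dimensional exactly when $\dim V_{\min}<\kappa$, i.e.\ when $\dim(E_{\delta'-1}\cap F_M)>r_w(M,\delta'-1)$. The one genuine variation is in how you pin down $r_w(M,\delta'-1)=M+(\delta'-1)-\kappa$: the paper derives it directly from $r_v(M,\kappa)=M$ (established inside Lemma~\ref{lem:fibgeom}'s proof) together with the dictionary between $r_v$ and $r_w$, whereas you get the upper bound from the descending run $w^{-1}(\alpha')<\cdots<w^{-1}(\alpha)=a\leq M$ and the matching lower bound by feeding surjectivity of $\pi_2$ (Cortez, Prop.~4.4) back through Lemma~\ref{lem:fibgeom} at $e_w$, which forces $\dim V_{\min}(e_w)\leq\kappa$. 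This buys you a small economy — you never have to separately rule out positions $p\leq M$ with $w(p)>\alpha'$ — at the cost of invoking the external surjectivity statement, while the paper keeps the argument purely rank-theoretic. Your recount of $r_w(N,\alpha')$, including re-deriving that the gap positions $w^{-1}(j)$ for $\delta<j<\alpha$ all lie strictly between $b$ and $c$ by minimality of $h$, is the same computation the paper does (it quotes the gap fact rather than reproving it, but you use the same height-minimality contradiction the paper spells out in Lemma~\ref{lem:exccomb}).
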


Now let $\sigma\in S_n$ be the cycle $(\gamma, \delta+1, \delta+2,
\ldots, \alpha=\delta+h)$, and let $u=\sigma w$.  We show the
following.

\begin{lemma}
\label{lem:exccomb}
Assume that the singular locus of $X_w$ has only one component, that
$h>1$, and that $w$ avoids $526413$.  Then the image of the
exceptional locus of $\pi_2$ is $X_u$, $\ell(w)-\ell(u)=h$, and the
generic fiber over $X_u$ is isomorphic to $\mathbb{P}^{h-1}$.
\end{lemma}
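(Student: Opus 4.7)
My plan is to prove in order the three assertions: the combinatorial identities $\ell(w)-\ell(u)=h$ and $u\leq w$, the identification of the exceptional image with $X_u$, and the generic fiber dimension. All three rest on the following observation: for each $v\in\{\delta+1,\ldots,\alpha-1\}$, minimality of $h$ forces $b<w^{-1}(v)<c$, since otherwise $\{w^{-1}(v),b,c,d\}$ (if $w^{-1}(v)<b$) or $\{a,b,c,w^{-1}(v)\}$ (if $w^{-1}(v)>c$) would be a $3412$-embedding of height strictly less than $h$. Label these $h-1$ positions $p_1<\cdots<p_{h-1}$ and set $P:=\{a,c,p_1,\ldots,p_{h-1}\}$. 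Then $u=\sigma w$ and $w$ agree outside $P$, and on $P$ the values $\{\gamma,\delta+1,\ldots,\alpha\}$ are cyclically shifted by $\sigma$.

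For the length count, an inversion tally among pairs of positions in $P$ yields a deficit of exactly $h$ for $u$. For pairs $(i,j)$ with $i\in P$ and $j\notin P$, the inversion status can only flip when $w(j)\in(\gamma,\delta)$; the cases $j<a$ and $j>c$ produce pairwise cancellation between the contributions of $i=a$ (whose value drops from $\alpha$ to $\gamma$) and $i=c$ (whose value rises from $\gamma$ to $\delta+1$). The remaining case, $j\in(a,c)$ with $w(j)\in(\gamma,\delta)$, is ruled out by the hypotheses: if $j\in(a,b)$, then (using that $h>1$ forces at least one $p_k$ to exist) the six-index sequence $a<j<b<p_k<c<d$ realizes the forbidden pattern $526413$; if $j\in(b,c)\setminus P$, then the new $3412$-embedding $(a,b,j,d)$ together with the relevant $p_k$'s manufactures a type I component of $\operatorname{Sing}(X_w)$ distinct from that associated with $(a,b,c,d)$, contradicting the single-component hypothesis. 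An analogous rank-function comparison gives $u\leq w$ in Bruhat order.

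For the image of the exceptional locus, Lemma~\ref{lem:excgeom} identifies it as $\{F_\bullet\in X_w\mid\dim(E_{\delta^\prime-1}\cap F_M)>r_w(M,\delta^\prime-1)\}$. Since $\gamma<\delta^\prime$ (otherwise the descending run $\delta,\ldots,\delta^\prime$ would contain $\gamma$, forcing $w^{-1}(\gamma)>d$, contradicting $w^{-1}(\gamma)=c<d$) and $\gamma$ is the only element of $\{\gamma,\delta+1,\ldots,\alpha\}$ lying in $[1,\delta^\prime-1]$, and since $u$ relocates $\gamma$ from $c>M$ to $a\leq M$ while fixing all other values $\leq\delta^\prime-1$, we obtain $r_u(M,\delta^\prime-1)=r_w(M,\delta^\prime-1)+1$; hence the exceptional image lies in $X_u$. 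For the reverse inclusion I would compare Fulton's coessential sets of $u$ and $w$, showing that $\operatorname{Coess}(u)$ differs from $\operatorname{Coess}(w)$ only by adding or strengthening a single condition at $(M,\delta^\prime-1)$, so no further constraints arise. This coessential comparison, which again leans on the pattern avoidance and single-component hypotheses, is the main technical obstacle.

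Finally, by Lemma~\ref{lem:fibgeom} the fiber over $F_\bullet$ is $\{G\mid E_{\delta^\prime-1}+F_M\subseteq G\subseteq E_{\alpha^\prime}\cap F_N\}$. For a generic flag in the Schubert cell $X^\circ_u$, the lower bound $\dim(E_{\delta^\prime-1}+F_M)$ remains at $\kappa-1$: the tightened intersection $\dim(E_{\delta^\prime-1}\cap F_M)=r_u(M,\delta^\prime-1)$ exactly compensates for the change from the identity flag. The upper bound $\dim(E_{\alpha^\prime}\cap F_N)=r_u(N,\alpha^\prime)$ drops by one from its value $\kappa+h$ at the identity—since $\sigma$ preserves the set of positions hosting the values $\{\gamma,\delta+1,\ldots,\alpha\}$, one has $r_u(N,\alpha^\prime)=r_w(N,\alpha^\prime)=\kappa+h-1$, one less than the naive intersection dimension $\min(N,\alpha^\prime)=\kappa+h$. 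Hence the fiber is $\operatorname{Gr}_1(\mathbb{C}^h)=\mathbb{P}^{h-1}$, as claimed.
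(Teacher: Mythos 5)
Your overall architecture is right: you invoke Lemma~\ref{lem:excgeom} and Lemma~\ref{lem:fibgeom} correctly, and your fiber dimension count at the end matches the paper's. Your length computation by direct inversion tallying is a legitimate alternative to the paper's argument (the paper instead shows the staircase region $R$ between $u$ and $w$ contains no point of the graph of $w$, so each transposition $t_i$ drops $\ell$ by exactly one). However, your justification for excluding $j\in(b,c)$ with $\gamma<w(j)<\delta$ is misattributed: the correct reason is simply that $a<b<j<d$ would be a $3412$-embedding of the same height $h$ but amplitude $\beta-w(j)<\beta-\gamma$, contradicting the choice of $a<b<c<d$ as minimizing amplitude among height-$h$ embeddings (this is Lemma~\ref{lem:onecompempty}(x)); the claim about manufacturing a new type~I component is neither needed nor correct as stated.

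The genuine gap is precisely the step you flag as ``the main technical obstacle.'' From $r_u(M,\delta'-1)=r_w(M,\delta'-1)+1$ and $u\leq w$ you immediately get $X_u\subseteq Y:=\{F_\bullet\in X_w\mid\dim(E_{\delta'-1}\cap F_M)>r_w(M,\delta'-1)\}$; the substantive direction is $Y\subseteq X_u$, and this is left unproved. Your proposed route through Fulton's coessential sets is not carried out, and it is not clear it would go through cleanly: one must track how $\operatorname{Coess}$ changes as $\gamma$ relocates from $c$ to $a$ and the values $\delta+1,\ldots,\alpha$ shift by one, and even for shared coessential elements the rank values $r_u$ and $r_w$ differ by $1$ on the region $R$, so the ``same conditions plus one'' picture needs justification. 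The paper sidesteps this entirely with a pointwise rank-function argument: it shows that on $R$, $r_w(p,q)$ is as small as any rank function can be, given the value at $(M,\delta'-1)$ and the general Lipschitz-type constraints $0\leq r(p+1,q)-r(p,q)\leq1$, etc.; this minimality follows from $R$ being empty of graph points plus $w(p)<\gamma$ for $a<p\leq M$ and $w^{-1}(q)<a$ for $\gamma<q\leq\delta'-1$ — the same emptiness facts your inversion count requires. Consequently any $x\leq w$ with $r_x(M,\delta'-1)>r_w(M,\delta'-1)$ must satisfy $r_x(p,q)>r_w(p,q)=r_u(p,q)-1$ throughout $R$ and $r_x\geq r_w=r_u$ off $R$, hence $x\leq u$. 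That is the argument you need to close the gap; it is both shorter and more robust than a coessential-set comparison.
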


\subsection{Calculation of $P_{id,w}(q)$}

We now have all the geometric information we need to calculate
$P_{id,w}(q)$, following the methods of Polo
\cite{Polo}.  The Decomposition Theorem \cite{BeilBernDel} shows that
$$H_{z,\pi_2}(q)=P_{z,w}(q)+\sum_{z\leq v<w}
q^{\ell(w)-\ell(v)}E_v(q)P_{z,v}(q),$$ where
$$H_{z,\pi_2}(q)=\sum_{i\geq0} q^i\dim H^{2i}(\pi_2^{-1}(e_z)),$$ and
$E_v(q)$ are some Laurent polynomials in $q^{\frac{1}{2}}$, depending
on $v$ and $\pi_2$ but not $z$, which have positive integer
coefficients and satisfy the identity $E_v(q)=E_v(q^{-1})$.
\footnote{For those readers familiar with the Decomposition Theorem:
No local systems appear in the formula since $X_w$ has a
stratification, compatible with $\pi_2$, into Schubert cells, all of
which are simply connected.}

When $h=1$, the fiber of $\pi_2$ at $e_{id}$ is isomorphic to
$\mathbb{P}^1$, and so by same argument as in Section~\ref{sect:53241-avoid},
$P_{id,w}(q)=1+q$.

For $h>1$, let $u$ be the permutation specified above.  For any $x$
with $x\leq w$ and $x\not\leq u$, $\pi_2^{-1}(e_x)$ is a point, so
$X_w$ is smooth at $e_x$, and $H_{x,w}(q)=1=P_{x,w}(q)$.  Therefore,
by induction downwards from $w$, $E_x(q)=0$ for any $x$ with $x\leq w$
and $x\not\leq u$.

Now we calculate $E_u(q)$.  From the above it follows that
$H_{u,\pi_2}(q)=P_{u,w}(q)+q^\frac{h}{2}E_u(q)$.  Since
$H_{u,\pi_2}(q)-P_{u,w}(q)$ has nonnegative coefficients and $\deg
P_{u,w}(q)\leq (h-1)/2 < h-1$, $$P_{u,w}(q)=1+\cdots+q^{s-1}$$ for some $s$,
$1\leq s\leq h-1$.  Then $q^\frac{h}{2}E_u(q)=q^s+\cdots+q^{h-1}$, so
$E_u(q)=q^{s-\frac{h}{2}}+\cdots+q^{\frac{h}{2}-1}$.  Since
$E_u(q^{-1})=E_u(q)$, $s=1$, so
$$q^\frac{h}{2}E_u(q)=q+\cdots+q^{h-1}.$$

To calculate $P_{id,w}(q)$, note that $H_{id,\pi_2}=1+q+\cdots+q^h$,
so
\begin{eqnarray*}
  P_{id,w}(q) & = & 
  H_{id,\pi_2}(q)-\sum_{x\leq w} q^{\frac{\ell(w)-\ell(x)}{2}}E_x(q)P_{id,x}(q)\\
& = & 1+\cdots+q^h-(q+\cdots+q^{h-1})P_{id,u}(q)+\sum_{x<u}
  q^{\frac{\ell(w)-\ell(x)}{2}}E_x(q)P_{id,x}(q).
\end{eqnarray*}
Evaluating at $q=1$, we see that
$$P_{id,w}(1)=h+1-(h-1)P_{id,u}(1)-\sum_{x<u} E_x(1)P_{id,x}(1).$$  Since
$P_{id,w}(1)\geq 2$, $P_{id,x}(1)$ is a positive integer for all $x$, and
$E_x(1)$ is a nonnegative integer for all $x$, we must have that
$P_{id,u}(1)=1$ and $E_x(1)=0$ for all $x<u$.  Therefore,
$P_{id,u}(q)=1$ and $E_x(q)=0$ for all $x<u$, and $$P_{id,w}(q)=1+q^h.$$

Readers may note that the last computation is essentially identical to
the one given by Polo in the proof of \cite[Prop. 2.4(b)]{Polo}.  In
fact, in this case the resolution we use, due to Cortez \cite{Cor2},
is very similar to the one described by Polo.

\begin{remark}
\label{rem:comb-calc}
We could have used a simultaneous Bialynicki-Birula cell
decomposition~\cite{BialI,BialII,Car} of the $Z$ and $X_w$, compatible
with the map $\pi_2$, to combinatorialize the above computation,
turning many geometrically stated lemmas into purely combinatorial
ones.  To be specific, for any $u$, the number $H_{u,\pi_2}(1)$ is the
number of factorizations $u=\sigma\tau$ such that $\tau\leq v$,
$\sigma\in W_I$, and $\sigma$ is maximal in its right $W_J$ coset.
(The last condition can be replaced by any condition that forces us to
pick at most one $\sigma$ from any $W_J$ coset.)  This observation
does not simplify the argument; the combinatorics required to
determine which factorizations of the identity satisfy these
conditions are exactly the same as the combinatorics used above to
calculate the fiber of $\pi_2$ at the identity.  It should also be
possible to combinatorially calculate $H_{u,\pi_2}(q)$ by attaching
the appropriate statistic to such a factorization.  If $Z$ were the
full Bott-Samelson resolution, the result would be Deodhar's approach
\cite{Deo} to calculating Kazhdan-Lusztig polynomials, and the
aforementioned statistic would be his defect statistic.  However, when
$Z$ is some other resolution, even one ``of Bott-Samelson type,'' no
reasonable combinatorial description of the statistic appears to be
known.
\end{remark}

\section{Lemmas}
\label{sect:lemmas}

In this section we give proofs for the lemmas of
Sections~\ref{sect:covex} and \ref{sect:not-covex}.  We begin with
Lemma~\ref{lem:covexonecomp}.

\begin{lemma:covexonecomp}
Suppose the singular locus of $X_w$ has only one component.  If $w$
contains both $53241$ and $52431$, then $w$ contains $632541$.
\end{lemma:covexonecomp}

\begin{proof}
Let $a<b<c<d<e$ be an embedding of $53241$, and
$a^\prime<b^\prime<c^\prime<d^\prime<e^\prime$ an embedding of
$52431$.  Since $b<d$ and $w(b)<w(d)$, there must be an element
$(p,q)$ of the coessential set such that $b<p<d$ and $w(b)<q<w(d)$.
This cannot be an inclusion element since $a<p$ but $w(a)>q$, and
$q<e$ but $w(e)>p$.  We also have $c<d$ and $w(c)<w(d)$, also inducing
an element of the coessential set which is not an inclusion element.
Since the singular locus of $X_w$ has only one component, this element
must also be $(p,q)$.  The pairs $b^\prime<c^\prime$ and
$b^\prime<d^\prime$ also each induce an element of the coessential set
which is not an inclusion element; hence these must also be the same
as $(p,q)$.  Therefore, $b<c<p<c^\prime<d^\prime$, and
$w(c)<w(b)<q<w(d^\prime)<w(c^\prime)$.

If $a^\prime>b$ and $w(a)<w(c^\prime)$, then there must be an element
$(p^\prime,q^\prime)$ of the coessential set with
$a<b<p^\prime<a^\prime<c^\prime$ and
$w(b)<w(a)<q^\prime<w(c^\prime)<w(a^\prime)$.  We now have
$p^\prime<a^\prime<p$ but $q<a\leq q^\prime$, contradicting $w$ being
covexillary.  Therefore, $a^\prime<b$ or $w(a)>w(c^\prime)$.  Similarly,
$e>d^\prime$ or $w(e^\prime)<w(c)$.  Let $a^{\prime\prime}$ be $a$ if
$w(a)>w(c^\prime)$ and $a^\prime$ if $a^\prime<b$, and
$e^{\prime\prime}$ be $e$ if $e>d^\prime$ and $e^\prime$ if
$w(e^\prime)<w(c)$.

Now $a^{\prime\prime}<b<c<c^\prime<d^\prime<e^{\prime\prime}$ is an
embedding of $632541$ in $w$.

\end{proof}

Recall that $(p,q)=(p_i,q_i)$ is the unique element of the coessential
set which is not an inclusion element, and $r=r_i=r_w(p,q)$.
Furthermore, $(p_{i-1},q_{i-1})$ is the immediately preceeding element
of the coessential set, and
$r_{i-1}=r_w(p_{i-1},q_{i-1}=\min(p_{i-1},q_{i-1})$.

\begin{lemma:covexfibub}
Suppose $w$ avoids $653421$ (and $3412$).  Then $\min\{p,q\}=r+1$.
\end{lemma:covexfibub}

\begin{proof}
Suppose that $r\leq\min\{p,q\}-2$; we show that in that case we have an
embedding of $3412$ or $653421$.  Since $r\leq p-2$, there exist
$a<b\leq p$ with $w(a),w(b)>q$.  Note that $w(a)>w(b)$, as, otherwise,
$a<b<p<w^{-1}(q+1)$ would be an embedding of $3412$.  Similarly, since
$r\leq q-2$, there exist $d>c>p$ with $w(d),w(c)\leq q$, and we have
$w(c)>w(d)$ since $w^{-1}(q)<p+1<c<d$ is an embedding of $3412$
otherwise.  Furthermore, if $b>w^{-1}(q)$, then $w(c)<w(p)$, as
otherwise $w^{-1}(q)<b<p<c$ would be an embedding of $3412$, and if
$w(b)<w(p+1)$, then $c>w^{-1}(q+1)$ to avoid $b<p+1<c<w^{-1}(q+1)$
being a similar embedding.

Now we have up to four potential cases depending on whether
$b<w^{-1}(q)$ or $b>w^{-1}(q)$, and whether $w(b)>w(p+1)$ or
$w(b)<w(p+1)$.  In each case we produce an embedding of $653421$.  If
$b<w^{-1}(q)$ and $w(b)>w(p+1)$, then $a<b<w^{-1}(q)<p+1<c<d$ is such
an embedding.  If $b<w^{-1}(q)$ and $w(b)<w(p+1)$, then we use
$a<b<w^{-1}(q)<q^{-1}(q+1)<c<d$.  If $b>w^{-1}(q)$ and $w(b)>w(p+1)$,
then we use $a<b<p<p+1<c<d$.  Finally, if $b>w^{-1}(q)$ and
$w(b)<w(p+1)$, $a<b<p<w^{-1}(q+1)<c<d$ produces the desired embedding.
\end{proof}

\begin{lemma:covexfiblb}
Suppose $w$ avoids $53241$ (and $3412$).  Then $r_{i-1}=r-1$.
\end{lemma:covexfiblb}

\begin{proof}

  We treat the two cases where $w(p)=q$ and $w(p)\neq q$ separately.
  First suppose $w(p)=q$.  Suppose for contradiction that
  $r_{i-1}<r-1$.  Then there must exist an index $b\neq p$ which
  contributes to $r=r_w(p,q)$ but not to
  $r_{i-1}=r_w(p_{i-1},q_{i-1})$.  This happens when $b\leq p$ and
  $w(b)\leq q$, but $b>p_{i-1}$ or $w(b)>q_{i-1}$.  Since $b<p$ and
  $w(b)<w(p)=q$, there must be an element $(p_j,q_j)$ of the
  coessential set such that $b\leq p_j<p$ and $w(b)\leq q_j<q$.  But
  then we have that $p_j>p_{i-1}$ or $q_j>q_{i-1}$, contradicting the
  definition of $(p_{i-1},q_{i-1})$ as the next element smaller than
  $(p_i,q_i)$ in our total ordering of the coessential set.
  Therefore, we must have $r_{i-1}=r_i-1$.

Now suppose $w(p)\neq q$.  Since $r<p$ and $r<q$, there exists $b<p$
with $w(b)>q$ and $c>p$ with $w(c)<q$.  Note that we cannot have both
$w(b)<w(p+1)$ and $c<w^{-1}(q+1)$, as, otherwise,
$b<p+1<c<w^{-1}(q+1)$ would be an embedding of $3412$.  It then
follows that we cannot have both $b<w^{-1}(q)$ and $w(c)<w(p)$; when
$w(b)>w(p+1)$, $b<w^{-1}(q)$ and $w(c)<w(p)$ imply that
$b<w^{-1}(q)<p<p+1<c$ is an embedding of $53241$, and when
$c>w^{-1}(q+1)$, $b<w^{-1}(q)$ and $w(c)<w(p)$ imply that
$b<w^{-1}(q)<p<w^{-1}(q+1)<c$ is an embedding of $53241$.  Therefore,
$b>w^{-1}(q)$ or $w(c)>w(p)$, and we now treat these two cases
separately.

Suppose $b>w^{-1}(q)$.  We must have $w(c)<w(p)$ in this case, because
otherwise $w^{-1}(q)<b<p<c$ would be an embedding of $3412$.  Let
$a=\min\{b\mid w^{-1}(q)<b<p, w(b)>q\}$.  We show that, for all
$b^\prime$ with $a\leq b^\prime<p$, $w(b^\prime)>q$.  First, we cannot
have both $w(a)<w(p+1)$ and $c<w^{-1}(q+1)$, as $a<p+1<c<w^{-1}(q+1)$
would be an embedding of $3412$ otherwise.  Now, if
$w(b^\prime)<w(p)$, then $w^{-1}(q)<a<b^\prime<p$ is an embedding of
$3412$, and if $w(p)<w(b^\prime)<q$, then either $a<b^\prime<p<p+1<c$
or $a<b^\prime<p<w^{-1}(q+1)<c$ would be an embedding of $53241$,
depending on whether $w(a)>w(p+1)$ or $c>w^{-1}(q+1)$.

We have now established that there is an element of the coessential
set at $(a-1,q)$.  Since this shares its second coordinate with
$(p,q)$, and $w(b)>q$ for all $b$, $a<b<p$, there are no
elements of the coessential set in between, and
$(p_{i-1},q_{i-1})=(a-1,q)$, so that $r_{i-1}=r_w(a-1,q)$.  Now,
$r_w(a-1,q)=r_w(p,q)-\#\{j\mid a-1<j\leq p, w(j)\leq q\}$.
The latter list has just one element, namely $j=p$, so
$r_{i-1}=r_i-1$.

Now suppose $w(c)>w(p)$ instead.  Then we let $s=\min\{t\mid
w(p)<t<q, w^{-1}(s)>p\}$.  By arguments symmetric with the
above, for all $s^\prime$ with $s\leq s^\prime<q$,
$s^\prime>w(p)$.  Therefore, there is an element of the coessential
set at $(p,s-1)$, and this is the element immediately before
$(p,q)$ in the total ordering.  Furthermore,
$r_w(p,s-1)=r_w(p,q)-\#\{j\mid s-1<j<q, w^{-1}(j)\leq p\}$,
and the latter list has one element, namely $j=q$, so $r_{i-1}=r_i-1$.
\end{proof}

Before moving on to prove the lemmas of Section~\ref{sect:not-covex},
we prove the following two lemmas which will be repeatedly used
further.  As in Section~\ref{sect:not-covex}, $a<b<c<d$ is
an embedding of $3412$ of minimal amplitude among such embeddings of
minimal height, and $\alpha$, $\beta$, $\gamma$, and $\delta$
respectively denote $w(a)$, $w(b)$, $w(c)$, and $w(d)$.

For Lemmas~\ref{lem:ressm}, \ref{lem:onecompempty}, and
\ref{lem:MNempty}, we use the description of the singular locus given
in Section~\ref{sect:singlocusdesc}.  It is worth noting that, since
we only need to detect when the singular locus has more than one
irreducible component, it is also possible to prove these lemmas using
Lemma~\ref{lem:num-sing-comps} (which was originally
\cite[Sect. 13]{BilWar}).  Another alternate approach is first to
directly prove Theorem \ref{thm:pats} by using the condition of
avoiding all its patterns instead of the condition of having one
component in the singular locus in the lemmas and then to prove
Theorem \ref{thm:main} as a corollary.  Neither approach appears to
substantially reduce the need for detailed case-by-case analysis in
the proof of these lemmas.

\begin{lemma}
\label{lem:onecompempty}
Suppose the singular locus of $X_w$ has only one component.  Then the
following sets are empty.

\begin{enumerate}
\item[(i)] $\{p\mid p<a, w(p)>\beta\}$
\item[(ii)] $\{p\mid p<a, \alpha^\prime<w(p)<\beta\}$
\item[(iii)] $\{p\mid a<p<b, \alpha<w(p)<\beta\}$
\item[(iv)] $\{p\mid b<p<c, \alpha<w(p)<\beta\}$
\item[(v)] $\{p\mid b<p<c, \beta<w(p)\}$
\item[(vi)] $\{p\mid p<b, \delta^\prime<w(p)<\alpha\}$
\item[(vii)] $\{p\mid p>d, w(p)<\gamma\}$
\item[(viii)] $\{p\mid p>d, \gamma<w(p)<\delta^\prime\}$
\item[(ix)] $\{p\mid c<p<d, \gamma<w(p)<\delta\}$
\item[(x)] $\{p\mid b<p<c, \gamma<w(p)<\delta\}$
\item[(xi)] $\{p\mid b<p<c, w(p)<\gamma\}$
\item[(xii)] $\{p\mid p>c, \delta<w(p)<\alpha^\prime\}$
\end{enumerate}
\end{lemma}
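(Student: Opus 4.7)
The plan is to prove each of the twelve emptiness statements by contradiction: assuming a witness $p$ exists in the specified set, I will derive either (A) a $3412$ embedding in $w$ of height strictly less than $h=\alpha-\delta$, or of height $h$ but amplitude strictly less than $\beta-\gamma$, contradicting the minimality of the chosen embedding $a<b<c<d$; or (B) a second irreducible component of the singular locus of $X_w$, contradicting the one-component hypothesis.

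Statements (iii), (iv), (ix), and (x) fall under (A) and follow immediately from amplitude minimality. A witness $p$ in (iii) or (iv) yields the $3412$ embedding obtained from $a<b<c<d$ by replacing $b$ with $p$; this has height $h$ and amplitude $w(p)-\gamma<\beta-\gamma$. Symmetrically, a witness $p$ in (ix) or (x) yields the embedding with $c$ replaced by $p$, of height $h$ and amplitude $\beta-w(p)<\beta-\gamma$.

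For the remaining eight items, I will combine $p$ with the four points $a,b,c,d$, and in (ii), (vi), (viii), (xii) also with the extensions $a^\prime$ or $d^\prime$, to produce a short configuration whose Schubert variety has at least two irreducible components in its singular locus. In (i) and (ii), where $p<a$ and $w(p)$ is large, the configuration $p<a<b<c<d$ realizes $53412$, which contains the critical $3412$ at $a<b<c<d$ together with a $4231$-embedding at $p<a<b<c$; the latter contributes a type I component distinct from the original. In (v) and (xi), where $p$ lies between $b$ and $c$ with extreme value, the configuration realizes $34512$ or $45123$, each of which contains two distinct reduced critical $3412$ embeddings (giving two type II components). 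Items (vii) and (viii) are the reflected analogues of (i) and (ii), and items (vi) and (xii) use the extensions $\delta^\prime$ and $\alpha^\prime$ to build a second reduced critical $3412$ embedding of height exactly $h$ but distinct from $a<b<c<d$.

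The hard part will be verifying, in each case, that the second component genuinely exists in $X_w$ --- namely, that no further element of $w$ lies in a critical region of the newly identified $3412$ embedding or in a forbidden region of the $4231$-like type I pattern. I plan to handle this by choosing the witness $p$ extremally (leftmost, rightmost, or with extremal $w$-value), and by invoking the previously established items (i)--(xii) as a joint inductive hypothesis to rule out the offending interior elements. The case analysis is long, but each individual case reduces to an elementary application of Theorem~\ref{thm:singlocus} and the extremality of $p$.
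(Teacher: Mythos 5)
The high-level plan — split the twelve items into ``contradicts minimality of the chosen $3412$'' and ``forces a second component'' — is the right one and matches the paper's strategy, and items (iii), (iv), (ix), (x) are handled correctly. However, there are concrete errors in the classification and in the proposed constructions that would derail the argument.

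First, (vi) and (xii) do \emph{not} require a second-component argument, and in fact the embedding you propose for them cannot exist. For $p$ in set (vi), the definition of $\delta^\prime$ forces $w(p)>\delta$ (if $\delta^\prime<w(p)\leq\delta$ then $w^{-1}(w(p))>d$, contradicting $p<b$), so $p<b<c<d$ is a $3412$ embedding of height $w(p)-\delta<h$, directly contradicting height minimality. Similarly for $p$ in (xii), the definition of $\alpha^\prime$ rules out $\alpha\leq w(p)\leq\alpha^\prime$ (those preimages lie left of $a$, but $p>c$), so $\delta<w(p)<\alpha$ and $a<b<c<p$ is a $3412$ embedding of height $\alpha-w(p)<h$. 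In both cases a second critical $3412$ of height exactly $h$ cannot be built from $p$ and the extensions, because $p$ already furnishes one of strictly smaller height; trying to use $\delta^\prime,\alpha^\prime$ here is the wrong move.

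Second, and more seriously, the proposed type~I configuration for (i) fails the interval condition whenever $h>1$. You take the $4231$ at $p<a<b<c$, which requires the region $\{q\mid b<q<c,\ \gamma<w(q)<w(p)\}$ to contain no points of the graph of $w$. But minimality of the height of $a<b<c<d$ forces each of $w^{-1}(\alpha-1),\ldots,w^{-1}(\delta+1)$ to lie strictly between $b$ and $c$ (any of them outside $(b,c)$ creates a $3412$ of smaller height), and these have values in $(\gamma,w(p))$. So for $h>1$ the forbidden region is nonempty and $p<a<b<c$ is \emph{not} an interval embedding of $4231$. The paper avoids this by taking the final index to be $w^{-1}(\alpha-1)$ rather than $c$, and the sets $\{j_r\}$, $\{k_s\}$ to be suitable decreasing chains rather than singletons, so that nothing intervenes. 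The analogous issue will afflict (vii). The remaining component cases (ii), (v), (viii), (xi) are deferred to a case analysis whose details you have not given; in the paper those cases (especially (ii) and (v)) are genuinely delicate, with branching on whether the $A_1$, $A_2$, $B$ regions of $a<b<c<d$ are empty and on whether $w^{-1}(\epsilon-1)>c$, and cannot be dispatched by the extremality of $p$ alone. As written, the proposal does not yet constitute a proof of those six items.
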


Most of this lemma and its proof is implicitly stated by Cortez,
scattered as parts of the proofs of various lemmas
in~\cite[Sect. 5]{Cor2}.  The empty regions are illustrated in
Figure~\ref{fig:lemonecompemptyregions}.

\begin{figure}[htbp]
\begin{center}

\input{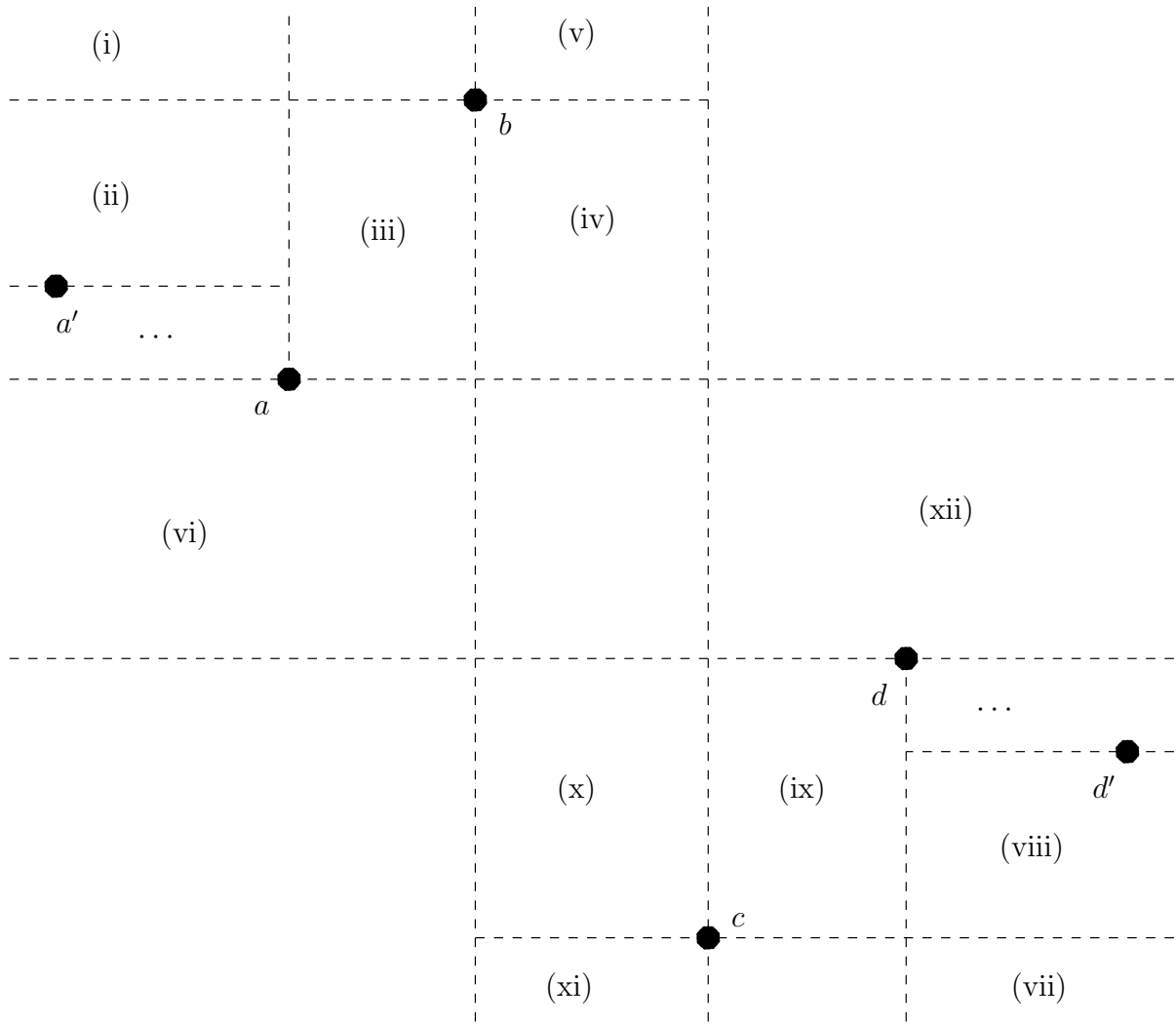}
\caption{The regions forced to be empty by Lemma~\ref{lem:onecompempty}.}
\label{fig:lemonecompemptyregions}
\end{center}
\end{figure}

\begin{proof}

If $p$ is in the set (vi), then $p<b<c<d$ is a $3412$ embedding with
height less than that of $a<b<c<d$.  If $p$ is in (iii) or (iv), then
$a<p<c<d$ is a $3412$ embedding of the same height but smaller
amplitude than $a<b<c<d$.  Similar arguments apply to (ix), (x), and
(xii).

Now we show that, if one of the other sets is nonempty, the singular
locus of $X_w$ must have at least two components.  Note that by the
emptiness of (iv), (vi), (x), and (xii) $a<b<c<d$ is a critical $3412$
embedding, and by the minimality of its height it must be reduced.

Suppose the set (v) is nonempty; let $p$ be the largest element of
(v).  Let $C=\{i\mid b<i<p, \delta<w(i)<\alpha\}$; if $C$ is nonempty,
then $i<p<c<d$ is a $3412$ embedding of smaller height than $a<b<c<d$
for any $i\in C$.  Now suppose $C$ is empty.  If the $A_2$ region
associated to $a<b<c<d$ is also empty, then $b<p<c<d$ is a reduced
critical $3412$ embedding.  The top critical region is empty by our
choice of $p$, the left critical region is empty by (iv) and the
emptiness of $C$, the bottom critical region is empty by (x), and the
right critical region is empty by (xii) and the emptiness of $A_2$;
furthermore it is reduced since $a<b<c<d$ is reduced.  Since $a\neq b$
and $b\neq p$, the components of the singular locus associated to
these critical $3412$ embeddings must be different, even if they are
of type I.  If $A_1$ or $B$ is empty, then $a<p<c<d$ is a reduced
critical $3412$ embedding.  The critical regions are empty by the
choice of $p$, the emptiness of $C$, and the emptiness of (iv), (vi),
(x), and (xii).  Since $b\neq p$, the only way the two critical $3412$
embeddings gave rise to the same component is for the component to be
a type I component using elements of both $A_1$ and $B$, but one of
these sets is empty in this case.  If $A_1$, $A_2$, and $B$ are all
nonempty, then the singular locus of $X_w$ must already have more than
one component.

Suppose (ii) is nonempty; let $e$ be the element of (ii) with the
smallest value of $\epsilon=w(e)$.  By the definition of
$\alpha^\prime$ and the emptiness of (iii) and (iv), either
$w^{-1}(\epsilon-1)>c$, or $\epsilon=\alpha^\prime+1$ and
$a^\prime<e<a$.

First we treat the case where $w^{-1}(\epsilon-1)>c$.  Let
$f=w^{-1}(\epsilon-1)$.  If $h>1$, then $e<b<w^{-1}(\alpha-1)<f$ is
a $3412$ embedding of height 1 and amplitude smaller than that of
$a<b<c<d$.  If $f>d$, then the same holds for $e<b<d<f$.  If $h=1$ and
$f<d$, then we have a type I component defined by $e$, $\{i\mid
e<i\leq a, \alpha\leq w(i)\leq\alpha^\prime\}$, which contains $a$, a
subset of $\{j\mid c<j<d, \alpha^\prime<w(j)<\epsilon\}$ that contains
$f$, and $d$.  This type I component cannot be the component of the
singular locus of $X_w$ associated to $a<b<c<d$, since $b\neq e$.

Now we treat the case where $\epsilon=\alpha^\prime+1$ and
$a^\prime<e<a$.  Let $i$ be the largest element of $\{i\mid a^\prime\leq
i<e, \alpha<w(i)\leq\alpha^\prime\}$.  Let $j$ be the smallest element
of $\{j\mid e<j\leq c, \gamma\leq w(j)<\delta\}$, a set which contains
$c$.  Let $k$ be the smallest element of $\{k\mid j<k\leq d,
w(j)<w(k)<w(i)\}$, a set which contains $d$.  Then $i<e<j<k$ is a
reduced critical $3412$ embedding.  The only portion of the critical
region not directly guaranteed empty by the definitions of $i$, $e$,
$j$, and $k$ is $\{m\mid e<m<j, \delta\leq w(m)<w(k)\}$; if $m$ is an
element of this set then $m<k<c<d$ is a $3412$ embedding of height
smaller than $a<b<c<d$.  Since $i\neq a$ and $e\neq b$, this must
produce a second component of the singular locus of $X_w$.  This shows
(ii) must be empty.

Suppose (i) is nonempty; let $e$ be the largest element of (i).  Then
the singular locus of $X_w$ has a type I component defined by $e$, a
set of which $a$ is the largest element, a set of which $b$ is the
largest element, and $w^{-1}(\alpha-1)$.

The proofs that (xi), (viii), and (vii) are empty are entirely
analogous to those for (v), (ii), and (i) respectively.

\end{proof}

For the following lemma, recall the definitions $M=\max\{p\mid p<c,
w(p)<\delta^\prime\}\cup \{a\}$ and $N=\max\{p\mid
w(p)<\delta^\prime\}$, given in Section~\ref{sect:not-covex}.

\begin{lemma}
\label{lem:MNempty}
Suppose the singular component of $X_w$ has only one component.  Then
\begin{enumerate}
\item[(i)] $a\leq M<b$.
\item[(ii)] $\{p\mid a<p<M, w(p)>\alpha^\prime\}$ is empty.
\item[(iii)] $c\leq N<d$.
\item[(iv)] $\{p\mid c<p<N, w(p)>\alpha^\prime\}$ is empty.
\end{enumerate}
\end{lemma}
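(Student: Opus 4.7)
The plan is to prove the four statements in order, with parts (i) and (iii) following quickly from the empty-region constraints of Lemma~\ref{lem:onecompempty}, and parts (ii) and (iv) being proved by constructing a second reduced critical $3412$ embedding distinct from $(a,b,c,d)$ and showing it contributes an additional irreducible component to the singular locus, contradicting the hypothesis.

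For (i), observe first that $w(b)=\beta>\alpha\geq\delta\geq\delta^\prime$, so $b$ cannot lie in the defining set of $M$; hence $M\neq b$. If $M>b$ then some $p\in(b,c)$ would satisfy $w(p)<\delta^\prime$, but parts (iv), (v), (x), (xi) of Lemma~\ref{lem:onecompempty} combine to force $w(q)\in[\delta,\alpha]$ for every $q\in(b,c)$, which is incompatible with $w(q)<\delta^\prime\leq\delta$. Part (iii) is the symmetric argument: first verify $\gamma<\delta^\prime$ (since $c<d$ precludes $\gamma$ from extending the increasing chain defining $\delta^\prime$), so $c\leq N$; if $N\geq d$ then some $p>d$ would have $w(p)<\delta^\prime$, but parts (vii) and (viii) collectively force $w(p)\geq\delta^\prime$ or $w(p)=\gamma$ (giving $p=c$, impossible).

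For (ii), suppose $p\in(a,M)$ with $w(p)>\alpha^\prime$. The inequality $\alpha^\prime<\beta$ (since $w^{-1}(\beta)=b>a$ prevents $\beta$ from extending the chain defining $\alpha^\prime$) combined with Lemma~\ref{lem:onecompempty}(iii) yields $w(p)>\beta$. I would then choose $p^*\in(a,M)$ minimizing $w(p^*)$ subject to $w(p^*)>\alpha^\prime$, and $M^*\in(p^*,M]$ maximizing $w(M^*)$ subject to $w(M^*)<\delta^\prime$. The quadruple $a<p^*<M^*<d$ is a $3412$ embedding of height $h$, and I would check that all four critical regions are empty: Top and Bottom vanish by Lemma~\ref{lem:onecompempty}(vi) on $(a,b)$ together with the minimality of $h$ (any value in $(\delta,\alpha)$ at a position in $(b,c)$ would yield a $3412$ embedding of height $<h$ through $(a,p^*,M^*,q)$ or $(q,p^*,M^*,d)$); Left is empty by the minimality of $w(p^*)$ combined with (iii); and Right is empty by the maximality of $w(M^*)$ combined with (vi). The $B$ region likewise vanishes, so the embedding is reduced and of type IIA. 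Since type II components are uniquely associated to a single reduced critical $3412$ embedding (Theorem~\ref{thm:singlocus}), the resulting component of $\mathrm{sing}(X_w)$ is distinct from the one associated to $(a,b,c,d)$, contradicting the single-component hypothesis.

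Part (iv) is entirely symmetric: assuming $p\in(c,N)$ with $w(p)>\alpha^\prime$, I would choose $p^*\in(c,N)$ minimizing $w(p^*)>\alpha^\prime$ and $N^*\in(p^*,N]$ maximizing $w(N^*)<\delta^\prime$, then verify that $(a,p^*,N^*,d)$ is a second reduced critical $3412$ embedding of type IIA, again contradicting one-componentness. The main obstacle in both (ii) and (iv) is the verification that the Top and Bottom regions of the constructed embedding are empty, which requires combining several parts of Lemma~\ref{lem:onecompempty} with the minimality of $h$ in a somewhat delicate way: one must simultaneously handle the subcases where $h=1$ (making the interval $(\delta,\alpha)$ automatically empty) and $h>1$ (where height reduction is invoked on the candidate entry in $(b,c)$). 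Correctly recognizing which of the minimization versus maximization criteria makes each of Left and Right empty is the key bookkeeping step.
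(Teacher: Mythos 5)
Your parts (i) and (iii) follow essentially the paper's route, except that for (i) citing (iv) and (v) of Lemma~\ref{lem:onecompempty} is superfluous --- (x) and (xi) alone give $w(p)\geq\delta\geq\delta^\prime$ for $b<p<c$, which is what you need.

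For parts (ii) and (iv) there is a genuine gap. Take your claimed reduced critical $3412$ embedding $a<p^*<M^*<d$ for part (ii). Its third critical region is $\{(p,q)\mid M^*<p<d,\ \delta<q<\alpha\}$, and since $M^*\leq M<b<c<d$, this region contains the whole strip $b<p<c$. When $h>1$ the points $(w^{-1}(\alpha-1),\alpha-1),\ldots,(w^{-1}(\delta+1),\delta+1)$ of the graph of $w$ lie in exactly that strip with values in $(\delta,\alpha)$, so the region is \emph{not} empty. Your assertion that ``Right is empty by the maximality of $w(M^*)$ combined with (vi)'' fails: Lemma~\ref{lem:onecompempty}(vi) only constrains positions $p<b$ and says nothing about $b<p<c$. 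You have the needed argument in hand --- a point $(p,w(p))$ with $b<p<c$ and $\delta<w(p)<\alpha$ does give the height-reducing embedding $a<p^*<M^*<p$ --- but you attach it to the Top and Bottom regions, which cannot involve positions in $(b,c)$ at all, since they are confined to $p^*<p<M^*\subseteq(a,b)$. So the argument as written is internally inconsistent, and the claim that $a<p^*<M^*<d$ is a critical embedding is simply false when $h>1$. Part (iv) has the symmetric problem: the first critical region of $a<p^*<N^*<d$ contains the strip $b<p<c$.

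The paper avoids all of this by not insisting that the constructed embedding stretch to $d$ and share the height $h$. Instead it takes $j=\max\{p\mid a<p<M,\ \alpha<w(p)\}$ and uses the adjacent index $j+1$ (which by definition of $j$, Lemma~\ref{lem:onecompempty}(vi), and $j+1\neq d^\prime$ satisfies $w(j+1)<\delta^\prime$) together with the adjacent value $\alpha-1$, forming the embedding $a<j<j+1<w^{-1}(\alpha-1)$. Because the two middle positions and the two outer values are consecutive, every critical region (and the $B$ region) is vacuous on its face, so no case analysis is needed; this is both shorter and immune to the $h>1$ obstruction you are running into. You could salvage your argument by splitting into $h=1$ (where $(\delta,\alpha)$ is empty and your construction is fine) and $h>1$ (where the existence of any $p^*\in(a,M)$ with $w(p^*)>\alpha^\prime$ already gives $a<p^*<M<w^{-1}(\alpha-1)$, a $3412$ of height $1<h$, contradicting minimality of $h$ without any second-component argument), but as submitted the proof does not go through.
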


This lemma is illustrated in Figure~\ref{fig:lemMNemptyregions}

\begin{figure}[htbp]
\begin{center}

\input{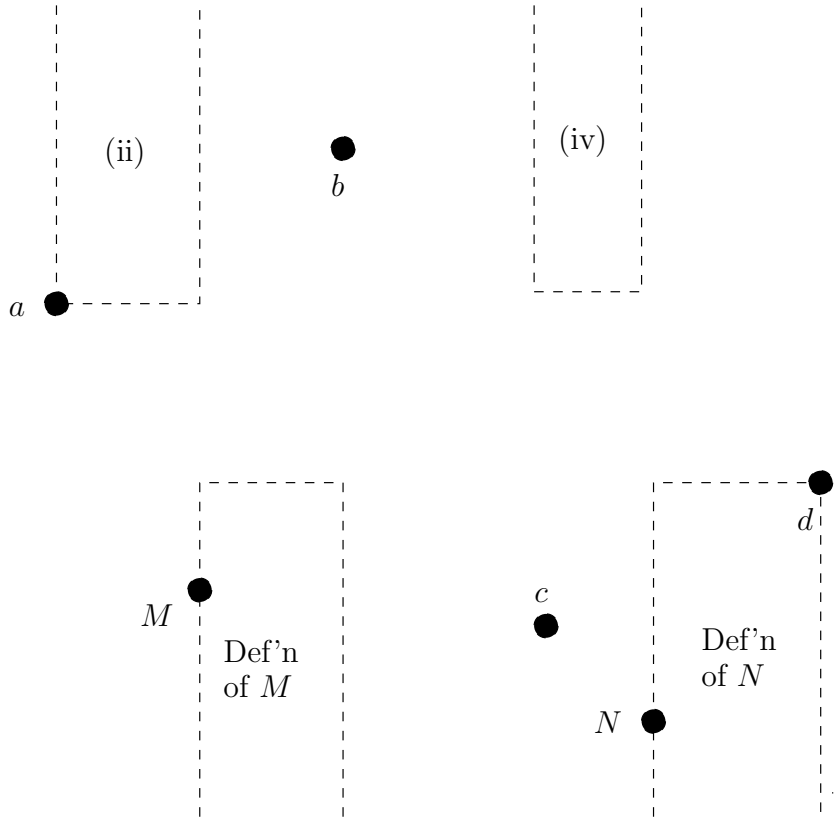}
\caption{The regions forced to be empty by Lemma~\ref{lem:MNempty}.}
\label{fig:lemMNemptyregions}
\end{center}
\end{figure}

\begin{proof}
We know that $a\leq M$ by definition, and $M<b$ by Lemma
\ref{lem:onecompempty} (x) and (xi).  Similarly, $c\leq N$ by
definition, and $N<d$ by Lemma \ref{lem:onecompempty} (vii) and (viii).

Now, assume for contradiction that $\{p\mid a<p<M,
w(p)>\alpha^\prime\}$ is nonempty.  Let $j=\max\{p\mid a<p<M,
\alpha<w(p)\}$.  By the definition of $j$ and Lemma
\ref{lem:onecompempty} (vi), $w(j+1)<\delta^\prime$.  Then
$a<j<j+1<w^{-1}(\alpha-1)$ is a reduced critical $3412$ embedding
defining a component of the singular locus in addition to the one
defined by $a<b<c<d$.

Similarly, suppose $\{p\mid c<p<N, w(p)>\alpha^\prime\}$ is nonempty.
Let $j=\max\{p\mid c<p<N, \alpha^\prime<w(p)\}$.  By the definition of
$j$ and Lemma \ref{lem:onecompempty} (xi), $w(j+1)<\delta^\prime$.
Then $w^{-1}(\delta+1)<j<j+1<d$ is a reduced critical $3412$ embedding
defining a component of the singular locus.
\end{proof}

We now proceed with the proof of the lemmas of
Section~\ref{sect:not-covex}, beginning with Lemma~\ref{lem:ressm}.

\begin{lemma:ressm}
Suppose the singular locus of $X_w$ has only one component and $w$
avoids $463152$.  Let $Z$ be constructed as above; then $Z$ is smooth.
\end{lemma:ressm}

\begin{proof}

Since $Z$ is a fibre bundle by the map $\pi_1$ over a smooth variety
(the Grassmannian) with fibre $X_v$, it is smooth if and only if $X_v$
is.

We show the contrapositive of our stated lemma by showing that, if
$X_v$ is not smooth and $w$ avoids $463152$, then the singular locus
of $X_w$ must have a component in addition to the one defined by the
reduced critical $3412$ embedding $a<b<c<d$.

Assume $X_v$ is singular.  We choose a component of its singular
locus.  This component has a combinatorial description as in
Section~\ref{sect:singlocusdesc}.

For convenience, we let $a_1=a^\prime=w^{-1}(\alpha^\prime)$,
$a_2=w^{-1}(\alpha^\prime-1)$, and so on with
$a_{\alpha^\prime-\alpha+1}=w^{-1}(\alpha)=a$.  Similarly, we let
$d_1=w^{-1}(\alpha-1)$, $d_2=w^{-1}(\alpha-2)$, and so on with $d_h=d$
and $d_{h+\delta-\delta^\prime}=d^\prime=w^{-1}(\delta^\prime)$.  We
also let $\mathcal{A}=\{a_1,\ldots,a_{\alpha^\prime-\alpha+1}\}$,
$\mathcal{D}_1=\{d_1,\ldots,d_{h-1}\}$,
$\mathcal{D}_2=\{d_h,\ldots,d_{h+\delta-\delta^\prime}\}$, and
$\mathcal{D}=\mathcal{D}_1\cup \mathcal{D}_2$.

First we handle the case where our chosen component of the singular
locus of $X_v$ is of type I.  If no index of the embedding into $v$
defining the component is in $\mathcal{A}$ or $\mathcal{D}$, then the
indices define an embedding of the same permutation into $w$, and the
sets required to be empty by the interval condition remain in exactly
the same positions.  The horizontal boundaries of these regions are
all above $\alpha^\prime$ or below $\delta^\prime$, so these regions
remain empty in $w$.  Therefore, the same embedding indices will
define a type I component of the singular locus of $X_w$.  This cannot
be the same as the component associated to the critical $3412$
embedding $a<b<c<d$; even if the component associated to $a<b<c<d$ is
of type I, it still must involve at least either $a$ or $d$, whereas
the component we just defined coming from the singular locus of $X_v$
involves neither.  Therefore, the singular locus of $X_w$ has at least
two components.

Now suppose our chosen type I component includes some index in
$\mathcal{A}$ or $\mathcal{D}$.  Let its defining embedding into $v$
be given by $i<j_1<\cdots<j_y<k_1<\cdots<k_z<m$.  Define the sets
$\mathcal{J}$ and $\mathcal{K}$ by $\mathcal{J}=\{j_1,\ldots,j_y\}$
and $\mathcal{K}=\{k_1,\ldots,k_z\}$.  We first show that one of
$\mathcal{A}$ and $\mathcal{D}$ contains no part of the embedding.  If
$a_r\in\mathcal{A}$ and $d_s\in\mathcal{D}$ are both in the embedding,
then since $a_r<d_s$ and $v(a_r)<v(d_s)$, $a_r\in\mathcal{J}$ and
$d_s\in\mathcal{K}$. Now we must have that $i<a_r$, and that
$v(i)>\alpha^\prime$, since, by definition, $v^{-1}(t)\in\mathcal{D}$
and hence $v^{-1}(t)>a_r$ whenever $d_s\leq t\leq\alpha^\prime$.
But then $i<a$ and $w(i)=v(i)>\alpha^\prime$, which is forbidden
by Lemma \ref{lem:onecompempty} (i) and (ii).

Therefore, we have two cases, one where $\mathcal{A}$ has some part of
our type I embedding but $\mathcal{D}$ does not, and one where
$\mathcal{D}$ has a part of our embedding but $\mathcal{A}$ does not.
We first tackle the case where $\mathcal{A}$ contains a part of the
embedding.  In this case, $i\in \mathcal{A}$, since otherwise $i<a$
and $w(i)>\alpha^\prime$, violating Lemma~\ref{lem:onecompempty} (i)
or (ii).  Having $i\in\mathcal{A}$ then implies that $m\not\in
\mathcal{A}$ and $\mathcal{J}\cap \mathcal{A}=\emptyset$ as follows.
First, we cannot have $m\in\mathcal{A}$ because, otherwise, any $r$
and $s$ satisfying $i<r<s<m$ would satisfy $v(r)>v(s)$, which
contradicts $i$ and $m$ being the first and last indices of a type I
embedding.  Second, $\mathcal{J}\cap\mathcal{A}$ must be empty
because, if $a_r\in\mathcal{A}$, $w(a_r)<w(k)<w(i)$ implies $i<k<a_r$ for
any $k$, contradicting $a_r\in\mathcal{J}$ for any type I embedding
starting with $i$.

We now have two subcases for the case where $\mathcal{A}$ has a part
of our type I embedding, depending on whether
$((\mathcal{K}\cup\{m\})\setminus\mathcal{A}$ contains an index less
than $b$.  If it does, then either $m<b$ or $k_s<b$ and
$w(k_s)<\delta^\prime$ for some $s$.  Either way, the forbidden region
for the type I embedding does not intersect $\{(p,q)\mid b<p,
\delta^\prime<q<\alpha^\prime\}$.  Therefore
$i<j_1<\ldots<j_y<k_1<\ldots<k_z<m$ defines a type I component of the
singular locus of $X_w$ as well as $X_v$.  The forbidden region may be
a little larger in $w$, but it does not acquire any points in the
graph of $w$.  This cannot be the same as any type I component of the
singular locus of $X_w$ associated to $a<b<c<d$ since both
$\mathcal{J}$ and $\mathcal{K}$ contain indices outside of the region
$B$ associated to $a<b<c<d$.

In the other case, since $m>b$, we must have $c\leq m<d$ by Lemma
\ref{lem:onecompempty} (x), (xi), (vii), and (viii).  One possibility
is that $c=m$.  In this case, taking the type I embedding in $v$ and
adding $\mathcal{D}_1$ to $\mathcal{K}$ gives a type I component of
the singular locus of $X_w$.  Both $\mathcal{J}$ and $\mathcal{K}$
contain indices outside of $B$, so this will also be a second
component of the singular locus of $X_w$.

If, on the other hand, $c\neq m$, then $c\in \mathcal{K}$ by the
following argument.  An example of this case is in
Figure~\ref{fig:typeIhardcase}.  By Lemma \ref{lem:onecompempty} (ix),
$v(m)=w(m)<\gamma$.  Furthermore, $j<c$ and $v(j)<\gamma$ for all
$j\in \mathcal{J}$ as follows.  If $j_r>c$ and $j_{r-1}<c$ (allowing
for $r=1$ in which case we define $j_0=i$), the forbidden region
$\{(p,q)\mid j_{r-1}<p<j_r, v(j_r)<q<v(i)\}$ for our type I embedding
contains $(c,\gamma)$ as $v(j_r)<\gamma$ by Lemma
\ref{lem:onecompempty} (ix).  If $v(j)\geq\gamma$ for some
$j\in\mathcal{J}$, then $v(k_z)>\gamma$, and hence $k_z<c$ by Lemma
\ref{lem:onecompempty} (ix); now the forbidden region $\{(p,q)\mid
k_z<p<m, v(m)<q<v(k_z)\}$ contains $(c,\gamma)$.

\begin{figure}[htbp]
\begin{center}

\input{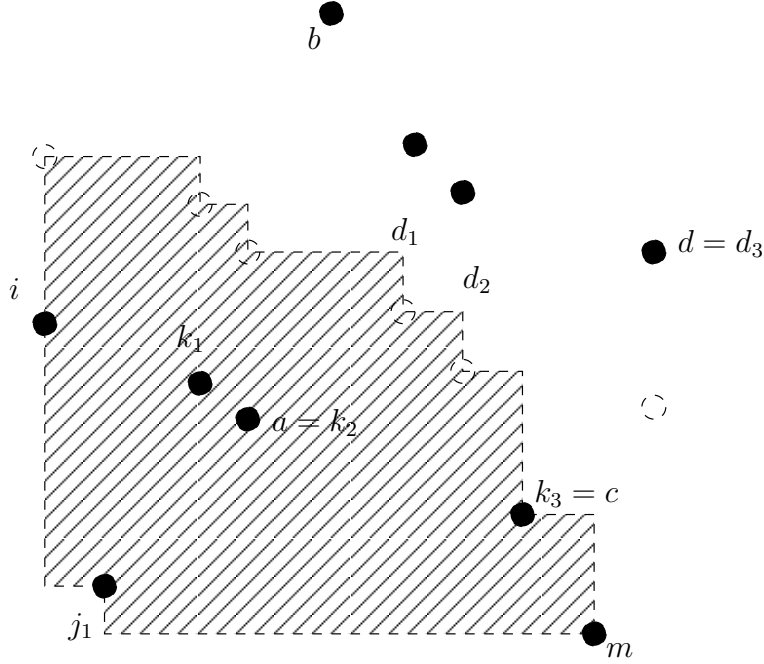}
\caption{The case of a type I configuration in $v$, using points in
$\mathcal{A}$, with $c<m<d$.  The hollow points are in $w$, and the
shaded region is the forbidden region of the associated configuration
in $w$.}
\label{fig:typeIhardcase}
\end{center}
\end{figure}

Recall that $(\mathcal{K}\setminus \mathcal{A})$ has no index less
than $b$ in the case under consideration.  Therefore, by Lemma
\ref{lem:onecompempty} (xi), no index $k\in \mathcal{K}$ satisfies
$k<c$, $v(k)<\gamma=v(c)$.  As $i<c<m$, $v(m)<\gamma<v(i)$, and $j<c$
and $v(j)<\gamma$ for all $j\in \mathcal{J}$, we must have $c\in
\mathcal{K}$ as otherwise $(c,\gamma)$ would be in a forbidden region.
Therefore, taking the type I embedding in $v$ and adding
$\mathcal{D}_1$ to $\mathcal{K}$ also gives a type I component of the
singular locus of $X_w$ distinct from any associated to $a<b<c<d$.

Now suppose $\mathcal{D}$ contains some part of the embedding but
$\mathcal{A}$ does not.  If $i\not\in \mathcal{D}$, then
$w(i)=v(i)>\alpha^\prime$, so by Lemma \ref{lem:onecompempty} (i) and
(ii), $i>a$.  If $i\in\mathcal{D}$, we also have $i>a$.  (Actually, we
cannot have $i\in \mathcal{D}$ but do not need this fact.)  Therefore,
$i<j_1<\cdots<j_y<k_1<\cdots<k_z<m$ also defines a type I component of
the singular locus of $X_w$, since, as the forbidden region does not
intersect $\{(p,q)\mid p\leq a, \delta^\prime<q<\alpha^\prime\}$,
no points of the graph of $w$ move into the forbidden region.  This
type I component can be the same as one associated to the critical
$3412$ embedding $a<b<c<d$, but only if $w$ has an embedding of
$463152$.

We have completed the case where our chosen component of the singular
locus of $X_v$ is of type I; now we move on to the case where it is of
type II.  Let $i<j<k<m$ be the reduced critical $3412$ embedding
associated to this component of the singular locus of $X_v$.  If none
of $i$, $j$, $k$, and $m$ are in $\mathcal{A}$ or $\mathcal{D}$, then
the critical regions are in the same place in both $v$ and $w$, and
they remain empty.  Therefore, they produce a component of the
singular locus of $X_w$ which must not be the same as the one
associated to $a<b<c<d$ as their reduced critical $3412$ embeddings
are different.

Now we first consider the case where $\mathcal{D}$ has a part of the
critical embedding but $\mathcal{A}$ does not.  If $i\in\mathcal{D}$,
then $i>a$, so the critical region as well as the regions $A$ and $B$
associated to $i<j<k<m$ do not intersect $\{(p,q)\mid p\leq a,
\delta^\prime\leq q\leq \alpha^\prime\}$, and $i<j<k<m$ is also a
reduced critical $3412$-embedding producing a type II component of the
singular locus of $X_w$.  If $j\in\mathcal{D}$, then $v(i)<k$, and,
since $i\not\in\mathcal{A}$ by assumption, $v(i)<\delta^\prime$.
Since $j>a$, we therefore also have that $\{(p,q)\mid p\leq
a,\delta^\prime\leq q\leq \alpha^\prime\}$ fails to intersect the
critical regions or the regions $A$ and $B$, and $i<j<k<m$ is a
critical $3412$ embedding producing a type II component of the
singular locus of $X_w$.  Otherwise, $i<d_1$ and $v(i)>\alpha^\prime$,
so by Lemma \ref{lem:onecompempty} (i) and (ii), $i>a$, implying that
$i<j<k<m$ produces a type II component of the singular locus of $X_w$.
Since $i<j<k<m$ is not $a<b<c<d$, we must have produced a second
component of the singular locus of $X_w$ in all of these cases.

Now suppose $\mathcal{A}$ has part of the critical embedding.  We
cannot have $k\in \mathcal{A}$ or $m\in \mathcal{A}$, since otherwise
we would have $i<j<a$ with $v(a)<v(i)<v(j)$, which forces
$j\not\in\mathcal{A}$.  Then $j<a$ and $w(j)=v(j)>\alpha^\prime$,
violating Lemma \ref{lem:onecompempty} (i) or (ii).  Therefore,
$j\in\mathcal{A}$ or $i\in\mathcal{A}$.

If $j\in \mathcal{A}$, then since $i<j$ and $v(i)<v(j)$,
$v(i)<\delta^\prime$, and so $v(k)<v(m)<\delta^\prime$.  Now if $k<b$,
$i<j<k<m$ is a reduced critical $3412$ embedding in $w$.  It may have
an element in its $A_2$ region in $w$ that when there is none in its
$A_2$ region in $v$, but in that case either the $B$ region is empty
or $w$ fails to avoid $463152$.  When $w$ avoids $463152$, $i<j<k<m$
produces a second type II component of the singular locus of $X_w$.
If $j\in \mathcal{A}$ and $k>b$, then by Lemma \ref{lem:onecompempty}
(x) and (xi), $k\geq c$.  Moreover, we cannot have $k=c$ as, in that
case, $c<m$ and $\gamma<v(m)=w(m)<\delta^\prime$, violating Lemma
\ref{lem:onecompempty} (viii) or (ix).  Therefore, $c<k<m$, and, as
$m<\delta^\prime$, $m<d$ by Lemma \ref{lem:onecompempty} (vii) and
(viii).  Since $j<c<k$, we now must have that $v(i)=w(i)>\gamma$ in
order for $i<j<k<m$ to be a critical $3412$ embedding in $v$.  If
$h=1$ and hence $\mathcal{D}_1$ is empty, then $i<j<k<m$ is a critical
$3412$ embedding in $w$ with $A$ or $B$ empty as they are in $v$.  If
$h>1$, let $i^\prime=\max\{p\mid p<b, \gamma<w(p)\leq w(i)\}$; this
set is nonempty because $i$ is an element.  Then
$i^\prime<d_{h-1}<k<m$ is a reduced critical embedding of $3412$ in
$w$, and the component of the singular locus of $X_w$ it produces,
whether it is type I or type II, must be different from the one
associated to $a<b<c<d$.  This last case is illustrated in
Figure~\ref{fig:typeIIhardcase}.

\begin{figure}[htbp]
\begin{center}

\input{typeIIhardcase.pstex_t}
\caption{The case of a type II configuration in $v$, using points in
$\mathcal{A}$, with $h>1$ and $k>b$.  The hollow points are in $w$,
and the shaded regions are the critical regions of the associated
$3412$ embedding in $w$.}
\label{fig:typeIIhardcase}
\end{center}
\end{figure}

Finally we tackle the case where $i\in\mathcal{A}$.  If $m<b$, then
$i<j<k<m$ is a reduced critical $3412$ embedding in $w$.  Otherwise,
$m\geq c$ by Lemma \ref{lem:onecompempty} (x) and (xi), and hence
$v(m)<\delta^\prime$.  If $k<c$, then $v(k)<\delta^\prime$, so $k\leq
M$ by definition.  We then have $j<M$ with $v(j)>\alpha^\prime$, which
is forbidden by Lemma \ref{lem:onecompempty} (i), (ii), and (iii) and
Lemma \ref{lem:MNempty} (ii).  We cannot have $k=c$ since in that case
$\gamma<w(m)=v(m)<\delta^\prime$ and $m>c$, violating Lemma
\ref{lem:onecompempty} (viii) or (ix).  If $k>c$ then we have
$c<k<m<d$.  In this case $a<b<k<m$ is a critical $3412$ embedding in
$w$.  In particular, $\{p\mid b<p<k, \alpha<w(p)<\beta\}$ is empty by
Lemma \ref{lem:onecompempty} (iv) and Lemma \ref{lem:MNempty} (iv).
Since $k\neq c$ and $m\neq d$, the associated component of the
singular locus of $X_w$ must be different from the component
associated to $a<b<c<d$.

We have now shown that, unless $463152$ embeds in $w$, no matter what
singularity $X_v$ may have, it must produce a second component of the
singular locus of $X_w$, either directly or through the use of Lemma
\ref{lem:onecompempty} or Lemma \ref{lem:MNempty}.  Therefore, if the
singular locus of $X_w$ has only one component and $w$ avoids
$463152$, $X_v$, and hence $Z$, is nonsingular.

\end{proof}

Now we continue on to proving the lemmas of Section~\ref{sect:nonvexfibers}.

\begin{lemma:fibgeom}
The fiber of $\pi_2$ over a flag $F_\bullet$ is $$\{G\in
Gr_\kappa(\mathbb{C}^n) \mid E_{\delta^\prime-1} + F_M \subseteq G
\subseteq E_{\alpha^\prime} \cap F_N\}.$$
\end{lemma:fibgeom}

\begin{proof}

By definition of $Z$, $E_{\delta^\prime-1} \subseteq G \subseteq
E_{\alpha^\prime}$.  We need to show that $F_M\subseteq G$, that
$G\subseteq F_N$, and that any such subspace $G$ is in
$\pi_2^{-1}(F_\bullet)$.

To show that $F_M\subseteq G$, we show that $r_v(M,\kappa)=M$.  This is
equivalent to showing that $\{p\mid p\leq M, v(p)>\kappa\}$ is
empty, which is in turn equivalent to showing that $\{p\mid
p\leq M, \delta^\prime-1<w(p)<\alpha\}$ and $\{p\mid p\leq
M, \alpha^\prime<w(p)\}$ are both empty.  The first follows from
Lemma \ref{lem:onecompempty} (vi) since $M<b$ by Lemma
\ref{lem:MNempty} (i).  The second follows from Lemma
\ref{lem:onecompempty} (i) and (ii) and Lemma \ref{lem:MNempty} (ii).

Now we show $G\subseteq F_N$.  This means showing that $r_v(N,\kappa)=\kappa$,
or that $\{p\mid p>N, v(p)\leq \kappa\}$ is empty.  This is
equivalent to showing that $\{p\mid p>N,
w(p)<\delta^\prime\}$ and $\{p\mid p>N, \alpha\leq
w(p)\leq \alpha^\prime\}$ are both empty.  The first is empty by
the definition of $N$, and the second is empty by the definition of
$\alpha^\prime$.

To show that any $G$ satisfying $E_{\delta^\prime-1} + F_M \subseteq G
\subseteq E_{\alpha^\prime} \cap F_N$ is in $\pi_2^{-1}(F_\bullet)$,
we need to show that $\dim(G \cap F_j) \geq r_v(j,\kappa)$ for any $j$ with
$M<j<N$.  It suffices to show that $r_v(j,\kappa)=r_v(M,\kappa)=M$ when $M<j<c$,
and that $r_v(j,\kappa)=r_v(j-1,\kappa)+1$ when $c\leq j\leq N$.  Equivalently,
this means that $v(j) > \kappa$ when $M<j<c$ and $v(j) \leq \kappa$ when $c\leq
j\leq N$.

Since $v(j)\leq \kappa$ if and only if $w(j)<\delta^\prime$ or $\alpha\leq
w(j)\leq\alpha^\prime$, the first condition is clear from the
definition of $M$.  We also have that $N<d$ by Lemma
\ref{lem:MNempty} (iii), so we need that
$\{p\mid c<p<N, w(p)>\alpha^\prime\}$ is empty, which follows
from Lemma \ref{lem:MNempty} (iv).  Therefore, $r_v(j,\kappa)=r_v(j-1,\kappa)+1$
when $c\leq j\leq N$, and any $G$ satisfying $E_{\delta^\prime-1} +
F_M \subseteq G \subseteq E_{\alpha^\prime} \cap F_N$ is in
$\pi_2^{-1}(F_\bullet)$.

\end{proof}
\pagebreak
\begin{lemma:fiblb}
Suppose that the singular locus of $X_w$ has only one component and
$w$ avoids $546213$.  Then $\dim(E_{\delta^\prime-1} + E_M)=\kappa-1$.
\end{lemma:fiblb}

\begin{proof}

Since $r_v(M,\kappa)=M$, $c>M$, and $v(c)<\kappa$, $M=r_v(M,\kappa)+1\leq
r_v(c,\kappa)\leq \kappa$, so $M\leq \kappa-1$.  If $\alpha=\alpha^\prime$, then
$\delta^\prime=\alpha^\prime-\alpha+\delta^\prime=\kappa$, so
$\delta^\prime-1=\kappa-1$.  Otherwise, we need to show that $M=\kappa-1$.
Since $M\geq a$, so that $\{p\mid p>M,
\alpha\leq p\leq\alpha^\prime\}$ is empty, this is equivalent to
showing that $\{p\mid p>M,w(p)<\delta^\prime\}$ has only one
element, namely $c$.

By the definition of $M$, $\{p\mid M<p<c,
w(p)<\delta^\prime\}$ is empty.  Furthermore, by Lemma
\ref{lem:onecompempty} (vii), (viii), and (ix),
$\{p\mid p>d, w(p)<\gamma\}$, $\{p\mid p<d,
\gamma<w(p)<\delta^\prime\}$, and $\{p\mid c<p<d,
\gamma<w(p)<\delta\}$ are empty.  This leaves $\{p\mid c<p<d,
w(p)<\gamma\}$, which is empty since $\alpha^\prime\neq\alpha$ and
$w$ avoids $546213$.
\end{proof}

\begin{lemma:fibub}
Suppose that the singular locus of $X_w$ has only one component and
$w$ avoids $465132$.  Then $\dim(E_{\alpha^\prime} \cap E_N)=\kappa+h$.
\end{lemma:fibub}

\begin{proof}

First, note that $N\geq \kappa+h$, since $N=\#\{p\mid p<N,
v(p)\leq \kappa\}+\#\{p\mid p<N, v(p)>\kappa\}$, and the first
summand is $r_v(N,\kappa)=\kappa$, while the second summand is at least $h$
since the $h$ elements $b,w^{-1}(\alpha-1),\ldots,w^{-1}(\delta+1)$
are in the set.  If $\delta^\prime=\delta$, then
$\alpha^\prime=\kappa+\alpha-\delta=\kappa+h$.  Otherwise, we need to show that
$N=\kappa+h$.  This means showing that $\{p\mid p<N, v(p)>\kappa\}$ has
exactly $h$ elements, or, equivalently, that $\{p\mid p<N,
w(p)>\alpha^\prime\}$ contains only $b$.

We know that $\{p\mid c<p<N, w(p)>\alpha^\prime\}$ is empty
by Lemma \ref{lem:MNempty} (iv), and, by Lemma \ref{lem:onecompempty}
(i), (ii), and (iii), $\{p\mid p<a, w(p)>\beta\}$,
$\{p\mid p<a, \alpha^\prime<w(p)<\beta\}$, and $\{p\mid
a<p<b, \alpha<w(p)<\beta\}$ are empty.  This leaves $\{p\mid
a<p<b, w(p)>\beta\}$, which is empty since
$\delta^\prime\neq\delta$ and $w$ avoids $465132$.

\end{proof}

\begin{lemma:excgeom}
Suppose the singular locus of $X_w$ has only one component, and $h>1$.
Then the image of the exceptional locus of $\pi_2$ is
$$\{F_\bullet\mid\dim(E_{\delta^\prime-1}\cap
F_M)>r_w(M,\delta^\prime-1)\}.$$
\end{lemma:excgeom}

\begin{proof}

First we show that $r_w(N,\alpha^\prime)=\kappa+h-1$.  By the definition of
$N$, $r_v(N,\kappa)=\kappa$, so the two sets $\{p\mid p<N,
w(p)<\delta^\prime\}$ and $\{p\mid p<N, \alpha\leq
w(p)\leq \alpha^\prime\}$ have $\kappa$ elements combined.  Since $c<N$
by definition and $N<d$ by Lemma \ref{lem:onecompempty} (vii) and
(viii), $\{p\mid p<N, \delta^\prime\leq w(p)<\alpha\}$ has
precisely the $h-1$ elements
$w^{-1}(\alpha-1),\ldots,w^{-1}(\delta+1)$.  Therefore,
$\dim(E_{\alpha^\prime}\cap F_N)=r_w(N,\alpha^\prime)=\kappa+h-1$ generically.

Now we calculate $\dim(E_{\delta^\prime-1}+F_M)$.  Note that
\begin{equation*}
\dim(E_{\delta^\prime-1}+F_M)=\delta^\prime-1+M -\dim(E_{\delta^\prime-1}\cap F_M).
\end{equation*}
Generically,
$$\dim(E_{\delta^\prime-1}\cap F_M)=r_w(M,\delta^\prime-1),$$
and 
\begin{eqnarray*}
r_w(M,\delta^\prime-1)
& = & r_v(M,\kappa)-\#\{p\mid p<M, \alpha\leq w(p)\leq \alpha^\prime\} \\
& = & M-(\alpha^\prime-\alpha+1).
\end{eqnarray*}
Therefore, generically,
\begin{eqnarray*}
\dim(E_{\delta^\prime-1}+F_M)
& = & \delta^\prime-1+M-M+\alpha^\prime-\alpha+1 \\
& = & \delta^\prime+\alpha^\prime-\alpha \\
& = & \kappa.
\end{eqnarray*}

Recall that, by Lemma~\ref{lem:fibgeom}, the fiber over a flag $F_\bullet$ is
$$\{G\in Gr_\kappa(\mathbb{C}^n) \mid E_{\delta^\prime-1} + F_M
\subseteq G \subseteq E_{\alpha^\prime} \cap F_N\}.$$ Therefore, since
$\dim G=\kappa$, the fiber over $F_\bullet$ consists of the single
point corresponding to the subspace $E_{\delta^\prime-1}+F_M$
generically; here, the generic situation occurs whenever
$\dim(E_{\delta^\prime-1}\cap F_M)=r_w(M,\delta^\prime-1)$.  When
$h=1$, we also have that $\dim(E_{\alpha^\prime}\cap F_N)=\kappa$ in
the generic situation, and we also need $\dim(E_{\alpha^\prime}\cap
F_N)>\kappa$ in order for the fiber over $F_\bullet$ to consist of
more than a point.  However, when $h>1$, $\pi^{-1}(F_\bullet)$ has
more than one point whenever $\dim(E_{\delta^\prime-1}\cap
F_M)>r_w(M,\delta^\prime-1)$, so the image of the exceptional locus is
$$\{F_\bullet\mid\dim(E_{\delta^\prime-1}\cap
F_M)>r_w(M,\delta^\prime-1)\},$$ as desired.
\end{proof}

Recall that $u$ is defined by $u=\sigma w$, where $\sigma\in S_n$ is
the cycle $(\gamma, \delta+1, \delta+2, \ldots, \alpha)$.

\begin{lemma:exccomb}
Assume that $h>1$ and $w$ avoids $526413$.  Then the image of the
exceptional locus of $\pi_2$ is $X_u$, $\ell(w)-\ell(u)=h$, and the
generic fiber over $X_u$ is isomorphic to $\mathbb{P}^{h-1}$.
\end{lemma:exccomb}

\begin{proof}

Suppose $F_\bullet$ is in the image of the exceptional locus, and let
$X^\circ_x$ be the Schubert cell containing $F_\bullet$.  Our strategy
is to show using rank matrices that $x\leq u$.  As part of this proof,
we show that a certain region of the graph of $w$ is empty, which will
imply that $\ell(w)-\ell(u)=h$.

First we compare the rank matrices $r_u$ and $r_w$.  Let $R_1$ denote
the region $\{(p,q)\mid w^{-1}(\delta+1)\leq p<c, \gamma\leq
q<\delta+1\}$ and $R_i=\{(p,q)\mid w^{-1}(\delta+i-1)\leq
p<w^{-1}(\delta+i), \gamma\leq q<\delta+i\}$ when $1<i\leq h$.  Since
$u=t_h\cdots t_1 w$ where $t_1=(\gamma,\delta+1)$ and
$t_i=(\delta+i-1,\delta+i)$ when $1<i\leq h$, we get that
$r_u(p,q)=r_w(p,q)+1$ if $(p,q)$ is in $R_i$ for some $i$, and
$r_u(p,q)=r_w(p,q)$ otherwise.  Let $R$ denote the union
$R=\bigcup_{i=1}^h R_i$.  The region $R$ is drawn in
Figure~\ref{fig:exccombregion}.

\begin{figure}[htbp]
\begin{center}

\input{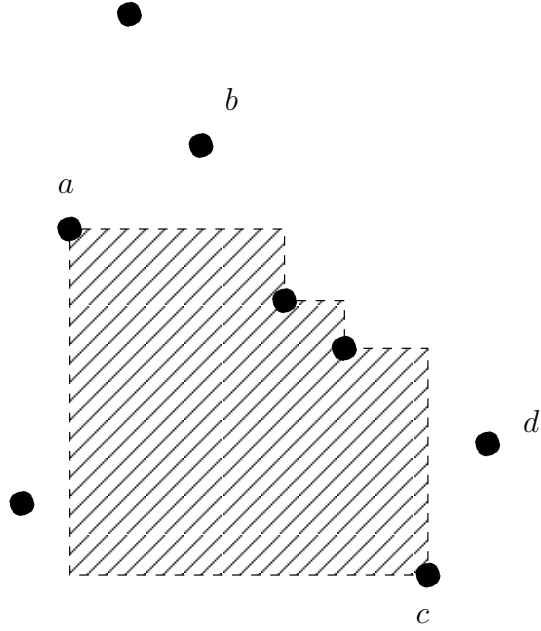}
\caption{The region $R$ ``between'' $u$ and $w$.}
\label{fig:exccombregion}
\end{center}
\end{figure}

Now we show that, when $(p,q)\in R$, then $r_w(p,q)$ is as small as
possible given that $r_w(M,\delta^\prime-1)=M$ and given that, as for
any permutation and any $p$ and $q$, $0\leq r_w(p+1,q)-r_w(p,q)\leq 1$
and $0\leq r_w(p,q+1)-r_w(p,q)\leq 1$.  To be precise, this means
that, assuming $(p,q)$ and $(p,q+1)$ are both in $R$,
$r_w(p,q)=r_w(p,q+1)-1$ if $\gamma\leq q<\delta^\prime$ and
$r_w(p,q)=r_w(p,q+1)$ otherwise, and, assuming $(p,q)$ and $(p+1,q)$
are both in $R$, $r_w(p,q)=r_w(p+1,q)-1$ if $a\leq p<M$ with
$r_w(p,q)=r_w(p+1,q)$ otherwise.

To prove the above claim, we need to show that $R$
contains no point in the graph of $w$, that $w(p)<\gamma$ when $a<p\leq
M$, and that $w^{-1}(q)<a$ when $\gamma<q\leq\delta^\prime-1$.  Since
$h>1$ and $w$ avoids $526413$, $\{p\mid a<p<b,
\gamma<w(p)<\delta^\prime\}$.  Also, $\{p\mid a<p<b, \delta^\prime\leq
w(p)<\alpha\}$ and $\{p\mid b\leq p<c, \gamma<w(p)<\delta^\prime\}$
are empty by Lemma \ref{lem:onecompempty} (vi) and (x).  The remaining
portion of $R$ contains no point in the graph of $w$ since, as
$a<b<c<d$ is a $3412$ embedding of minimal height,
$b<w^{-1}(\alpha-1)<\cdots<w^{-1}(\delta+1)<c$ by $a<b<c<d$.
Furthermore, using that $R$ contains no point of the
graph of $w$, $w(p)<\gamma$ when $a<p\leq M$ by Lemma
\ref{lem:MNempty} (ii), and $w^{-1}(q)<a$ when
$\gamma<q\leq\delta^\prime-1$ by Lemma \ref{lem:onecompempty} (viii)
and (ix).

Suppose $X^\circ_x$ is in the image of the exceptional locus, so that
$x\leq w$ and $r_x(M,\delta^\prime-1)\geq r_w(M,\delta^\prime-1)+1$.
We show that $r_x(p,q)\geq r_u(p,q)$ for all $p$ and $q$.  If $(p,q)$
is not in $R$, this follows since $x\leq w$.  For $(p,q)\in R$,
$r_w(p,q)$ is the minimum possible given that
$r_w(M,\delta^\prime-1)=M$.  Since
$r_x(M,\delta^\prime-1)>r_w(M,\delta^\prime-1)$, it follows that
$r_x(p,q)>r_w(p,q)$ for $(p,q)\in R$.  Therefore, $r_x(p,q)\geq
r_u(p,q)$ when $(p,q)$ is in $R$, and $x\leq u$.

Since the regions $R_i$ are empty, multiplication by each
transposition $t_1$,\ldots, $t_h$ decreases the length of $w$ by $1$,
and $\ell(u)=\ell(w)-h$.

Since $u(p)\leq\alpha^\prime$ if and only if $w(p)\leq\alpha^\prime$,
$r_u(N,\alpha^\prime)=r_w(N,\alpha^\prime)=\kappa+h-1$.  Therefore,
$\dim(E_\alpha^\prime\cap F_N)=\kappa+h-1$ for $F_\bullet\in
X^\circ_u$.  Moreover, for $F_\bullet\in X^\circ_u$,
\begin{eqnarray*}
\dim(E_{\delta^\prime-1}+F_M)
& = & \delta^\prime-1+M-r_u(M,\delta^\prime-1) \\
& = & \delta^\prime-1+M-M+\alpha^\prime-\alpha \\
& = & \kappa-1.
\end{eqnarray*}
Therefore, the generic fiber over $X_u$ is isomorphic to
$\mathbb{P}^{h-1}$. 
\end{proof}

\appendix
\section{A Purely Pattern Avoidance Characterization \\ (by Sara Billey and Jonathan Weed)}

Let $\Sym_\infty$ be the union of $\Sym_n$ for all $n \geq 1$.  There
exists a partial order on $\Sym_\infty$ determined by pattern
embeddings; we say $v \prec w$ if there is a pattern embedding of $v$
into $w$.  If the embedding of $v$ into $w$ is given by the set of
indices $Z = \{i_1, \ldots, i_m\}$, then we write $fl_Z(w) = v$, i.e.,
that the ``flattened'' version of $w$ consisting only of the indices
in $Z$ is the permutation $v$. 

Consider the set $\KL{m} = \{ w \in \Sym_\infty \, | \, P_{id, w}(1)
\leq m\}$ for any positive integer $m$.  By \cite[Thm. 1]{BilBra}, we
know $\KL{m}$ is the complement of a lower order ideal in the poset of
pattern embeddings.  Therefore, $\KL{m}$ can be characterized by
pattern avoidance for every $m\geq 1$.  For example, $\KL{1}$ is the
set of permutations avoiding $4231$ and $3412$.  The following theorem
gives a minimal set of patterns characerizing $\KL{2}$.

\begin{theorem}\label{t:kl2} $\KL{2}$ is equal to the set of permutations avoiding the 66 patterns

\begin{xalignat}{4}
&4 5 1 2 3 && 3 4 5 1 2 && 5 3 4 1 2 && 5 2 3 4 1 \nonumber \\
&4 5 2 3 1 && 3 5 1 6 2 4 && 5 2 3 6 1 4 && 5 2 6 3 1 4 \nonumber \\
&6 2 4 1 5 3 && 5 2 4 6 1 3 && 4 6 2 5 1 3 && 5 2 6 4 1 3 \nonumber \\ 
&5 4 6 2 1 3 && 3 6 1 4 5 2 && 4 6 1 3 5 2 && 3 6 4 1 5 2 \nonumber \\
&4 6 3 1 5 2 && 5 3 6 1 4 2 && 4 6 5 1 3 2 && 4 2 6 3 5 1 \nonumber \\
&6 3 2 5 4 1 && 6 3 5 2 4 1 && 6 4 2 5 3 1 && 6 5 3 4 2 1 \nonumber \\
&3 6 1 2 7 4 5 && 6 2 3 1 7 4 5 && 6 2 4 1 7 3 5 && 3 4 1 6 7 2 5 \nonumber \\
&4 2 3 6 7 1 5 && 4 2 6 3 7 1 5 && 4 2 6 7 3 1 5 && 3 7 1 2 5 6 4 \nonumber \\
&7 2 3 1 5 6 4 && 3 7 1 5 2 6 4 && 3 7 5 1 2 6 4 && 7 5 2 3 1 6 4 \label{eqn:66pats}\\
&6 2 5 1 7 3 4 && 7 2 6 1 4 5 3 && 3 4 1 7 5 6 2 && 3 5 1 7 4 6 2 \nonumber \\
&4 5 1 7 3 6 2 && 4 2 3 7 5 6 1 && 5 3 4 7 2 6 1 && 4 2 7 5 6 3 1 \nonumber \\
&3 4 1 2 7 8 5 6 && 4 2 3 1 7 8 5 6 && 3 4 1 7 2 8 5 6 && 4 2 3 7 1 8 5 6 \nonumber \\
&4 2 7 3 1 8 5 6 && 3 5 1 2 7 8 4 6 && 5 2 3 1 7 8 4 6 && 5 2 4 1 7 8 3 6 \nonumber \\
&3 4 1 2 8 6 7 5 && 4 2 3 1 8 6 7 5 && 3 4 1 8 2 6 7 5 && 4 2 3 8 1 6 7 5 \nonumber \\
&4 2 8 3 1 6 7 5 && 3 4 1 8 6 2 7 5 && 4 2 3 8 6 1 7 5 && 4 2 8 6 3 1 7 5 \nonumber \\
&3 5 1 2 8 6 7 4 && 5 2 3 1 8 6 7 4 && 3 6 1 2 8 5 7 4 && 6 2 3 1 8 5 7 4 \nonumber \\
&5 2 4 1 8 6 7 3 && 6 2 5 1 8 4 7 3. \nonumber
\end{xalignat}

\label{thm:pats}
\end{theorem}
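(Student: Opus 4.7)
The plan splits into the two directions of the claimed pattern-avoidance characterization. For the forward direction---that $\KL{2}$ is contained in the permutations avoiding all 66 listed patterns---I would verify by direct computation of Kazhdan-Lusztig polynomials (e.g., using Warrington's software \cite{WarKLPOL}) that $P_{id,w}(1) \geq 3$ for each of the 66 listed $w$. By \cite[Thm.~1]{BilBra}, the set $\KL{2}$ is downward closed under pattern containment, so containing any of these patterns forces $w \notin \KL{2}$.

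For the reverse direction, assume $w$ avoids all 66 patterns. Since the six patterns $653421$, $632541$, $463152$, $526413$, $546213$, $465132$ of Theorem~\ref{thm:main} all appear in the list, $w$ avoids these as well, so by Theorem~\ref{thm:main} it suffices to show that either $X_w$ is smooth or the singular locus of $X_w$ has exactly one irreducible component. Contrapositively, one must show: if the singular locus of $X_w$ has at least two irreducible components, then $w$ contains one of the remaining 60 patterns on the list.

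To establish this reduction, I would use the combinatorial description in Theorem~\ref{thm:singlocus}: each irreducible component of the singular locus is witnessed by an interval embedding of type I, IIA, or IIB involving a bounded number of \emph{essential} indices of $w$. Given two distinct components, the union of their essential indices has size at most eight, so the flattening of $w$ on that union is a permutation $w'$ of size at most eight whose Schubert variety also has at least two singular components. Proceeding by case analysis on the unordered pair of types (six possibilities from $\{I, IIA, IIB\}$) and on the overlap pattern of the two index sets, one identifies the minimal witness in each case and verifies that it contains one of the 60 non-critical patterns. Minimality of each of the 60 patterns---the statement that no proper sub-pattern has $P_{id,w}(1) \geq 3$---is again verified computationally.

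The main obstacle is the case analysis: there are many ways two index sets can overlap, and each case must be reduced to a minimal witnessing permutation and matched against the list. In practice the cleanest verification is an exhaustive computer enumeration of all permutations of length at most eight with $P_{id,w}(1) \geq 3$ that are minimal under pattern containment, confirming the list is both correct and complete. A theoretical bound justifying the length-eight cutoff follows from minimizing the variable parameters ($y$, $z$, and the sizes of the $A$ and $B$ regions) in the type I and type II interval descriptions of Theorem~\ref{thm:singlocus} under the constraint that two components must coexist while the six critical patterns are avoided.
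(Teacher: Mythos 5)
Your overall architecture matches the paper's: the forward direction is a finite computation plus the downward-closure result of \cite[Thm.~1]{BilBra}, and the reverse direction combines Theorem~\ref{thm:main} with a reduction of the two-component case to patterns of bounded length followed by a computer search. The one genuine gap is in the justification of the length-eight cutoff, which is the only nontrivial theoretical content of the reverse direction.

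You assert that the union of the ``essential indices'' of two singular components has size at most eight, but the interval embeddings of Theorem~\ref{thm:singlocus} involve $y+z+2$, $y+z+4$, or $y+4$ indices, which are unbounded; your closing suggestion of ``minimizing the variable parameters $y$, $z$'' does not by itself establish that, after deleting the extra $j_r$'s and $k_s$'s and flattening, one still has \emph{two distinct} irreducible singular components. That preservation statement is exactly what requires an argument. The paper obtains the bound via a different characterization of singular components: Lemma~\ref{lem:num-sing-comps} (from \cite[Sec.~13]{BilWar}) identifies each element of $\ms(w)$ with an occurrence of a $4231$ or $3412$ pattern on exactly four indices, constrained only by non-containment in thirteen fixed ``dotted'' patterns. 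With that in hand, Lemma~\ref{thm:me} is almost immediate: the union $Z$ of the two four-index witnesses has size at most eight, and since non-containment in the dotted patterns passes from $\fl_Z(w)$ to $w$, both witnesses remain valid in $\fl_Z(w)$. Your proposal would need either to cite this characterization or to re-derive an equivalent bounded-witness statement directly from the interval description of Theorem~\ref{thm:singlocus}; as written, the ``minimization'' step is a plausibility heuristic rather than a proof.

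A secondary point: once you have the length-eight reduction, your detailed ``case analysis on the unordered pair of types and on the overlap pattern'' is unnecessary extra structure --- the paper simply enumerates all permutations in $S_{\leq 8}$ with $|\ms(v)| \geq 2$ by computer and checks that the minimal ones under pattern containment are exactly those in the list of sixty-six (together with the six patterns of Theorem~\ref{thm:main}, which appear in the list). Your suggestion of ``exhaustive computer enumeration of all permutations of length at most eight with $P_{id,w}(1) \geq 3$ that are minimal under pattern containment'' is the cleanest route and is what the paper does, but it is a valid proof only after the length-eight reduction is rigorously in place.
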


Given $w \in \Sym_n$, the irreducible components of the singular locus
of the Schubert variety $X_w$ are themselves Schubert varieties. The
set of permutations indexing these irreducible components is called
the \textit{maximal singular locus}, and is denoted by $\ms(w)$.  The
proof of Theorem~\ref{t:kl2} follows from the next two lemmas relating
the maximal singular locus of a Schubert variety with patterns.  
\pagebreak
\begin{lemma} \cite[Sec. 13]{BilWar}
\label{lem:num-sing-comps} Consider a set $Z$ such that $fl_Z(w) =
4231$ or $3412$. Then $Z$ corresponds to a unique element of $\ms(w)$
if and only if the pattern does not occur as the dotted part of one of
the following patterns:
\begin{center}
\begin{equation}
\begin{tabular}{l l l}
$\dot{3}\dot{5}4\dot{1}\dot{2}$ & $\dot{4}3\dot{5}\dot{1}\dot{2}$ & $\dot{4}\dot{5}\dot{1}3\dot{2}$ \\
$\dot{4}\dot{5}2\dot{1}\dot{3}$ & $\dot{5}\dot{2}3\dot{4}\dot{1}$ & $\dot{5}\dot{2}\dot{4}3\dot{1}$ \\
$\dot{5}3\dot{2}\dot{4}\dot{1}$ & $\dot{5}\dot{3}\dot{4}2\dot{1}$ & $\dot{5}4\dot{2}\dot{3}\dot{1}$ \\
$\dot{6}\dot{3}52\dot{4}\dot{1}$ & $\dot{5}\dot{6}34\dot{1}\dot{2}$ & $\dot{5}2\dot{6}4\dot{1}\dot{3}$ 
\\$\dot{4}\dot{6}3\dot{1}5\dot{2}$.
\end{tabular}
\label{eqn:useless}
\end{equation}
\end{center}
\end{lemma}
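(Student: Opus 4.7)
The plan is to prove the biconditional by a case-by-case analysis grounded in the combinatorial description of $\ms(w)$ recalled in Section \ref{sect:singlocusdesc}. Every element of $\ms(w)$ corresponds either to a reduced critical $3412$ embedding in $w$ (yielding a type II component when the $A$ or $B$ region is empty) or to a type I configuration $(i,\mathcal{J},\mathcal{K},m)$ satisfying the prescribed region-emptiness conditions. There is a natural map from $3412$ and $4231$ pattern embeddings $Z$ in $w$ to elements of $\ms(w)$: a $4231$ embedding (a type I base with $y=z=1$) extends to its unique maximal type I configuration, while a reduced critical $3412$ embedding determines its associated type II or type I component(s). The lemma asserts that this map is defined and injective at $Z$ precisely when none of the 13 extensions listed in (\ref{eqn:useless}) occurs in $w$.

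First I would identify all ways uniqueness can fail: either $Z$ extends to no valid configuration (for example, a $3412$ with an element in one of its four critical regions, or a $4231$ with an element in its middle region), or $Z$ extends to two or more configurations (for example, when the $3412$ embedding $Z$ is critical but not reduced, as in $563412$ where the embedding at positions $\{1,2,5,6\}$ decomposes into two reduced critical embeddings $\{1,2,3,4\}$ and $\{3,4,5,6\}$, or when the associated reduced critical $3412$ has $A_1$, $A_2$, $B$ all nonempty and therefore generates two distinct type I components). For each such failure mode I would produce an explicit obstructing sub-pattern matching one of the 13 entries: patterns 1--4 correspond to $3412$ embeddings with a point in each of the four critical regions, patterns 5--9 handle the analogous $4231$ obstructions (an element in the middle region or in the forbidden regions flanking $i$ and $m$), and the six-letter patterns 10--13 encode the non-reduced and non-unique scenarios. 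Extremal choices, such as the obstructing extra index of minimum position and minimum value, pin down exactly which five- or six-letter pattern appears in each case.

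The converse direction is then a check that the 13 patterns are exhaustive: assuming none of them appears with the given dotted part, absence of each pattern translates to a specific emptiness condition, and together these force $Z$ to be a reduced critical $3412$ embedding with exactly one of $A$, $B$ empty (for $3412$) or a valid type I base with empty middle region (for $4231$), giving a unique element of $\ms(w)$. The main obstacle is the combinatorial bookkeeping: given the freedom in where the obstructing element lies, one must ensure that each possible obstruction reduces after flattening to exactly one of the listed patterns, with no omissions or double-counting. This is a finite but intricate case distinction; the key tool is the geometric picture of critical and forbidden regions used repeatedly in Section \ref{sect:lemmas}, applied here to extensions of $Z$ by at most two additional indices.
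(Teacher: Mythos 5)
The paper does not actually prove this lemma: it is stated as a citation to Billey and Warrington~\cite[Sec.\ 13]{BilWar}, and the authors simply invoke it as known. Your proposal therefore attempts a re-derivation rather than matching a proof the paper contains.

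As an outline, your plan captures the right high-level structure, and your reading of the first nine patterns checks out: patterns 1--4 do correspond to placing a single extra entry in each of the four critical regions of a $3412$ embedding, patterns 5--9 to single-entry obstructions of a $4231$ base, and patterns 10--13 to two-entry configurations (non-reduced critical $3412$s and configurations that generate two type I components). Your worked example $563412$ for the non-reduced case is correct. So the classification you sketch is consistent with the list.

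The genuine gap is that the proposal never leaves the planning stage. The entire mathematical content of the lemma is the claim that the listed thirteen patterns are \emph{exactly} the obstructions --- in particular, that the list is complete. You write ``I would identify all ways uniqueness can fail,'' ``I would produce an explicit obstructing sub-pattern,'' and ``the main obstacle is the combinatorial bookkeeping,'' but none of this is carried out. The converse direction (absence of all thirteen implies uniqueness) is dismissed as a ``finite but intricate case distinction,'' yet that case distinction \emph{is} the proof: one must show that every possible placement of one or two extra entries relative to a $4231$ or $3412$ base either causes no obstruction or flattens (after the extremal choice you gesture at) to one of the thirteen patterns, with nothing missed. Without enumerating those cases, the claim of exhaustiveness is unsupported, and the biconditional is not established. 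Additionally, your ``natural map'' from arbitrary $4231$/$3412$ embeddings $Z$ to elements of $\ms(w)$ needs the precise Billey--Warrington definition of ``corresponds to''; as written, the map is only partially defined, and the bookkeeping for when it is defined but not injective versus undefined is exactly what must be made precise before the pattern list can be derived.
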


\begin{remark} In contrast to Theorem~\ref{thm:pats}, it is interesting to
note that the set $\MS{2} = \{w \in \Sym_\infty \, : \, |\ms(w)| \geq
2\}$ is not characterized by pattern avoidance. For instance, the
permutation $x = 4631725$ has a maximal singular locus of size 2, so
$x \in \MS{2}$. However $x \prec w = 47318625$, but $w$ has maximal
singular locus of size 1.  
\end{remark}

\begin{lemma}
If $|\ms(w)| \geq k$, then there exists a pattern $v \prec w$ with at
most $4k$ entries such that $|\ms(v)| \geq k$. \label{thm:me}
\end{lemma}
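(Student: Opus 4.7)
The plan is to select, for each of the $k$ distinct elements $u_1,\ldots,u_k$ of $\ms(w)$, a $4$-index witness $Z_i\subset[n]$ with $\fl_{Z_i}(w)\in\{3412,4231\}$ which is not the dotted part of any useless pattern in~(\ref{eqn:useless}). Existence of such a non-useless witness per component is a consequence of the characterization of $\ms(w)$ by $4231$ and $3412$ embeddings in Section~\ref{sect:singlocusdesc} combined with the ``if and only if'' in Lemma~\ref{lem:num-sing-comps}. Having made this choice, set $Z=\bigcup_{i=1}^k Z_i$ and $v=\fl_Z(w)$; then $v\prec w$ and $|v|\leq 4k$, so only $|\ms(v)|\geq k$ remains to be checked.

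Writing $\tilde Z_i\subset[|Z|]$ for the image of $Z_i$ in $v$, each $\tilde Z_i$ still gives a $3412$ or $4231$ pattern. If some $\tilde Z_i$ were the dotted part of a useless pattern in $v$, the one or two extra indices from $[|Z|]\setminus\tilde Z_i$ required to form that useless pattern would lift to indices in $Z\setminus Z_i\subset[n]$, producing the same useless pattern in $w$ with $Z_i$ as its dotted part, contradicting the choice of $Z_i$. By Lemma~\ref{lem:num-sing-comps}, each $\tilde Z_i$ therefore determines a unique element $\tilde u_i\in\ms(v)$.

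The main obstacle is showing that $\tilde u_1,\ldots,\tilde u_k$ are pairwise distinct. I plan to prove a two-part naturality statement: (i) $\tilde u_i=\fl_Z(u_i)$; and (ii) for $j\ne i$, the permutation $u_j$ agrees with $w$ at every position of $Z_i$, so $\fl_{Z_i}(u_j)=\fl_{Z_i}(w)\in\{3412,4231\}$, whereas $\fl_{Z_i}(u_i)$ is a strictly Bruhat-smaller pattern at the same positions. Part (i) will follow from the rank-function description of the component attached to a non-useless witness together with the fact that rank-function restriction commutes with pattern flattening on index sets containing the witness. Part (ii) requires a case analysis by component type (I, IIA, IIB), showing that the Bruhat-lowering move producing $u_j$ from $w$ alters entries only inside the type~I or reduced critical $3412$ configuration supporting $Z_j$, and that this configuration meets $Z_i$ only at positions where the move is the identity. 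Combining (i) and (ii) yields $\fl_Z(u_i)\ne\fl_Z(u_j)$, hence $\tilde u_i\ne\tilde u_j$, completing the proof.
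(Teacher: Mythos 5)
Your construction of $Z$ and $v$, and your argument that each $\tilde Z_i$ remains a non-useless $4231$/$3412$ occurrence in $v$, is exactly the paper's argument. Where you diverge is in treating the distinctness of the resulting elements $\tilde u_1,\ldots,\tilde u_k\in\ms(v)$ as an open problem requiring a separate naturality/case-analysis argument. The paper gets this for free: the point of Lemma~\ref{lem:num-sing-comps} (drawn from [BilWar, Sec.~13]) is that the assignment from non-useless $4231$/$3412$ occurrences to elements of $\ms$ is a bijection, not merely a surjection, so distinct witnesses $\tilde Z_i\neq\tilde Z_j$ necessarily determine distinct components of $\ms(v)$. The paper's one-paragraph proof stops as soon as it has verified that each $Z_i$ stays non-useless in $v$.

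As for the extra machinery you propose: it is a plan rather than a proof, and step (ii) in particular looks shaky. You assert that for $j\ne i$ the permutation $u_j$ agrees with $w$ at every position of $Z_i$, on the grounds that the Bruhat-lowering move producing $u_j$ ``alters entries only inside'' the configuration for $Z_j$ and ``meets $Z_i$ only at positions where the move is the identity.'' This needs proof and is not obviously true: configurations attached to different components can share indices (see the discussion after Theorem~\ref{thm:singlocus}, e.g.\ type~I components sharing indices with the critical $3412$ embedding), and nothing prevents the lowering move for $u_j$ from changing an entry in $Z_i$. Likewise (i), $\tilde u_i=\fl_Z(u_i)$, is a commutation-with-flattening claim that requires verifying the explicit formula for the singular permutation of a witness restricts well to $Z$; you cite no such verification. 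If you want to make your version of the argument rigorous, you would do better to go back to [BilWar, Sec.~13] and extract the injectivity statement directly rather than rebuild it from scratch.
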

\begin{proof}
For each element $x_i$ of $\ms(w)$, let $Z_i$ be the indices of $w$ such that $fl_{Z_i}(w)$ is the $4231$ or $3412$ pattern corresponding to $x_i$, and let $Z$ be the union of $Z_1, Z_2, \ldots, Z_k$. Then $|Z| \leq 4k$, since each element of $\ms(w)$ adds at most 4 indices to $Z$, so $\fl_Z(w)$ has at most $4k$ entries. Let $v = \fl_Z(w)$.

Given $x_i \in \ms(w)$, $\fl_{Z_i}(w)$ is a $4231$ or $3412$ pattern which is not a subpattern of one of the dotted patterns in (\ref{eqn:useless}). Since $Z_i \subset Z$, $\fl_{Z_i}(v) = \fl_{Z_i}(\fl_{Z}(w)) = \fl_{Z_i}(w)$, so $Z_i$ is a $4231$ or $3412$ pattern in $v$ as well. Furthermore, it cannot be a subpattern of one of the patterns in (\ref{eqn:useless}), since then it would be a subpattern of that pattern in $w$ as well. Hence $Z_i$ corresponds to a unique element of $\ms(v)$. So $|\ms(v)| \geq |\ms(w)| \geq k$, as desired.
\end{proof}

\begin{proof}[Proof of Theorem~\ref{thm:pats}] By Theorem~\ref{thm:main}, $\KL{2}$ is the set of
permutations which have at most 1 elements in the maximal singular
locus and avoid
\begin{equation}
\{653421, 632541, 463152, 526413, 546213, 465132\}.  \label{eqn:6pats}
\end{equation}
If $w \not \in \KL{2}$, then either it contains a pattern in 
\eqref{eqn:6pats} or it has at least two elements in its maximal
singular locus. The patterns of \eqref{eqn:6pats} are in
\eqref{eqn:66pats}.  We claim that any $w \in \Sym_\infty$ with
$|\ms(w)| \geq 2$ contains a pattern in \eqref{eqn:66pats}.
Therefore, $w \not \in \KL{2}$ contains a pattern from
\eqref{eqn:66pats}.

To prove the claim, note by Lemma~\ref{thm:me} that there exists $v
\in \Sym_{\leq 8}$ such that $v \prec w$ and $|\ms(v)| \geq 2$. A
computer check establishes that \eqref{eqn:66pats} contains all
the minimal patterns in $\Sym_{\leq 8}$ not in $\KL{2}$, hence $w$
contains one such pattern. 

Conversely, if $w \in S_{\infty}$ contains a pattern $v$ in
\eqref{eqn:66pats}, then a computer verification shows that
$P_{id, v}(1) > 2$ so by \cite{BilBra}, $P_{id, w}(1) > 2$.
Hence, $w$ is not in $\KL{2}$.  Therefore, the patterns of
\eqref{eqn:66pats} characterize $\KL{2}$, as desired.
\end{proof}

The structure of $KL_{m}$ for $m \geq 1$ gives a pattern avoidance
``filtration'' on $S_{\infty}$.  This suggests the following
questions.   

\begin{enumerate}
\item Can $KL_{m}$ always be characterized by a finite number of patterns?

\item If so, can the minimal elements of the complement of $KL_{m}$ be
determined efficiently?

\item We know the maximal singular locus is efficient to calculate.  Can we
use information about $\ms (w)$ to give bounds for $P_{id,w}(1)$?
\end{enumerate}

The following conjecture has been tested through $S_{8}$.  

\begin{conjecture}  Let $w \in S_{n}$. 
\begin{enumerate}
\item If $P_{id, w}(1) \leq 3$ then $|\ms(w)| \leq 3$.
\item If $P_{id, w}(1) =3$ and  $|\ms(w)| =1$ then $P_{id,w}=1+q^{a}+q^{b}$. 
\item If $P_{id, w}(1) =3$ and  $|\ms(w)| =2$ then $P_{id,w}=1+q^{a}+q^{b}$.
\item If $P_{id, w}(1) =3$ and  $|\ms(w)| =3$ then $P_{id,w}=1+2q^{a}$.  
\end{enumerate}
\end{conjecture}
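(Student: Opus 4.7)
The plan for part (1) is to adapt the strategy of Theorem~\ref{thm:pats} from the appendix, scaled up to $k=3$. By a straightforward generalization of Lemma~\ref{thm:me}, any $w$ with $|\ms(w)| \geq 4$ contains a pattern $v$ with at most $16$ entries and $|\ms(v)| \geq 4$, so there are only finitely many minimal ``obstruction'' patterns to enumerate. A direct Kazhdan-Lusztig computation on each should yield $P_{id,v}(1) \geq 4$, and combined with \cite[Thm.~1]{BilBra} (monotonicity of $P_{id,w}(1)$ under pattern containment) this establishes the contrapositive of part (1). The principal obstacle here is computational: Kazhdan-Lusztig calculations up to $S_{16}$ are substantially more expensive than the $S_8$ search in the appendix, so one probably needs a sharper pattern-size bound (replacing the factor of $4$ in Lemma~\ref{thm:me} by something smaller, or ruling out large patterns via theoretical arguments) to render the search feasible.

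For parts (2)--(4), my approach is to extend the resolution-of-singularities methods of Sections~\ref{sect:covex} and~\ref{sect:not-covex}. In each case, the goal is to construct a resolution $\pi_2: Z \to X_w$ whose fiber at $e_{id}$ has total Betti number $3$ and whose exceptional locus has manageable combinatorics, so that Polo's application of the Decomposition Theorem yields $P_{id,w}(q)$ in the predicted form. In part (2), with $|\ms(w)| = 1$, one should be able to reuse a Cortez-Zelevinsky-type resolution (for a type I component) or the resolution of Section~\ref{sect:not-covex} (for a type II component), but with pattern-avoidance hypotheses weaker than those of Theorem~\ref{thm:main}; the analogues of Lemmas~\ref{lem:covexfibub}--\ref{lem:covexfiblb} and \ref{lem:fiblb}--\ref{lem:fibub} would need to be reworked so that $\dim(E_{\delta'-1}+E_M)$ and $\dim(E_{\alpha'}\cap E_N)$ are allowed to differ from the ``optimal'' values by one, producing the extra Betti number. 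In parts (3) and (4), with two or three components, one natural strategy is an iterated resolution addressing the components one at a time; alternatively, a single composite Bott-Samelson-type resolution that simultaneously resolves all components, together with the Bialynicki-Birula combinatorialization mentioned in Remark~\ref{rem:comb-calc}, could be used to compute the fiber Poincar\'e polynomials.

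The main obstacle is part (4), where $P_{id,w}$ is pinned down to the very constrained form $1+2q^a$. The coefficient $2$ on a single power forces a rigid symmetry: two of the three singular components must contribute the same exponent, and the $E_v(q)$ terms in the Decomposition Theorem must combine with the fiber cohomology in just the right way to produce $1+2q^a$ rather than $1+q^a+q^b$. One way to pin this down is first to classify (via Theorem~\ref{thm:singlocus}) the combinatorial configurations compatible with $|\ms(w)|=3$ under the pattern restrictions implied by $P_{id,w}(1)=3$, and then to verify the conjecture configuration-by-configuration, hopefully producing a natural involution pairing two of the three components. A secondary difficulty is that, unlike the $|\ms(w)|=1$ case analyzed here, there is no canonical $3412$ or $4231$ embedding around which to center the resolution; a new combinatorial invariant generalizing the ``height of a $3412$ embedding'' and refined to all three components will likely be needed to describe the exponent $a$, and verifying its well-definedness and symmetry may be the most delicate step of the entire proof.
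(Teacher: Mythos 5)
This statement is not a theorem in the paper: it is an open conjecture, which the authors state has been \emph{verified computationally only through} $S_8$. The paper offers no proof, so there is nothing to compare your attempt against. Your write-up is candid that it is a research program rather than a proof, and it should be evaluated as such: none of the four parts is actually established.

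A few concrete remarks on the plan itself. For part (1), the reduction via a $k=4$ version of Lemma~\ref{thm:me} is sound — if $|\ms(w)| \geq 4$ then some $v \prec w$ in $S_{\leq 16}$ has $|\ms(v)| \geq 4$ — and then monotonicity of $P_{id,\cdot}(1)$ under pattern containment (from \cite{BilBra}) would finish the contrapositive once the finite check is done. But the finite check is exactly the content to be proved and has not been carried out; the paper's own verification stops at $S_8$. For parts (2)--(4), there is a conceptual misstep: you ask for a resolution whose fiber at $e_{id}$ has \emph{total Betti number} $3$, but that is neither achievable in general nor what Polo's method requires. In the paper's own argument the fiber at $e_{id}$ is $\mathbb{P}^h$, whose Poincar\'e polynomial $1+q+\cdots+q^h$ has $h+1$ terms, yet $P_{id,w}(1)=2$; the surplus is cancelled by the $q^{\ell(w)-\ell(v)}E_v(q)P_{id,v}(q)$ terms coming from the exceptional locus. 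The right target is a resolution with a tractable exceptional locus, not a small one. More seriously, both Cortez-type resolutions used in the paper are organized around the unique non-inclusion coessential element (covexillary case) or the unique reduced critical $3412$ embedding (non-covexillary case); when $|\ms(w)| \geq 2$ there is no such distinguished datum, so the construction of the resolution — not just the analysis of its fibers — would need to be rethought from scratch. Your proposal flags this for part (4) but it is already the central obstruction in parts (2) and (3).
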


The following conjecture has been tested for $B_{5}$, $C_{5}$, and
$D_{5}$.

\begin{conjecture}
For other Weyl group types, $P_{id,w}(1)=2$ implies $|\ms (w)|=1$.
\end{conjecture}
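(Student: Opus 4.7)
The plan is to generalize to arbitrary Weyl groups the argument that, as mentioned in the introduction, proves in type $A$ that $P_{id,w}(1)\leq 2$ forces at most one maximal singular component. The Braden-MacPherson moment-graph formalism \cite{BraMac} is valid for any Coxeter group, so the structural setup carries over verbatim; what needs generalization is the numerical inequality $P_{id,w}(1)\geq |\ms(w)|$.

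First I would try to establish this inequality directly on the moment graph of $X_w$. The BM canonical sheaf has stalks whose graded dimensions compute $P_{v,w}(q)$, and the sheaf axioms force the stalk at $v$ to have rank $\geq 2$ whenever $X_w$ is not rationally smooth along $X_v^\circ$; in particular this holds for each $v\in\ms(w)$. I would then try to propagate these local rank inequalities down to the identity via the BM recursion along Bruhat covers, aiming to show that distinct maximal singular elements contribute to disjoint graded pieces of the stalk at $id$, so that ranks accumulate additively and yield $P_{id,w}(1)\geq |\ms(w)|$.

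The potential failure mode is cancellation in the BM recursion when two maximal singular elements share substantial Bruhat history. In type $A$ this is obstructed by the combinatorial description of $\ms(w)$ from Theorem~\ref{thm:singlocus} together with the Billey-Braden pattern theorem \cite{BilBra}. Lacking such a combinatorial description in other types, the natural alternative is a reduction-to-parabolic approach: by a hypothetical general analog of the Billey-Braden pattern theorem, if $|\ms(w)|\geq 2$ then $w$ contains some parabolic pattern $v$ of bounded rank with $|\ms(v)|\geq 2$, at which point a finite computer check in that bounded rank (in the spirit of the Billey-Weed appendix, and consistent with the verifications for $B_5$, $C_5$, and $D_5$) would give $P_{id,v}(1)\geq 3$ and hence $P_{id,w}(1)\geq 3$.

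The main obstacle is the bounded-rank reduction: one needs an analog of Lemma~\ref{thm:me}, i.e., a uniform bound on the rank of a parabolic subsystem witnessing $|\ms|\geq 2$, together with a general Billey-Braden pattern theorem for arbitrary Weyl groups. Producing such a uniform bound, and proving it either sheaf-theoretically or via a case analysis of low-rank subsystems, is what makes this conjecture substantive; once available, it reduces the conjecture to a finite-rank verification per classical Weyl group family, plus direct checks in the exceptional types.
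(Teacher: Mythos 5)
This statement is one of the open conjectures at the end of the Billey--Weed appendix; the paper offers no proof, only the remark that it ``has been tested for $B_{5}$, $C_{5}$, and $D_{5}$.'' There is therefore no paper proof to compare against, and your write-up is correctly framed as a research sketch rather than a proof: you yourself flag the missing ingredients (a Billey--Braden--style pattern lower bound valid for arbitrary Weyl groups, a bounded-rank reduction analogous to Lemma~\ref{thm:me}, and a combinatorial description of $\ms(w)$ outside type $A$). Those are indeed the genuine obstacles, and until they are supplied nothing here constitutes a proof of the conjecture.

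Two smaller technical cautions on the plan itself. First, the inequality you aim for, $P_{id,w}(1)\geq |\ms(w)|$, is not quite strong enough: when $|\ms(w)|\geq 2$ it only gives $P_{id,w}(1)\geq 2$, whereas the conjecture requires ruling out $P_{id,w}(1)=2$. You would need something like $P_{id,w}(1)\geq |\ms(w)|+1$, i.e.\ a guaranteed $1$ from the constant term plus at least one extra unit per component of $\ms(w)$; the ``disjoint graded pieces'' heuristic should be stated to that effect. Second, outside the simply-laced types the equivalence between $P_{id,w}=1$ and smoothness of $X_w$ fails (rational smoothness is the correct dichotomy), so any moment-graph argument that tries to detect components of the genuine singular locus --- as opposed to the rationally singular locus --- from stalk ranks of the Braden--MacPherson sheaf has to be handled carefully; the BM stalks see rational smoothness only. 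This is exactly the kind of type-dependent subtlety that presumably made the authors leave the statement as a conjecture.
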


\end{document}